\def\bfB{\mathbf{B}}
\def\bfC{\mathbf{C}}
\newcommand{\Hom}{\operatorname{Hom}}
\newcommand{\Mat}{\operatorname{M}}
\newcommand{\Mats}{\operatorname{S}}
\newcommand{\Mata}{\operatorname{A}}
\newcommand{\Ker}{\operatorname{Ker}}
\newcommand{\Vect}{\operatorname{span}}
\newcommand{\im}{\operatorname{Im}}
\newcommand{\tr}{\operatorname{tr}}
\newcommand{\codim}{\operatorname{codim}}
\renewcommand{\setminus}{\smallsetminus}
\newcommand{\modu}{\operatorname{mod}}
\def\K{\mathbb{K}}
\def\calA{\mathcal{A}}
\def\calB{\mathcal{B}}
\def\calH{\mathcal{H}}
\def\calL{\mathcal{L}}
\def\calM{\mathcal{M}}
\def\calN{\mathcal{N}}
\def\calS{\mathcal{S}}
\def\calT{\mathcal{T}}
\def\calU{\mathcal{U}}
\def\lcro{\mathopen{[\![}}
\def\rcro{\mathclose{]\!]}}
\theoremstyle{definition}
\newtheorem{Def}{Definition}[section]
\newtheorem{Not}[Def]{Notation}
\theoremstyle{plain}
\newtheorem{theo}{Theorem}[section]
\newtheorem{prop}[theo]{Proposition}
\newtheorem{cor}[theo]{Corollary}
\newtheorem{lemma}[theo]{Lemma}
\newtheorem{claim}{Claim}
\newtheorem{step}{Step}
\theoremstyle{plain}
\theoremstyle{remark}
\newtheorem{Rems}{Remarks}
\newtheorem{Rem}[Rems]{Remark}
\newtheorem{ex}[Rems]{Example}
\title{Range-compatible homomorphisms on spaces of symmetric or alternating matrices}
\author{Cl\'ement de Seguins Pazzis\footnote{Universit\'e de Versailles Saint-Quentin-en-Yvelines, Laboratoire de Math\'ematiques
de Versailles, 45 avenue des Etats-Unis, 78035 Versailles cedex, France}
\footnote{e-mail address: dsp.prof@gmail.com}}
\begin{document}

% Relecture 4/4 accomplie.

\thispagestyle{plain}

\maketitle

\begin{abstract}
Let $U$ and $V$ be finite-dimensional vector spaces over an arbitrary field $\K$, and
$\calS$ be a linear subspace of the space $\calL(U,V)$ of all linear maps from $U$ to $V$.
A map $F : \calS \rightarrow V$ is called range-compatible when it satisfies
$F(s) \in \im s$ for all $s \in \calS$. Among the range-compatible maps are the so-called local ones, that is the maps of the form
$s \mapsto s(x)$ for a fixed vector $x$ of $U$.

In recent works, we have classified the range-compatible group homomorphisms on $\calS$ when
the codimension of $\calS$ in $\calL(U,V)$ is small.
In the present article, we study the special case when $\calS$ is a linear subspace of the space $\Mats_n(\K)$ of all $n$ by $n$ symmetric matrices:
we prove that if the codimension of $\calS$ in $\Mats_n(\K)$ is less than or equal to $n-2$, then every range-compatible homomorphism on $\calS$
is local provided that $\K$ does not have characteristic $2$. With the same assumption on the codimension of $\calS$, we also classify
the range-compatible homomorphisms on $\calS$ when $\K$ has characteristic $2$. Finally, we prove that
if $\calS$ is a linear subspace of the space $\Mata_n(\K)$ of all $n$ by $n$ alternating matrices with entries in $\K$,
and the codimension of $\calS$ is less than or equal to $n-3$, then every range-compatible homomorphism on $\calS$ is local.
\end{abstract}

\vskip 2mm
\noindent
\emph{AMS Classification:} 15A03, 15A30

\vskip 2mm
\noindent
\emph{Keywords:} Range-compatible map, operator space,
symmetric matrices, alternating matrices.

\section{Introduction}

\subsection{Notation}

Throughout the article, all the vector spaces are assumed to be finite-dimensional
and over a given arbitrary field $\K$.
Given vector spaces $U$ and $V$, we denote by $\calL(U,V)$ the space of all linear maps from $U$ to $V$.
Given abelian groups $A$ and $B$, we denote by $\Hom(A,B)$ the group of all  homomorphisms from $A$ to $B$.
Group homomorphisms will be simply called homomorphisms in this article.

Given non-negative integers $n$ and $p$, we denote by $\Mat_{n,p}(\K)$ the space of all matrices with $n$ rows, $p$ columns and entries in $\K$.
For $(i,j)\in \lcro 1,n\rcro \times \lcro 1,p\rcro$, we denote by $E_{i,j}$ the matrix of $\Mat_{n,p}(\K)$
in which all the entries equal zero except the one at the $(i,j)$-spot, which equals $1$.
Given finite-dimensional vector spaces $U$ and $V$, equipped with respective bases $\bfB$ and $\bfC$,
and given a linear map $u : U \rightarrow V$, we denote by $\Mat_{\bfB,\bfC}(u)$ the matrix of $u$ in the bases $\bfB$ and $\bfC$.

We set $\Mat_n(\K):=\Mat_{n,n}(\K)$ and we denote by $\Mats_n(\K)$ the subspace of $\Mat_n(\K)$ consisting of its symmetric matrices,
and by $\Mata_n(\K)$ the subspace of all $n$ by $n$ alternating matrices with entries in $\K$.
Recall that a square matrix $M \in \Mat_n(\K)$ is called alternating whenever $X^T M X=0$ for all $X \in \K^n$, which is equivalent
to having $M$ skew-symmetric with all diagonal entries zero.

Given a square matrix $M=(m_{i,j}) \in \Mat_n(\K)$, we denote its diagonal vector by
$$\Delta(M):=\begin{bmatrix}
m_{1,1} \\
\vdots \\
m_{n,n}
\end{bmatrix}\in \K^n.$$
Given a vector $X=(x_i)_{1 \leq i \leq n} \in \K^n$ and a map $\alpha : \K \rightarrow \K$, we
denote by $X^\alpha:=(\alpha(x_i))_{1 \leq i \leq n}$ the vector of $\K^n$ deduced from $X$ by applying $\alpha$ entry-wise.

Let $m,n,p,q$ be non-negative integers. Given respective subsets $\calA$ and $\calB$ of $\Mat_{m,n}(\K)$ and $\Mat_{p,q}(\K)$, one
sets
$$\calA \oplus \calB:=\biggl\{\begin{bmatrix}
A & 0 \\
0 & B
\end{bmatrix} \mid A \in \calA, \; B \in \calB\biggr\},$$
which is a subset of $\Mat_{m+p,n+q}(\K)$.
Note that if $\calA$ and $\calB$ are respective linear subspaces of $\Mats_n(\K)$ and $\Mats_p(\K)$ (respectively, of $\Mata_n(\K)$ and $\Mata_p(\K)$),
then $\calA \oplus \calB$ is a linear subspace of $\Mats_{n+p}(\K)$ (respectively, of $\Mata_{n+p}(\K)$)
with dimension $\dim \calA+\dim \calB$.

\subsection{The problem}

Let $U$ and $V$ be vector spaces, and $\calS$ be a subset of $\calL(U,V)$.
A map $F : \calS \rightarrow V$ is called \textbf{range-compatible} whenever
$$\forall s \in \calS, \; F(s) \in \im s.$$
It is called \textbf{local} when there exists a vector $x \in U$ such that
$$\forall s \in \calS, \; F(s)=s(x).$$
Clearly, every local map is range-compatible, but the converse is not true, even for linear maps.

\vskip 3mm
The study of range-compatible maps is a very recent topic in linear algebra.
It is mainly motivated by its connection with spaces of bounded rank matrices and
from the fact that it is a reformulation of the concept of algebraic reflexivity.
The basic result in the theory of range-compatible maps is the following one.

\begin{theo}\label{totalspace}
Every range-compatible linear map on $\calL(U,V)$ is local.
\end{theo}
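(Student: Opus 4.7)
The plan is to prove this by direct computation in coordinates. Fix bases $\bfB = (e_1,\ldots,e_n)$ of $U$ and $\bfC = (f_1,\ldots,f_m)$ of $V$, and identify $\calL(U,V)$ with $\Mat_{m,n}(\K)$ via the map $u\mapsto \Mat_{\bfB,\bfC}(u)$. The linear map $F$ is then determined by its values on the elementary matrices $E_{i,j}$, and since the image of $E_{i,j}$ is the single line $\K f_i$, range-compatibility forces
\[ F(E_{i,j}) = \alpha_{i,j}\, f_i \]
for a uniquely determined scalar $\alpha_{i,j}\in \K$. The problem is thereby reduced to understanding the array $(\alpha_{i,j})$.

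The key step is to show that $\alpha_{i,j}$ does not depend on $i$. For this, I fix a column index $j$ and two distinct row indices $i\neq k$, and evaluate $F$ on the rank-one matrix $E_{i,j}+E_{k,j}$, whose image is the line $\K(f_i+f_k)$. Range-compatibility yields $F(E_{i,j}+E_{k,j}) = \gamma\,(f_i+f_k)$ for some $\gamma\in \K$, while additivity yields $F(E_{i,j}+E_{k,j})=\alpha_{i,j}\, f_i+\alpha_{k,j}\, f_k$. Comparing coefficients in the basis $\bfC$ forces $\alpha_{i,j}=\alpha_{k,j}=\gamma$; call this common value $x_j$. (If $m=1$ there is nothing to show and one simply sets $x_j:=\alpha_{1,j}$.)

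To finish, set $x:=\sum_{j=1}^n x_j\, e_j\in U$. For every pair $(i,j)$ one has $E_{i,j}(x)=x_j\, f_i=F(E_{i,j})$, so $F$ and the linear map $s\mapsto s(x)$ agree on the basis $(E_{i,j})$ of $\calL(U,V)$, hence everywhere by linearity. This exhibits $F$ as a local map with witness $x$. No real obstacle arises: the statement is a warm-up, and the only non-trivial ingredient is the idea of comparing rank-one maps whose images lie on a common line with those whose images are distinct lines, an idea that prefigures the more refined extraction techniques needed in the symmetric and alternating settings treated later in the paper.
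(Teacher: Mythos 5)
Your proof is correct and complete: the reduction to the matrices $E_{i,j}$, the comparison of $F(E_{i,j}+E_{k,j})$ with $\alpha_{i,j}f_i+\alpha_{k,j}f_k$ along the line $\K(f_i+f_k)$, and the final verification on a basis are all sound, and you handle the degenerate case $m=1$. Note that the paper does not actually prove Theorem \ref{totalspace}; it quotes it as a known result (implicit in Dieudonn\'e's work), so there is no in-text argument to compare against --- your coordinate computation is the standard self-contained proof. One small observation: everything except the last sentence uses only additivity of $F$, and it is precisely the final step (extending agreement on the basis $(E_{i,j})$ to all of $\calL(U,V)$) that genuinely requires linearity; this is why the group-homomorphism analogue (Theorem \ref{maintheogroup}) is a substantially harder statement requiring the codimension hypothesis.
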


This result was independently discovered by several authors (it is implicit in \cite{Dieudonne}, for instance).
In \cite{dSPclass}, it was extended as follows.

\begin{theo}\label{maintheolin1}
Let $\calS$ be a linear subspace of $\calL(U,V)$ such that $\codim \calS \leq \dim V-2$.
Then, every range-compatible linear map on $\calS$ is local.
\end{theo}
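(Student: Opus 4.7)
My plan is to argue by induction on the codimension $c := \codim \calS$. The base case $c = 0$ is exactly Theorem \ref{totalspace}. For the inductive step with $1 \leq c \leq \dim V - 2$, I aim to show that any range-compatible linear map $F : \calS \to V$ extends to a range-compatible linear map on a superspace $\calS' \supsetneq \calS$ of dimension $\dim \calS + 1$. Iterating such extensions until $\calS' = \calL(U,V)$ reduces the problem to the base case, since a local map on any superspace restricts to a local map on $\calS$.

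To build the extension, pick $u_0 \in \calL(U,V) \setminus \calS$ and set $\calS' := \calS \oplus \K u_0$. The linear extension $\widetilde F$ is determined by the value $v_0 := \widetilde F(u_0) \in V$, and a direct computation shows that $\widetilde F$ is range-compatible on $\calS'$ if and only if $v_0 \in \im u_0$ and $F(s) + v_0 \in \im(s + u_0)$ for every $s \in \calS$ (the remaining scalings of $\lambda$ follow from the linearity of $F$ and the homogeneity of the image). A natural first try is $u_0$ of rank one, $u_0 = y \otimes \phi^*$ with $y \in V \setminus \{0\}$ and $\phi^* \in U^*$, so that $\im u_0 = \K y$ and $v_0 = \mu y$ for some scalar $\mu$; in coordinates where $\phi^*$ is the first coordinate form on $U$, the constraint then amounts to requiring that $F(s)$ have fixed ``first column coefficient'' $\mu$ in the column decomposition of $s$, exactly matching the local shape $F(s) = s(x)$ with $x_1 = \mu$.

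The main obstacle is the existence of a suitable triple $(y, \phi^*, \mu)$ satisfying the constraint for every $s \in \calS$ while keeping $y \otimes \phi^* \notin \calS$. This is precisely where the hypothesis $c \leq \dim V - 2$ should enter: $V$ carries at least two extra dimensions beyond what $\calS^\perp$-type obstructions impose, providing enough slack to pick $y$ generically outside a controlled set of bad directions. I expect the technical heart of the argument to be a dimension count on the rank-one variety of $\calL(U,V) \setminus \calS$, combined with a case analysis on whether $y$ lies in $\im s$ for each given $s \in \calS$, to synthesize a globally compatible choice and thereby lift $F$ one dimension at a time until Theorem \ref{totalspace} applies.
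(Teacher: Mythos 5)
There is a genuine gap: the entire content of the theorem is concentrated in your extension step, and you do not prove it --- you only state that you \emph{expect} a dimension count to produce a suitable triple $(y,\phi^*,\mu)$. Concretely, you must exhibit a rank-one map $u_0=y\otimes\phi^*\notin\calS$ and a scalar $\mu$ such that $F(s)+\mu y\in\im(s+y\otimes\phi^*)$ for \emph{every} $s\in\calS$, and no argument is given that such a choice exists. Worse, the reduction you offer --- that the constraint ``amounts to requiring that $F(s)$ have fixed first column coefficient $\mu$'' --- presupposes that $F(s)$ decomposes as a linear combination of the columns of $s$ with well-defined coefficients, which is essentially the locality you are trying to prove (and such a coefficient is not even well-defined when the columns of $s$ are linearly dependent). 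Note also that range-compatible maps on a subspace need not extend to range-compatible maps on a larger space in general (the paper's counterexample on $\Mats_1(\K)\oplus\Mats_{n-1}(\K)$ illustrates the phenomenon for homomorphisms), so the existence of your extension cannot be taken for granted: in the relevant codimension range it is equivalent to the theorem itself, and your argument is therefore circular until the existence lemma is supplied.

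For comparison, this theorem is quoted from \cite{dSPclass} and is not reproved here; the method used there, and throughout the present paper (see the proofs of Theorems \ref{generalizedsymtheo} and \ref{generalizedalttheo}), is in a sense dual to yours: instead of enlarging the source operator space one dimension at a time, one shrinks the target by modding out well-chosen vectors $y\in V$, proves by induction on $\dim V$ that $F\modu y$ is local for two non-collinear choices of $y$, and then glues the two conclusions. The delicate point there --- finding vectors $y$ for which $\codim(\calS\modu y)\leq \dim(V/\K y)-2$ still holds --- is handled by structural lemmas analogous to Lemmas \ref{rank1lemma} and \ref{existtwovectorssym}. To salvage your route you would need an existence lemma of comparable strength for the pair $(u_0,v_0)$, and that is precisely what is missing.
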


Then, in \cite{dSPRC1}, this result was also extended to group homomorphisms, for which the upper-bound $\dim V-2$
turns out to be optimal\footnote{The upper-bound $\dim V-2$ is not optimal for linear maps, see Theorem 1.2 of \cite{dSPRC1}
for the optimal bound.}.

\begin{theo}\label{maintheogroup}
Let $\calS$ be a linear subspace of $\calL(U,V)$ such that $\codim \calS \leq \dim V-2$.
Then, every range-compatible homomorphism on $\calS$ is local.
\end{theo}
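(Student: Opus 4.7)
The plan is induction on $c := \codim \calS$, with base case $c=0$ (where $\calS = \calL(U,V)$) and an inductive step that enlarges $\calS$ by a single direction.

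For the base case, I would analyze $F$ on rank-one operators: range-compatibility forces $F(x \otimes \phi) = \mu(x,\phi)\,x$ for some scalar $\mu(x,\phi) \in \K$, for every nonzero $x \in V$ and $\phi \in U^*$. Writing $F$ additively on the various decompositions of rank-one sums---using $x \otimes (\phi_1+\phi_2) = x\otimes \phi_1 + x \otimes \phi_2$, the analogous identity in the first slot, and the cross identity $(\lambda x)\otimes \phi = x \otimes (\lambda \phi)$---one successively shows that $\mu(x,\phi)$ is independent of $x$, that the resulting $\phi \mapsto \mu(\phi)$ is additive, and finally that it is $\K$-linear. Hence $\mu(\phi)=\phi(z)$ for some fixed $z\in U$, and since rank-one maps span $\calL(U,V)$, additivity of $F$ yields $F(s)=s(z)$ everywhere.

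For the inductive step, let $\calS$ have codimension $c$ with $1 \le c \le \dim V - 2$ and pick $t \in \calL(U,V)\setminus \calS$, so that $\calS' := \calS + \K t$ has codimension $c-1$. I would extend $F$ to a range-compatible homomorphism $\widetilde F$ on $\calS'$: setting $\widetilde F(s+\lambda t):=F(s)+g(\lambda)$, the task becomes to find an additive map $g : \K \to \im t$ such that $F(s)+g(\lambda) \in \im(s+\lambda t)$ for every $s \in \calS$ and every $\lambda \in \K$. By the inductive hypothesis applied to $\calS'$ (whose codimension is at most $\dim V - 2$), $\widetilde F$ is local, whence $F$ is local too.

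The principal difficulty is the construction of $g$. For perturbations $s+\lambda t$ of full rank the constraint $F(s)+g(\lambda) \in \im(s+\lambda t)$ is vacuous, so the real work is the control of rank-deficient $s+\lambda t$. The bound $\codim \calS \le \dim V - 2$ should supply enough full-rank and rank-$(\dim V -1)$ elements in $\calS$ to force $g$ to be essentially unique on a generating subset of $\K$ and to guarantee that no inconsistency arises between the constraints. I expect the cleanest execution to pass to a matrix model (fix bases and work in $\Mat_{n,p}(\K)$ with $n=\dim V$), following the outline of the proof of Theorem~\ref{maintheolin1}; the difference, and the genuinely new content of the homomorphism case, is that $g$ is required only to be additive, not $\K$-linear.
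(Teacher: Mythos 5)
Your base case is fine: on the full space $\calL(U,V)$ with $\dim V\geq 2$, the analysis of rank-one operators $x\otimes\phi$ via the identities $x\otimes(\phi_1+\phi_2)=x\otimes\phi_1+x\otimes\phi_2$, $(x_1+x_2)\otimes\phi=x_1\otimes\phi+x_2\otimes\phi$ and $(\lambda x)\otimes\phi=x\otimes(\lambda\phi)$ does force $\mu$ to be a linear form $\phi\mapsto\phi(z)$, and since every operator is a finite sum of rank-one operators, additivity propagates $F(s)=s(z)$ to all of $\calL(U,V)$. The genuine gap is the inductive step. Extending $F$ to $\calS'=\calS+\K t$ requires, for each $\lambda$, a vector $g(\lambda)$ lying in $\bigcap_{s\in\calS}\bigl(\im(s+\lambda t)-F(s)\bigr)$, an intersection of infinitely many cosets of the various images, and you give no argument that it is nonempty. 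This is not a technical detail to be filled in: if the theorem holds then $F(s)=s(z)$ and $g(\lambda)=\lambda\,t(z)$ works, and conversely if the extension always exists you iterate up to the full space and invoke the base case. So the existence of the extension is essentially \emph{equivalent} to the conclusion, and your plan is circular in spirit. Moreover, the hypothesis $\codim\calS\leq\dim V-2$ never bites quantitatively in your sketch, yet the theorem fails at codimension $\dim V-1$ (e.g.\ $\calS=\{M\in\Mat_{n,p}(\K): m_{i,1}=0 \text{ for } i\geq 2\}$ with $F(M)=\varphi(m_{1,1})\,e_1$ for a non-linear additive $\varphi$), so any correct argument must make that bound do real work exactly at the step you leave open.

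For comparison, the proof in \cite{dSPRC1} (and the strategy this paper mirrors in Sections \ref{RCsym} and \ref{RCalt}) runs the induction in the opposite direction: on $\dim V$ rather than on the codimension. One shows that there exist two non-collinear vectors $y_1,y_2\in V$ with $\codim(\calS\modu y_i)\leq\dim(V/\K y_i)-2$, applies the induction hypothesis to conclude that $F\modu y_1$ and $F\modu y_2$ are local, and then proves a lemma (the analogue of Lemma \ref{twovectorssymlemma} here) showing that locality of two such quotient maps forces locality of $F$. That route avoids any extension problem: quotienting a range-compatible homomorphism is always possible (Lemma \ref{quotientgeneral}), whereas extending one is precisely the hard part. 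I would either adopt that quotient strategy or supply a genuine construction of $g$ that uses the codimension bound.
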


Theorem \ref{maintheolin1} is a major key in a generalization \cite{dSPclass} of an important result of Atkinson and Lloyd \cite{AtkLloyd} on the classification of large spaces of matrices with rank bounded above, and it was also used in a study of invertibility preservers on large
spaces of square matrices \cite{dSPlargelinpres}. Further work on range-compatibility has appeared recently: we refer to \cite{dSPRC1,dSPRC2,dSPRC3}
for the latest developments of this topic.

In this article, we shall focus on the special case when $\calS$ is a linear subspace of $\Mats_n(\K)$
or $\Mata_n(\K)$. Of course, $\calS$ is then naturally identified with a linear subspace of $\calL(\K^n,\K^n)$
through the canonical basis of $\K^n$. The main motivation for considering such subspaces is the potential application to
the study of large spaces of symmetric or alternating matrices with bounded rank: in further research, the results of this
article will be used to generalize theorems of Radwan and Loewy \cite{Loewy,LoewyRadwan} on the structure of such spaces.

The determination of the range-compatible homomorphisms on $\Mats_n(\K)$ was achieved in \cite{dSPRC1}.
For the characteristic $2$ case, we need some additional terminology before we can state the result.

\begin{Def}
Assume that $\K$ has characteristic $2$. Let $V$ and $V'$ be vector spaces over $\K$.
A \textbf{root-linear} map
$f : V \rightarrow V'$ is a group homomorphism such that
$$\forall (\lambda,x)\in \K \times V, \; f(\lambda^2 x)=\lambda f(x).$$
\end{Def}

Note that if $\K$ has characteristic $2$ and more than $2$ elements, then only the zero map from $V$ to $V'$ is both linear and root-linear.
However, if $\# \K=2$ root-linear maps coincide with linear maps.

\begin{theo}[Theorem 3.1 of \cite{dSPRC1}]\label{fullsymmetric}
Let $n \geq 2$ be an integer.
\begin{enumerate}[(a)]
\item If $\K$ has characteristic not $2$, then every range-compatible homomorphism on $\Mats_n(\K)$ is local.
\item If $\K$ has characteristic $2$, then the range-compatible homomorphisms on $\Mats_n(\K)$ are the sums of the local maps and
the maps of the form $M \mapsto \Delta(M)^\alpha$
where $\alpha$ is a root-linear map from $\K$ to $\K$.
\item If $\K$ has more than $2$ elements, then every range-compatible linear map on $\Mats_n(\K)$ is local.
\end{enumerate}
\end{theo}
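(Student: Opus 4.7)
\medskip
\noindent\textbf{Proof plan.}
The plan is to analyze the homomorphism $F$ via its action on rank-$1$ symmetric matrices $XX^T$ (whose image is $\K X$), reducing the problem to functional equations for a few additive functions $\K \to \K$.

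First, since each $E_{ii}=e_ie_i^T$ has image $\K e_i$, I write $F(E_{ii})=c_ie_i$ and replace $F$ by $F-L_x$, where $x:=(c_1,\dots,c_n)^T$ and $L_x:M\mapsto Mx$ is local; this reduces to the case $F(E_{ii})=0$ for all $i$. Since $F$ is a group homomorphism, it is then determined by the additive functions $f_i(\lambda)$, $g_{ij}(\lambda)$, $h_{ij}(\lambda)$ giving the coordinates on $e_i$ and $e_j$ of $F(\lambda E_{ii})$ and of $F(\lambda(E_{ij}+E_{ji}))$, all of which satisfy $f_i(1)=0$.

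Second, evaluating the range-compatibility condition on $(e_i+\lambda e_j)(e_i+\lambda e_j)^T$ and on $(\lambda e_i+e_j)(\lambda e_i+e_j)^T$ produces the pair of identities
$$\lambda g_{ij}(\lambda)=f_j(\lambda^2)+h_{ij}(\lambda),\qquad \lambda h_{ij}(\lambda)=f_i(\lambda^2)+g_{ij}(\lambda),$$
which combine into $(\lambda^2-1)g_{ij}(\lambda)=\lambda f_j(\lambda^2)+f_i(\lambda^2)$. For $n\geq 3$, applying the same rank-$1$ analysis to $X=e_i+\lambda e_j+\nu e_k$ with distinct $i,j,k$ and matching the $e_j$-coordinate on both sides yields the additional relation $g_{jk}(\lambda\nu)=\lambda\,g_{ik}(\nu)$. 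Setting $\nu=1$ forces every $g_{ij}$ to be $\K$-linear, and comparing the slopes obtained through the different choices of third index shows that they all equal a single scalar $c$; the key identity then gives $f_j(\lambda^2)=\lambda(\lambda-1)c$ for every $j$.

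Third, in characteristic not $2$ the consistency of $\lambda(\lambda-1)c$ under $\lambda\mapsto-\lambda$ forces $-2\lambda c=0$ for every $\lambda$, hence $c=0$; so all $g_{ij}$ vanish and each $f_i$ vanishes on $\K^2$. The polarization identity $x=\bigl(\tfrac{x+1}{2}\bigr)^2-\bigl(\tfrac{x-1}{2}\bigr)^2$, combined with the additivity of $f_i$, then yields $f_i\equiv 0$, so $F=0$ and (a) follows. In characteristic $2$ polarization fails and the consistency constraint is vacuous; a closer inspection of the same functional equations (now with $\lambda^2-1=(\lambda+1)^2$) shows that the common residual diagonal function $\alpha$ must be root-linear, and a direct calculation identifies $F-L_x$ with $M\mapsto\Delta(M)^\alpha$, proving (b). Statement (c) then follows at once from (b): if $F$ is $\K$-linear, then $\alpha$ must be both $\K$-linear and root-linear, and in characteristic $2$ this forces $\alpha\equiv 0$ whenever $\#\K>2$.

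The main obstacle is the case $n=2$, where the three-variable trick is unavailable. There one must extract the same structure directly from the one-parameter family of constraints $F((\lambda e_1+\mu e_2)(\lambda e_1+\mu e_2)^T)\in\K(\lambda e_1+\mu e_2)$ for all $\lambda,\mu$. After eliminating $g_{12}$ and $h_{12}$ via the combined identity above, one obtains a functional equation purely in $f_1$ and $f_2$; specializing $\mu$ to small integer values and exploiting $f_i(m^2\lambda^2)=m^2 f_i(\lambda^2)$ produces the vanishing of $f_i$ on squares when the characteristic of $\K$ is neither $2$ nor $3$, while characteristic $3$ and the identification of the root-linear structure in characteristic $2$ demand separate, more delicate arguments.
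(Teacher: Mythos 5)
This statement is quoted from elsewhere (it is Theorem 3.1 of \cite{dSPRC1}) and the present paper contains no proof of it, so there is nothing internal to compare your argument against; I can only assess it on its own terms. Your overall strategy --- decompose $F$ along the one-parameter subgroups $\lambda E_{ii}$ and $\lambda(E_{ij}+E_{ji})$, normalise by a local map so that $F(E_{ii})=0$, and extract functional equations for the additive coordinate functions from the constraint $F(aXX^T)\in\K X$ --- is the natural one, and your computations for $n\geq 3$ are correct as far as they go: the identities $\lambda g_{ij}(\lambda)=f_j(\lambda^2)+h_{ij}(\lambda)$, $\lambda h_{ij}(\lambda)=f_i(\lambda^2)+g_{ij}(\lambda)$ and $g_{jk}(\lambda\nu)=\lambda g_{ik}(\nu)$ all check out, and the characteristic-not-$2$ conclusion follows essentially as you say once $\#\K\geq 4$.

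However, there are genuine gaps. First and most seriously, the case $n=2$ is not proved: you explicitly flag it as ``the main obstacle'' and defer characteristic $2$ and characteristic $3$ to ``separate, more delicate arguments'' that you do not supply. Since the theorem is stated for all $n\geq 2$, and since the $2\times 2$ case is precisely what the present paper invokes (e.g.\ in Case 3 of Lemma \ref{twovectorssymlemma} and in the characteristic-$2$ induction), this cannot be waved away; note also that your integer-specialisation trick ($\lambda=2$ giving $6g=6f_2$) already dies in characteristic $3$. Second, part (b) is only asserted: to prove that the residual diagonal function $\alpha$ satisfies the full root-linearity condition $\alpha(\lambda^2 x)=\lambda\alpha(x)$ for \emph{all} $x$, you must run the range-compatibility constraint on the scaled rank-one matrices $aXX^T$ for arbitrary $a\in\K$ (whose diagonal entries are $a\lambda^2$, not squares); your sketch only uses $a=1$ and therefore only controls $\alpha$ at $x=1$. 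Third, a small but real defect in part (a): the ``consistency under $\lambda\mapsto-\lambda$'' argument needs a $\lambda\notin\{0,1,-1\}$ (the identity $f_j(\lambda^2)=\lambda(\lambda-1)c$ was obtained only for $\lambda\neq -1$), so $\K=\F_3$ must be treated separately --- easy, since all additive maps on a prime field are linear, but it must be said. Likewise your claim that the slopes of all the $g_{ij}$ coincide actually comes from comparing the pair identities for $g_{ij}$ and $g_{ji}$ (the three-index relation only equates slopes with the same second index), and again needs $\#\K\geq 4$.
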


\subsection{Main results}

\begin{Def}
Let $\calS$ be a linear subspace of $\Mats_n(\K)$. \\
A range-compatible homomorphism on $\calS$ is called \textbf{standard} when
it can be extended into a range-compatible homomorphism on the full space $\Mats_n(\K)$.
\end{Def}

Thus, if $\K$ has characteristic not $2$ and $n \geq 2$, then standard range-compatible homomorphisms
are just local maps. If $\K$ has characteristic $2$, then a range-compatible homomorphism on $\calS$
is standard if and only if it is the sum of a local map and of a map of the form $M \mapsto \Delta(M)^\alpha$
for some root-linear form $\alpha$ on $\K$.

Now, we can state our main result on range-compatible homomorphisms over spaces of symmetric matrices.

\begin{theo}\label{maintheosym}
Let $\calS$ be a linear subspace of $\Mats_n(\K)$ such that $\codim \calS \leq n-2$. \\
Then, every range-compatible homomorphism on $\calS$ is standard.
\end{theo}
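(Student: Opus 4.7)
The plan is to proceed by induction on $n$. The base case $n=2$ is immediate: the hypothesis $\codim \calS \leq n-2 = 0$ forces $\calS = \Mats_2(\K)$, and Theorem \ref{fullsymmetric} gives the result. For the inductive step I assume $n \geq 3$ and that the statement is known for all subspaces of $\Mats_k(\K)$ with $k < n$ of codimension at most $k-2$. Let $\calS \subset \Mats_n(\K)$ with $\codim \calS \leq n-2$ and let $F : \calS \to \K^n$ be a range-compatible homomorphism.

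I would first exploit the congruence action of $\GL_n(\K)$ on $\Mats_n(\K)$ by $M \mapsto P^T M P$: range-compatibility transports naturally via $F \mapsto \widetilde F$ with $\widetilde F(M) := (P^T)^{-1} F(P^T M P)$, and the standardness property is preserved (local maps go to local maps, and in characteristic $2$ root-linear diagonal contributions are preserved). This lets me put $\calS$ into a normalized position relative to the canonical basis. Next I would split off the last row and column, writing
\[
M = \begin{bmatrix} A & b \\ b^T & c \end{bmatrix}, \quad A \in \Mats_{n-1}(\K),\ b \in \K^{n-1},\ c \in \K,
\]
and introducing the subspace $\calT \subset \Mats_{n-1}(\K)$ consisting of those $A$ for which the matrix with $A$ in the top-left block and zeros elsewhere belongs to $\calS$.

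The restriction of $F$ to this embedded copy of $\calT$ is a range-compatible homomorphism from $\calT$ to $\K^{n-1} \subset \K^n$. Assuming the normalization brings us to a configuration where $\codim_{\Mats_{n-1}(\K)} \calT \leq n - 3$, the inductive hypothesis applies and yields a standard form on $\calT$. I would then subtract off the associated standard homomorphism on $\Mats_n(\K)$ (a local map, plus in characteristic $2$ a term $M \mapsto \Delta(M)^\alpha$) and reduce to the case where $F$ vanishes on the embedded $\calT$. It remains to determine $F(M)$ on matrices of $\calS$ with nonzero $b$ or $c$: the combination of range-compatibility ($F(M) \in \im M$) and additivity ($F(M+M')=F(M)+F(M')$), applied to a generating family of such matrices obtained from the normalization, should force $F$ to coincide with a standard map on all of $\calS$.

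The main obstacle is controlling $\codim_{\Mats_{n-1}(\K)} \calT$. A direct dimension count gives
\[
\codim_{\Mats_{n-1}(\K)} \calT = \codim_{\Mats_n(\K)} \calS + \dim \pi(\calS) - n,
\]
where $\pi : \calS \to \K^{n-1} \times \K$, $M \mapsto (b,c)$. This is at most $n-2$, and it reaches $n-2$ precisely when $\pi$ is surjective; in that borderline situation the induction does not apply directly. Handling it will likely require either a more judicious choice of splitting direction by congruence, so that $\pi(\calS)$ avoids at least one coordinate, or a separate direct argument that treats a carefully chosen larger subspace of $\calS$. A second layer of difficulty is specific to characteristic $2$: the standard maps then form a two-parameter family (local maps plus root-linear diagonal contributions), and the root-linear form $\alpha$ must be correctly identified from $F$ and propagated consistently through the reduction.
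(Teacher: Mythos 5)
Your plan diverges from the paper's: the paper inducts on the dimension of the \emph{target}, quotienting $\K^n$ by a line $\K f^\star$ (which is why it must enlarge the setting to self-adjoint operators $U \to V^\star$, i.e.\ matrix spaces of the form $\Mats_n(\K)\coprod\Mat_{n,p-n}(\K)$, since a quotient of a symmetric space is no longer symmetric), whereas you induct on $n$ by compressing to the leading $(n-1)\times(n-1)$ block. The decisive advantage of the quotient route is that every element of $\calS$ survives the quotient, so knowing that $F \modu f_1^\star$ and $F\modu f_2^\star$ are standard for two independent forms constrains $F$ on \emph{all} of $\calS$; your compression only sees $F$ on the proper subspace $\calT_{\mathrm{emb}} := \ker\pi|_{\calS}$.

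Two genuine gaps remain. First, the borderline case you flag is real and not obviously avoidable: when $\codim\calS = n-2$ and $\pi$ is surjective in every congruence position, you get $\codim_{\Mats_{n-1}(\K)}\calT = n-2 > (n-1)-2$, the induction hypothesis does not apply, and you offer no argument. Second, and more seriously, the final step is the actual content of the theorem and is missing. After your reduction, $F$ vanishes on $\calT_{\mathrm{emb}}$ and hence, by additivity, factors through $\pi(\calS)$; you must then show that a homomorphism $H : \pi(\calS)\to\K^n$ satisfying $H(\pi(M))\in\im M$ for all $M\in\calS$ is standard. Additivity gives nothing further here --- the values of $F$ on a complement of $\calT_{\mathrm{emb}}$ in $\calS$ are a priori arbitrary --- so everything rests on exploiting range-compatibility through explicit matrix constructions inside $\calS$ (not inside $\Mats_n(\K)$: convenient witnesses such as matrices with vanishing leading block need not lie in $\calS$). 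This is precisely the content of the paper's Lemma \ref{twovectorssymlemma} and of the characteristic-$2$ claims, which require a delicate multi-case analysis and are where the hypothesis $\codim\calS\le n-2$ and the characteristic-$2$ pathologies (root-linear maps, the field with two elements) actually intervene. Your normalization observation is correct --- congruence $M\mapsto P^TMP$ does preserve locality and, in characteristic $2$, fixes the maps $M\mapsto\Delta(M)^\alpha$, since $\Delta(P^TMP)^\alpha=P^T\Delta(M)^\alpha$ by root-linearity --- but as written the proposal is a strategy whose hardest steps are unexecuted rather than a proof.
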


If $\# \K>2$, only the zero map is both linear and root-linear. Thus, Theorem \ref{maintheosym} yields:

\begin{cor}
Let $\calS$ be a linear subspace of $\Mats_n(\K)$ such that $\codim \calS \leq n-2$ and $\# \K>2$.
Then, every range-compatible linear map on $\calS$ is local.
\end{cor}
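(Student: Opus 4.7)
The plan is to reduce the statement to Theorem \ref{maintheosym} and then use the hypothesis $\#\K>2$ to eliminate the root-linear correction that can appear in characteristic $2$. Let $F : \calS \rightarrow \K^n$ be a range-compatible linear map. Since $F$ is in particular a range-compatible group homomorphism, Theorem \ref{maintheosym} ensures that $F$ extends to a range-compatible homomorphism $\widetilde{F}$ on $\Mats_n(\K)$. Theorem \ref{fullsymmetric} then classifies $\widetilde{F}$: if $\K$ has characteristic different from $2$, the extension is local and the corollary follows at once; otherwise there exist a vector $x_0 \in \K^n$ and a root-linear map $\alpha : \K \rightarrow \K$ with
$$\widetilde{F}(M) = M x_0 + \Delta(M)^\alpha \quad \text{for every } M \in \Mats_n(\K).$$

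In the characteristic $2$ case, the key point is to show $\alpha = 0$. Since both $F$ and the local map $M \mapsto M x_0$ are linear, the map $G : \calS \rightarrow \K^n,\ M \mapsto \Delta(M)^\alpha$ is linear on $\calS$. The codimension bound rules out $\Delta(\calS) = \{0\}$: in characteristic $2$, the kernel of $\Delta : \Mats_n(\K) \rightarrow \K^n$ is $\Mata_n(\K)$, whose codimension in $\Mats_n(\K)$ equals $n$, so $\Delta(\calS) = \{0\}$ would give $\codim \calS \geq n$, contradicting $\codim \calS \leq n-2$. Picking $M_0 \in \calS$ with some diagonal entry $v := (\Delta(M_0))_i$ nonzero, the linearity of $G$ along the line $\K M_0 \subseteq \calS$ yields $\alpha(\lambda v) = \lambda \alpha(v)$ for every $\lambda \in \K$; substituting $t = \lambda v$ shows that $\alpha$ is $\K$-linear, i.e., $\alpha(t) = c\, t$ for some $c \in \K$. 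Combined with the root-linear identity $\alpha(\lambda^2) = \lambda \alpha(1)$, this forces $c \lambda^2 = c \lambda$ for all $\lambda \in \K$; since $\#\K > 2$, some $\lambda \in \K \setminus \F_2$ exists, which imposes $c = 0$, hence $\alpha = 0$. Therefore $F$ is the local map $M \mapsto M x_0$.

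The only subtle point is the characteristic $2$ portion, where one must confirm that the codimension hypothesis is strong enough to expose the diagonal: the strict inequality $\codim \calS < n$ is exactly what keeps $\calS$ from being contained in $\Mata_n(\K)$, thereby allowing the linearity of $F$ to constrain the root-linear map $\alpha$ on a nontrivial scalar line and force it to vanish.
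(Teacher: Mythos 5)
Your proof is correct and follows essentially the same route as the paper: apply Theorem \ref{maintheosym} to obtain a standard extension and then use $\#\K>2$ to kill the root-linear part. The paper disposes of the characteristic-$2$ step in one line---the restriction of $M\mapsto\Delta(M)^\alpha$ to $\calS$ is both linear and root-linear, hence zero---so your detour through $\Delta(\calS)\neq\{0\}$ and the explicit computation showing $\alpha=0$ is sound but not needed.
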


The following counter-example demonstrates that the upper-bound $n-2$ is optimal.
For $n \geq 2$, consider the subspace $\calS:=\Mats_1(\K) \oplus \Mats_{n-1}(\K)$ of $\Mats_n(\K)$.
Note that $\calS$ has codimension $n-1$ in $\Mats_n(\K)$.
If there exists a non-linear homomorphism $\varphi : \K \rightarrow \K$ (which is the case if and only if $\K$ is not a prime field)
then the mapping
$$F : (m_{i,j}) \in \calS \mapsto \begin{bmatrix}
\varphi(m_{1,1}) \\
[0]_{(n-1) \times 1}
\end{bmatrix}$$
is obviously range-compatible but we check that it is not standard.
Indeed, as $F$ is non-linear, if it is standard then $\K$ has characteristic $2$ and more than $2$ elements
and there is a root-linear form $\alpha$ on $\K$ and linear forms $\beta_1,\dots,\beta_n$ on $\calS$ such that
$F : M=(m_{i,j}) \mapsto  \begin{bmatrix}
\beta_1(M) \\
\vdots \\
\beta_n(M)
\end{bmatrix}+\Delta(M)^\alpha$. Judging from the last entry in $F(M)$, we would find that $\alpha$ is both root-linear and linear, and hence $\alpha=0$;
 then we would see from the first entry of $F(M)$ that $\varphi$ is linear, contradicting our assumptions.

\begin{Rem}\label{symlinremark}
An open problem remains to find the optimal upper-bound on the codimension on $\calS$
for all range-compatible \emph{linear} maps on $\calS$ to be local, provided that $\K$ has more than $2$ elements of course.
We conjecture that a reasonable upper-bound should be $2\dim V-4$, except for some very small fields
for which special cases might arise. In any case, the following example shows that the upper-bound should be less than or equal to
$2 \dim V-4$.

Consider the space $\calU_2$ of all matrices of the form $\begin{bmatrix}
a & b \\
b & 0
\end{bmatrix}$ with $(a,b)\in \K^2$, and the map $F : (m_{i,j})\in \calU \mapsto \begin{bmatrix}
m_{1,1} \\
0
\end{bmatrix}$. The map $F$ is non-local. However, $F$ is range-compatible
because whenever $M \in \calU_2 \setminus \{0\}$ satisfies $F(M)\neq 0$,
either $F(M)$ is the first column of $M$ or else $M$ has rank $2$ and hence its range contains $F(M)$.
For all $n \geq 2$, we extend this counter-example as follows: we consider the space
$\calS:=\calU_2 \oplus \Mats_{n-2}(\K)$ and, for all $M=\begin{bmatrix}
A & [0] \\
[0] & B
\end{bmatrix}$ with $A \in \calU_2$ and $B \in \Mats_{n-2}(\K)$, we set
$$\widetilde{F}(M):=\begin{bmatrix}
F(A) \\
[0]_{(n-2) \times 1}
\end{bmatrix}.$$
Then, it is easy to check that $\widetilde{F}$ is linear, range-compatible and non-local, whereas $\codim \calS=2n-3$.
\end{Rem}

\vskip 3mm
Now, we turn to our main result on range-compatible homomorphisms on spaces of alternating matrices.

\begin{theo}\label{alttheo}
Let $\calS$ be a linear subspace of $\Mata_n(\K)$ such that $\codim \calS \leq n-3$. \\
Then, every range-compatible homomorphism on $\calS$ is local.
\end{theo}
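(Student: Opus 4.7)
The plan is to proceed by induction on $n$, mirroring the strategy of the proof of Theorem \ref{maintheosym} for the symmetric case while exploiting the extra rigidity of alternating matrices: their diagonals vanish identically, which accounts both for the absence of a ``standard'' correction term in the conclusion and for the slightly stricter codimension bound $n-3$ (versus $n-2$ in the symmetric case).

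For the base case, when $n \leq 2$ the statement is vacuous, and when $n = 3$ the hypothesis forces $\calS = \Mata_3(\K)$: every non-zero element then has rank $2$, its image being the plane spanned by the defining vectors of its rank-$2$ alternating decomposition, and range-compatibility combined with the homomorphism property suffices to pin $F$ down as a local map. For the induction step with $n \geq 4$, I would use the congruence action $M \mapsto PMP^T$ of $\GL_n(\K)$ on $\Mata_n(\K)$---which preserves both the codimension hypothesis and locality, by transforming $F$ into $N \mapsto P\,F(P^{-1}NP^{-T})$---to normalize $\calS$ into a convenient position. Then I would consider the subspace $\calS_0 := \{M \in \calS : \text{last row of } M = 0\}$, whose last column also vanishes by alternation, so that $\calS_0$ is naturally identified with a subspace of $\Mata_{n-1}(\K)$. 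After applying the induction hypothesis to $F|_{\calS_0}$ to extract a local vector $y \in \K^{n-1}$, I would extend $y$ by zero to $\widetilde{y} \in \K^n$ and subtract the local map $M \mapsto M\widetilde{y}$, so that the modified $F$ vanishes on $\calS_0$. Then $F$ factors through the quotient $\calS/\calS_0$, which embeds into $\K^{n-1}$ via the ``last row'' map $\phi : \calS \to \K^{n-1}$.

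The main obstacle is a ``one unit gap'' in the induction step: a dimension count gives only $\codim_{\Mata_{n-1}(\K)} \calS_0 \leq \codim \calS \leq n-3$, whereas applying the induction hypothesis in dimension $n-1$ requires codimension at most $n-4$. To close this gap, I would distinguish two cases. If $\codim \calS_0 \leq n-4$, the induction hypothesis applies directly. If instead $\codim \calS_0 = n-3$, then $\codim \calS = n-3$ must hold as well and $\phi$ is forced to be surjective; one can then, for each $u \in \K^{n-1}$, produce matrices $M_u \in \calS$ with last row $u$ by combining rank-$2$ pieces of the form $\widetilde{u}\, e_n^T - e_n\, \widetilde{u}^T$ (with $\widetilde{u}=(u,0)\in \K^n$) with elements of $\calS_0$. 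The range-compatibility constraints on these $M_u$, whose images are controlled planes of the form $\Vect(\widetilde{u}, e_n)$ (plus a small correction coming from $\calS_0$), yield enough linear conditions on the induced homomorphism $\widetilde{F} : \im \phi \to \K^n$ to identify it with a map of the form $v \mapsto -c\,(v,0)$ for some constant $c \in \K$, and hence to identify $F$ with the local map $M \mapsto M(c\,e_n)$ on all of $\calS$. The crux is extracting the correct constant $c$ and simultaneously ruling out any non-linear additive contribution coming from the fact that $\widetilde{F}$ is a priori only a group homomorphism.
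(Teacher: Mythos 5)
Your proposal takes a genuinely different route from the paper (which quotients the \emph{target} by lines $\K f^\star$, and for that reason must generalize the statement to spaces of alternating operators $\calS\subseteq\calL_a(U,V^\star)$ with $U\neq V$, Theorem \ref{generalizedalttheo}, running the induction on $\dim V$), whereas you restrict the \emph{source} to the corner subspace $\calS_0$ so as to stay inside square alternating matrices. That is a legitimate strategy to attempt, and your identification of the codimension gap ($\codim\calS_0$ can reach $n-3$ while the induction hypothesis in size $n-1$ needs $n-4$) is correct, as is the observation that the bad case forces $\codim\calS=n-3$ and $\phi$ surjective.

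However, the decisive step is missing and the sketch you give for it fails. Once $F$ vanishes on $\calS_0$ and factors as $\widetilde F\circ\phi$, the only constraints on the group homomorphism $\widetilde F$ are $\widetilde F(u)\in\im M$ for $M\in\phi^{-1}(u)$. Your claim that each $u$ can be realized by a matrix whose image is the plane $\Vect(\widetilde u,e_n)$ ``plus a small correction coming from $\calS_0$'' is not true: $\phi^{-1}(u)$ is a coset of $\calS_0$, whose codimension in $\Mata_{n-1}(\K)$ is only bounded by $n-3$, so the upper-left block of $M_u$ is determined only modulo a very large subspace and cannot in general be taken to be $0$ or even of low rank. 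The set of $u$ for which $\widetilde u\,e_n^T-e_n\widetilde u^T$ itself lies in $\calS$ is only guaranteed to have dimension $2$ inside $\K^{n-1}$; for all other $u$ the available matrices $M_u$ typically have large rank and $\im M_u$ imposes essentially no condition on $\widetilde F(u)$. Hence neither the constant $c$ nor the absence of a non-linear additive component of $\widetilde F$ is established --- and this is precisely where the paper has to work hardest (two independent target-quotients $f_1^\star,f_2^\star$, the recombination argument of the four Steps in Section \ref{altinduction}, and a separate, fairly involved treatment of the base case $\dim V=3$ via the spaces $\calM_f$). Your base case is also understated: $\calS=\Mata_3(\K)$ is right, but pinning $F$ down there already requires the row decomposition of Corollary \ref{rowlemma} and a polynomial-identity argument showing the six component endomorphisms are linear with matching values at $1$ (Lemma \ref{dim3lemmar=0}). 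As it stands the proposal is an outline with a genuine gap at its crux.
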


The following example shows that the upper-bound $n-3$ is optimal.
Consider indeed the subspace $\calS$ consisting of all the alternating matrices $A=(a_{i,j})$
such that $a_{i,1}=0$ for all $i \in \lcro 3,n\rcro$.
If $\K$ is not a prime field, we can choose a non-linear group endomorphism $\varphi$ of $\K$,
and hence
$$A \in \calS \mapsto \begin{bmatrix}
0 \\
\varphi(a_{2,1}) \\
[0]_{(n-2) \times 1}
\end{bmatrix}$$
is obviously range-compatible, but it is non-linear and hence non-local.

As in the symmetric case, the upper bound $n-3$ in Theorem \ref{alttheo} does not seem to be optimal if
we restrict the study to range-compatible \emph{linear} maps.
The following examples suggest that the optimal upper-bound in this situation
could be $2n-6$ if $\# \K>2$, and $2n-7$ if $\# \K=2$,
unless $n \leq 3$ in which case no provision on the codimension of $\calS$ appears necessary.

\begin{ex}
Assume that $n \geq 4$.
Consider the space $\calU$ of all matrices of $\Mat_2(\K)$ of the form $\begin{bmatrix}
a & b \\
0 & a
\end{bmatrix}$ with $(a,b)\in \K^2$. As in Remark \ref{symlinremark}, one checks that $\begin{bmatrix}
a & b \\
0 & a
\end{bmatrix} \mapsto \begin{bmatrix}
b \\
0
\end{bmatrix}$ is range-compatible.
Now, consider the space $\calS$ of all matrices of the form
$$\begin{bmatrix}
[0]_{2 \times 2} & -A^T & [0]_{2 \times (n-4)} \\
A & B & -C^T \\
[0]_{(n-4) \times 2} & C & D
\end{bmatrix}$$
in which $A \in \calU$, $B \in \Mata_2(\K)$, $C \in \Mat_{n-4,2}(\K)$ and $D \in \Mata_{n-4}(\K)$.
One checks that $\calS$ is a linear subspace of $\Mata_n(\K)$ with codimension $2n-5$.
The linear map
$$F : (m_{i,j}) \in \calS \mapsto \begin{bmatrix}
0 \\
0 \\
m_{3,2} \\
[0]_{(n-3) \times 1}
\end{bmatrix}
$$
is range-compatible (as $F(M)$ is always a linear combination of the first two columns of $M$),
but one checks that it is non-local.
\end{ex}

\begin{ex}
Assume that $n \geq 4$ and $\# \K=2$.
We know from Theorem \ref{fullsymmetric} that
$\begin{bmatrix}
a & b \\
b & c
\end{bmatrix} \mapsto
\begin{bmatrix}
a \\
c
\end{bmatrix}$ is range-compatible.
Now, consider the space $\calS$ of all matrices of the form
$$\begin{bmatrix}
[0]_{2 \times 2} & -A^T & [0]_{2 \times (n-4)} \\
A & B & -C^T \\
[0]_{(n-4) \times 2} & C & D
\end{bmatrix}$$
in which $A \in \Mats_2(\K)$, $B \in \Mata_2(\K)$, $C \in \Mat_{n-4,2}(\K)$ and $D \in \Mata_{n-4}(\K)$.
One checks that $\calS$ is a linear subspace of $\Mata_n(\K)$ with codimension $2n-6$.
However, the linear map
$$F : (m_{i,j}) \in \calS \mapsto \begin{bmatrix}
0 \\
0 \\
m_{3,1} \\
m_{4,2} \\
[0]_{(n-4) \times 1}
\end{bmatrix}
$$
is range-compatible (as $F(M)$ is always a linear combination of the first two columns of $M$),
but one checks that it is non-local.
\end{ex}

As an easy consequence of Theorem \ref{alttheo}, we obtain the following result.

\begin{theo}\label{alttheofull}
Let $n$ be an arbitrary non-negative integer.
Every range-compatible linear map on $\Mata_n(\K)$ is local.
\end{theo}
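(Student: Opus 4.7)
The plan is to derive Theorem \ref{alttheofull} as an immediate corollary of Theorem \ref{alttheo}, after separately handling a small number of degenerate low-dimensional cases. The key observation is that the full space $\Mata_n(\K)$ is a linear subspace of itself of codimension $0$, so the hypothesis $\codim \calS \leq n-3$ of Theorem \ref{alttheo} is satisfied as soon as $n \geq 3$. Moreover, a range-compatible linear map is in particular a range-compatible group homomorphism, so Theorem \ref{alttheo} applied with $\calS = \Mata_n(\K)$ directly yields the desired conclusion for all $n \geq 3$.

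It then suffices to treat $n \in \{0,1,2\}$ by hand. For $n \in \{0,1\}$, the space $\Mata_n(\K)$ is reduced to the zero matrix, so the only map $F : \Mata_n(\K) \to \K^n$ is the zero map, which is local (take $x = 0$). For $n = 2$, the space $\Mata_2(\K)$ is one-dimensional and spanned by $J := E_{1,2}-E_{2,1}$, and a linear map $F : \Mata_2(\K) \to \K^2$ is entirely determined by $F(J) =: \begin{bmatrix} v_1 \\ v_2 \end{bmatrix}$. Setting $x := \begin{bmatrix} -v_2 \\ v_1\end{bmatrix}$, a direct computation gives $J(x) = F(J)$, whence $F(aJ) = a F(J) = (aJ)(x)$ for all $a \in \K$; thus $F$ is local.

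There is no real obstacle here, since the heavy lifting has already been performed in the proof of Theorem \ref{alttheo}; the result is obtained simply by noticing that the codimension bound $n-3$ becomes vacuous when applied to the ambient space itself, plus a trivial verification in the remaining low-dimensional cases where the bound $n-3$ would be negative.
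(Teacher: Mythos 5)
Your proposal is correct and follows essentially the same route as the paper: invoke Theorem \ref{alttheo} for $n \geq 3$ (where the codimension hypothesis is vacuous for the full space) and settle $n \leq 2$ by hand. The paper handles the low-dimensional cases via the general observation that every range-compatible linear map on a one-dimensional operator space is local, whereas you do an explicit coordinate computation for $n=2$; both are fine.
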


Let us immediately show how this result follows from Theorem \ref{alttheo}.

\begin{proof}[Proof of Theorem \ref{alttheofull}]
For $n \geq 3$, the result is a direct consequence of Theorem \ref{alttheo}.
On the other hand, in a linear subspace $\calS=\K s_1$ of $\calL(U,V)$ with dimension at most $1$, every-range compatible linear map is local:
indeed, given a linear map $F : \calS \rightarrow V$ that is range-compatible, we have some $x \in U$ such that $F(s_1)=s_1(x)$,
and as the linear maps $F$ and $s \mapsto s(x)$ coincide on a generating subset of the vector space $\calS$, they are equal.
For $n \leq 2$, the claimed result thus follows from that general basic point.
\end{proof}

\subsection{Strategy and structure of the article}

In this article, we will follow the main method from \cite{dSPRC1}, which consists
in performing inductive proofs over the dimension of the target space $V$, by using quotient spaces.
Here, a major problem appears: if $\calS$ is a subspace of $\Mats_n(\K)$ or of $\Mata_n(\K)$, ``moding out"
a  non-zero subspace $V_0$ of $V$ yields an operator space $\calS \modu V_0$ which cannot be represented by a space
of symmetric of alternating matrices, simply because the source and target spaces of the operators in $\calS$
no longer have the same dimension! In order to rescue the quotient space technique, it is necessary to enlarge
the theorem as to encompass not only subspaces of $\Mats_n(\K)$, but also subspaces of $\Mats_n(\K) \coprod \Mat_{n,p}(\K)$
for arbitrary integers $n \geq 2$ and $p \geq 0$ (and ditto for alternating matrices instead of symmetric ones).
To avoid the need of constantly going from matrix spaces to operator spaces, we shall reframe
the results in terms of subspaces of self-adjoint or alternating operators from $U$ to the dual space of
one of its subspaces $V$. Then, the generalized versions of Theorems \ref{maintheosym} and \ref{alttheo}
can be proved by induction on the dimension of $V$ (with fixed $U$), and Theorems
\ref{maintheosym} and \ref{alttheo} follow from the special case when $U=V$.

The rest of the article is organized as follows. In Section \ref{techniques}, we give
a quick review of the main techniques for the study of range-compatible homomorphisms, that is the quotient space technique
and the splitting technique. We also introduce some notation on duality that is used throughout the text.
Sections \ref{RCsym} and \ref{RCalt} are devoted to the study of range-compatible homomorphisms, respectively on spaces of self-adjoint operators
and on spaces of alternating operators. These two sections are logically independent from one another, but they are essentially based
upon similar methods. In the self-adjoint case, the main difficulty comes from the case of fields with characteristic $2$,
whereas for alternating operators the proof is common to all fields, but substantially more technical.

\section{Quotient and splitting space techniques}\label{techniques}

\subsection{Quotient space techniques}

We recall the following notation and result from \cite{dSPRC1}.

\begin{Not}
Let $\calS$ be a linear subspace of $\calL(U,V)$ and $V_0$ be a linear subspace of $V$.
Denote by $\pi : V \rightarrow V/V_0$ the canonical projection.
Then, we set
$$\calS \modu V_0:=\bigl\{\pi \circ s \mid s \in \calS\bigr\},$$
which is a linear subspace of $\calL(U,V/V_0)$.
\end{Not}

\begin{lemma}\label{quotientgeneral}
Let $F : \calS \rightarrow V$ be a range-compatible homomorphism.
Then, there is a unique range-compatible homomorphism $F \modu V_0 : \calS \modu V_0 \rightarrow V/V_0$
such that
$$\forall s \in \calS, \; (F \modu V_0)(\pi \circ s)=\pi(F(s)),$$
i.e.\ such that the following diagram is commutative
$$\xymatrix{
\calS \ar[rr]^F \ar[d]_{s \mapsto \pi \circ s} & & V \ar[d]^\pi \\
\calS \modu V_0 \ar[rr]_{F \modu V_0} & & V/V_0.
}$$
\end{lemma}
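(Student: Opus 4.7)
The plan is to define $F \modu V_0$ by the formula $(F \modu V_0)(\pi \circ s) := \pi(F(s))$ and then to show, in order, that this is well-defined, that it is a group homomorphism, that it is range-compatible, and finally that it is the unique such map.

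The only step that carries any content is well-definedness, which is the expected main obstacle. Concretely, suppose $s_1, s_2 \in \calS$ satisfy $\pi \circ s_1 = \pi \circ s_2$; I have to check that $\pi(F(s_1)) = \pi(F(s_2))$. Setting $s := s_1 - s_2 \in \calS$, the assumption reads $\pi \circ s = 0$, which is equivalent to $\im s \subseteq V_0$. Since $F$ is a group homomorphism, $F(s_1) - F(s_2) = F(s)$, and range-compatibility of $F$ gives $F(s) \in \im s \subseteq V_0$, so $\pi(F(s_1) - F(s_2)) = 0$, as required. This argument uses in an essential way both that $F$ is additive and that it is range-compatible; neither alone would suffice.

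Once the map $F \modu V_0$ is well-defined on $\calS \modu V_0$, the remaining verifications are immediate. It is a group homomorphism because for $s, s' \in \calS$,
$$(F \modu V_0)\bigl(\pi \circ s + \pi \circ s'\bigr) = (F \modu V_0)\bigl(\pi \circ (s+s')\bigr) = \pi(F(s+s')) = \pi(F(s)) + \pi(F(s')),$$
using the additivity of $\pi$ and of $F$. It is range-compatible because, for any $s \in \calS$, one has $\im(\pi \circ s) = \pi(\im s)$ and $(F \modu V_0)(\pi \circ s) = \pi(F(s)) \in \pi(\im s) = \im(\pi \circ s)$, the inclusion coming from $F(s) \in \im s$.

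Finally, uniqueness is automatic: the map $s \in \calS \mapsto \pi \circ s \in \calS \modu V_0$ is surjective by the very definition of $\calS \modu V_0$, so a homomorphism out of $\calS \modu V_0$ is completely determined by the commutativity of the diagram. This completes the plan; no induction, dimension count, or field hypothesis is needed, and the lemma is in fact purely formal given the definitions and the hypotheses on $F$.
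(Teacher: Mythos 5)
Your proof is correct and is the standard (indeed essentially the only) argument: well-definedness via additivity plus range-compatibility, and uniqueness from surjectivity of $s \mapsto \pi \circ s$. The paper itself states this lemma without proof, citing \cite{dSPRC1}, and your argument is exactly the one expected there.
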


When $V_0=\K y$ for some non-zero vector $y$, we simply write $\calS \modu y$ instead of $\calS \modu \K y$, and $F \modu y$ instead of
$F \modu \K y$.

In terms of matrices, the case when $V_0$ is a hyperplane reads as follows.

\begin{cor}\label{rowlemma}
Let $\calS$ be a linear subspace of $\Mat_{n,p}(\K)$ and $F : \calS \rightarrow \K^n$
be a range-compatible homomorphism. Then, there are group homomorphisms $F_1,\dots,F_n$
such that
$$\forall M \in \calS, \quad F(M)=\begin{bmatrix}
F_1(R_1(M)) \\
\vdots \\
F_n(R_n(M))
\end{bmatrix}$$
where $R_1(M),\dots,R_n(M)$ denote the rows of $M$.
\end{cor}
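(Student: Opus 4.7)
The plan is to apply Lemma \ref{quotientgeneral} once per row index. For each $i \in \lcro 1,n\rcro$, let $H_i$ denote the hyperplane of $\K^n$ spanned by the canonical basis vectors $e_j$ with $j \neq i$, so that $\K^n/H_i$ identifies canonically with $\K$ via the $i$-th coordinate form. Under this identification, for any matrix $M \in \calS$ viewed as a linear map $\K^p \to \K^n$, the composition of $M$ with the projection $\pi_i : \K^n \to \K^n/H_i$ is precisely the linear form $X \mapsto R_i(M)\, X$ on $\K^p$ determined by the $i$-th row of $M$.

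With this identification in hand, $\calS \modu H_i$ corresponds to the linear subspace $\calS^{(i)} := \{R_i(M) \mid M \in \calS\}$ of $\Mat_{1,p}(\K)$, and Lemma \ref{quotientgeneral} produces a range-compatible homomorphism $F \modu H_i : \calS \modu H_i \to \K^n/H_i$, which reads as a group homomorphism $F_i : \calS^{(i)} \to \K$ satisfying
$$\forall M \in \calS, \quad F_i(R_i(M)) = \bigl(F(M)\bigr)_i.$$
The essential content of Lemma \ref{quotientgeneral} being invoked here is the well-definedness of $F_i$: whenever two matrices of $\calS$ share their $i$-th row, the corresponding values of $F$ share their $i$-th entry. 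One may then extend each $F_i$ arbitrarily to a group homomorphism from the whole of $\Mat_{1,p}(\K)$ to $\K$ if desired (for instance by choosing a group-theoretic complement of $\calS^{(i)}$), though this is inessential.

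Assembling the $n$ identities for $i = 1, \dots, n$ yields the claimed formula. There is no genuine obstacle: the whole proof amounts to reading Lemma \ref{quotientgeneral} one coordinate at a time and recognizing that moding out a coordinate hyperplane in $\K^n$ amounts to keeping only the corresponding row of each matrix in $\calS$.
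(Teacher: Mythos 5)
Your proposal is correct and is exactly the argument the paper intends: the corollary is stated as the hyperplane case of Lemma \ref{quotientgeneral}, and moding out each coordinate hyperplane $H_i$ identifies $\calS \modu H_i$ with the space of $i$-th rows and yields the homomorphism $F_i$ via the well-definedness part of that lemma. No issues.
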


\subsection{On duality}

\begin{Not}
Given a vector space $V$, the dual space of $V$ (i.e.\ the set of all linear forms on $V$) is denoted by $V^\star$.
\end{Not}

\begin{Def}
Let $V$ be a linear subspace of the vector space $U$.
We denote by $V^o$ the linear subspace of $U^\star$ consisting of the linear forms on $U$ that vanish everywhere on $V$.

Let $W$ be a linear subspace of $U^\star$. We denote by ${}^o W$ the linear subspace of $U$ consisting of the vectors
that annihilate all the linear forms in $W$.
\end{Def}

In any case, the letter ``$o$" stands for ``orthogonal".

Remember that if $U$ is finite-dimensional then ${}^o(V^o)=V$ for every linear subspace $V$ of $U$, and
$({}^o W)^o=W$ for every linear subspace $W$ of $U^\star$.
Moreover, for every linear subspace $W$ of $U^\star$, we have a natural isomorphism
$$U^\star/W \overset{\simeq}{\longrightarrow} ({}^o W)^\star.$$
In the rest of the article, we shall systematically use this isomorphism to identify  $U^\star/W$ with $({}^o W)^\star$.
In particular, given a linear form $f^\star \in U^\star$, the space $U^\star/\K f^\star$ is naturally identified with $(\Ker f^\star)^\star$.

We finish with some terminology on dual bases.

\begin{Def}
Let $U$ be a finite-dimensional vector space and $V$ be a linear subspace of $U$, with respective dimensions $p$ and $n$.
A \textbf{compatible pair of bases} of $U$ and $V^\star$ is a pair $(\bfB_1,\bfB_2^\star)$
in which $\bfB_1=(e_1,\dots,e_p)$ is a basis of $U$ such that $(e_1,\dots,e_n)$ is a basis of $V$ whose dual basis is $\bfB_2^\star$.
\end{Def}

\subsection{The splitting technique}

\begin{Not}
Let $n,p,q$ be non-negative integers, and $\calA$ and $\calB$ be respective subsets of $\Mat_{n,p}(\K)$ and $\Mat_{n,q}(\K)$.
We set
$$\calA \coprod \calB :=\Bigl\{\begin{bmatrix}
A & B
\end{bmatrix} \mid (A,B) \in \calA \times \calB\Bigr\}.$$
\end{Not}

The following easy lemma was established in \cite{dSPRC1}.

\begin{lemma}[Splitting Lemma]\label{splittinglemma}
Let $n,p,q$ be non-negative integers, and $\calA$ and $\calB$ be linear subspaces, respectively, of $\Mat_{n,p}(\K)$ and $\Mat_{n,q}(\K)$.

Given maps $f : \calA \rightarrow \K^n$ and $g : \calB \rightarrow \K^n$,
set
$$f \coprod g : \begin{bmatrix}
A & B
\end{bmatrix} \in \calA \coprod \calB \longmapsto f(A)+g(B).$$
Then:
\begin{enumerate}[(a)]
\item The homomorphisms from $\calA \coprod \calB$ to $\K^n$
are the maps of the form $f \coprod g$, where $f \in \Hom(\calA,\K^n)$ and $g \in \Hom(\calB,\K^n)$.
\item Given $f \in \Hom(\calA,\K^n)$ and $g \in \Hom(\calB,\K^n)$, the homomorphism
$f \coprod g$ is range-compatible (respectively, local) if and only if $f$ and $g$ are range-compatible (respectively, local).
\end{enumerate}
\end{lemma}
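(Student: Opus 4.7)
The plan is to prove (a) and (b) separately, both resting on the single observation that every element of $\calA \coprod \calB$ decomposes canonically as $[A \mid B] = [A \mid 0] + [0 \mid B]$, so that $\calA \coprod \calB$ is internally the direct sum of the subgroups $\calA \times \{0\}$ and $\{0\} \times \calB$. There is no real obstacle in this lemma; it is a routine verification, and the only thing to be careful about is matching the right ``source vector'' and the right ``column space'' on each side.

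For (a), one direction is immediate: if $f$ and $g$ are homomorphisms, then $f \coprod g$ is a sum of two homomorphisms precomposed with the two projections $\calA \coprod \calB \to \calA$ and $\calA \coprod \calB \to \calB$, hence is a homomorphism. For the converse, I would take a homomorphism $H : \calA \coprod \calB \to \K^n$ and define $f(A) := H([A \mid 0])$ and $g(B) := H([0 \mid B])$. Both $f$ and $g$ are homomorphisms since they are restrictions of $H$ to subgroups, and $H([A \mid B]) = H([A \mid 0]) + H([0 \mid B]) = f(A)+g(B) = (f \coprod g)([A \mid B])$, so $H = f \coprod g$. Uniqueness of the decomposition is automatic because specializing to $B = 0$ forces $f$, and specializing to $A = 0$ forces $g$.

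For (b), the range-compatibility statement hinges on the equality $\im [A \mid B] = \im A + \im B$ (column spaces add). If $f$ and $g$ are range-compatible, then $(f \coprod g)([A \mid B]) = f(A) + g(B) \in \im A + \im B = \im [A \mid B]$. Conversely, if $f \coprod g$ is range-compatible, then applying it to $[A \mid 0]$ gives $f(A) \in \im [A \mid 0] = \im A$, and symmetrically $g(B) \in \im B$. For locality, the key is that the source space of $\calA \coprod \calB$ is $\K^p \oplus \K^q$. If $f$ is local via $x \in \K^p$ and $g$ is local via $y \in \K^q$, then $f(A)+g(B) = Ax + By = [A \mid B]\begin{bmatrix}x\\ y\end{bmatrix}$, so $f \coprod g$ is local via $(x,y)$. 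Conversely, if $f \coprod g$ is local via some $z \in \K^{p+q}$, write $z = \begin{bmatrix}x\\ y\end{bmatrix}$ with $x \in \K^p$ and $y \in \K^q$; evaluating at $[A \mid 0]$ yields $f(A) = Ax$ and at $[0 \mid B]$ yields $g(B) = By$, so both $f$ and $g$ are local. This exhausts all cases and the lemma follows.
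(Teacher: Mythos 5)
Your proof is correct and is exactly the routine verification one expects; the paper itself does not reprove this lemma but simply cites it from an earlier reference, and your argument (decomposing $[A\mid B]=[A\mid 0]+[0\mid B]$, using $\im[A\mid B]=\im A+\im B$, and splitting the source vector as $z=(x,y)\in\K^p\oplus\K^q$) is the standard proof. No gaps.
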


\section{Range-compatible homomorphisms on large spaces of symmetric matrices}\label{RCsym}

\subsection{Spaces of self-adjoint operators}

The main obstacle in trying to prove Theorem \ref{maintheosym} by induction is that, given a vector $y \in \K^n \setminus \{0\}$,
the operator space $\calS \modu y$ can no longer be represented by a space of symmetric matrices.
In order to circumvent that problem, we shall both broaden the result and frame it in terms of operators rather than in terms of matrices.

\begin{Def}
Let $U$ be a vector space and $V$ be a linear subspace of $U$.
A linear map $f : U \rightarrow V^\star$ is called \textbf{self-adjoint} when
$$\forall (x,y)\in V^2, \quad f(x)[y]=f(y)[x],$$
i.e.\ when the bilinear form $(x,y) \in V^2 \mapsto f(x)[y]$ is symmetric.
The set of all self-adjoint linear maps from $U$ to $V^\star$ is denoted by $\calL_s(U,V^\star)$.
One checks that it is a linear subspace of $\calL(U,V^\star)$.
\end{Def}

Let $(\bfB_1,\bfB_2^\star)$ be a compatible pair of bases of $U$ and $U^\star$.
A linear map from $U$ to $U^\star$ is self-adjoint if and only if its matrix in $\bfB_1$ and $\bfB_2^\star$
is symmetric. Hence, we have a vector space isomorphism $f \in \calL_s(U,U^\star) \longmapsto M_{\bfB_1,\bfB_2^\star}(f) \in \Mats_n(\K)$,
where $n:=\dim U$.
More generally, given a compatible pair $(\bfB_1,\bfB_2^\star)$ of bases of $U$ and $V^\star$,
a linear map $f : U \rightarrow V^\star$ is self-adjoint if and only if its matrix in the bases $\bfB_1$
and $\bfB_2^\star$ belongs to $\Mats_n(\K) \coprod \Mat_{n,p-n}(\K)$, where $n:=\dim V$ and $p:=\dim U$.

\begin{Rem}\label{Rem4}
Let $W$ be a linear subspace of $V^\star$.
Then, through the canonical identification between $V^\star/W$ and $({}^o W)^\star$,
the space $\calL_s(U,V^\star) \modu W$ corresponds to $\calL_s(U,({}^o W)^\star)$.

In particular, $\calL_s(U,V^\star)$ corresponds to $\calL_s(U,U^\star) \modu V^o$.

Note these identifications have no impact on the problems under scrutiny, which follows from the
following general observation: given a linear subspace $\calS$ of $\calL(U_1,U_2)$ (where $U_1$ and $U_2$ are arbitrary vector spaces)
, a vector space isomorphism $\varphi : U_2 \overset{\simeq}{\rightarrow} U'_2$
and a group homomorphism $F : \calS \rightarrow U_2$, the linear subspace $\calS':=\{\varphi \circ s \mid s \in \calS\}$ of $\calL(U_1,U'_2)$
is isomorphic to $\calS$ through $\Phi : s \mapsto \varphi \circ s$, and $F':=\varphi \circ F \circ \Phi^{-1}$ is a group homomorphism from $\calS'$ to $U'_2$; one checks that $F$ is range-compatible (respectively, local) if and only if $F'$ is range-compatible (respectively, local).
\end{Rem}

Combining the splitting lemma and Theorems \ref{maintheolin1} and \ref{fullsymmetric},
we can easily describe the general form of range-compatible homomorphisms on $\Mats_n(\K) \coprod \Mat_{n,p}(\K)$
for arbitrary integers $n \geq 2$ and $p \geq 0$. The interpretation in terms of operators then reads as follows.

\begin{prop}\label{operatorsymprop}
Let $U$ be a finite-dimensional vector space and $V$ be a linear subspace of $U$ with $\dim V \geq 2$.
Let $\bigl((e_1,\dots,e_p),(e_1^\star,\dots,e_n^\star)\bigr)$ be a compatible pair of bases of $U$ and $V^\star$.
\begin{enumerate}[(a)]
\item If $\K$ does not have characteristic $2$ then every range-compatible homomorphism on $\calL_s(U,V^\star)$ is local.
\item If $\K$ has characteristic $2$ then every range-compatible homomorphism on $\calL_s(U,V^\star)$ splits in a unique fashion as the sum
of a local map and of $s \mapsto \underset{k=1}{\overset{n}{\sum}} \alpha(s(e_k)[e_k])\,e_k^\star$
for some root-linear form $\alpha$ on $\K$.
\end{enumerate}
\end{prop}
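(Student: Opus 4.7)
The plan is to use the compatible pair of bases to transport the problem into matrix language, invoke the Splitting Lemma to decouple the symmetric and rectangular blocks, and then apply the already-established classifications on each block. Setting $n:=\dim V$ and $p:=\dim U$, the chosen bases identify $\calL_s(U,V^\star)$ with $\Mats_n(\K) \coprod \Mat_{n,p-n}(\K)$; by the transport principle noted in Remark \ref{Rem4}, range-compatibility and locality are preserved under this isomorphism, so it suffices to describe the range-compatible homomorphisms on $\Mats_n(\K) \coprod \Mat_{n,p-n}(\K)$ and then translate the answer back.

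Any such range-compatible homomorphism decomposes, via Lemma \ref{splittinglemma}, as $f \coprod g$ where $f$ is a range-compatible homomorphism on $\Mats_n(\K)$ and $g$ is one on $\Mat_{n,p-n}(\K)$. Theorem \ref{fullsymmetric} handles $f$: in characteristic not $2$ it is local, while in characteristic $2$ it is the sum of a local map and a unique map $M \mapsto \Delta(M)^\alpha$ with $\alpha$ root-linear. For $g$, the space $\Mat_{n,p-n}(\K)$ is the \emph{full} space $\calL(\K^{p-n},\K^n)$, so its codimension is $0 \leq n-2$ (since $n \geq 2$), and Theorem \ref{maintheogroup} yields that $g$ is local. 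Translating back, the two local summands merge into a single local map $s \mapsto s(x)$ on $\calL_s(U,V^\star)$, where $x \in U$ has its $\bfB_1$-coordinates obtained by concatenating the vectors coming from $f$ and from $g$; the diagonal term $\Delta(M)^\alpha$ corresponds to $s \mapsto \underset{k=1}{\overset{n}{\sum}} \alpha\bigl(s(e_k)[e_k]\bigr)\,e_k^\star$, because the $(k,k)$-entry of the matrix of $s$ in the chosen bases is exactly $s(e_k)[e_k]$.

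For the uniqueness in (b), I would assume $s \mapsto s(x) - \underset{k=1}{\overset{n}{\sum}} \alpha(s(e_k)[e_k])\,e_k^\star$ vanishes identically and test on carefully chosen operators: the symmetric off-diagonal operators with matrices $E_{i,j}+E_{j,i}$ for $1 \leq i<j \leq n$ kill the diagonal summand and force the first $n$ coordinates of $x$ to vanish; the operators with matrices $E_{i,j}$ for $i \leq n<j$ force the remaining coordinates of $x$ to vanish; finally, testing on $\lambda E_{k,k}$ yields $\alpha(\lambda)=0$ for every $\lambda \in \K$. There is no substantive obstacle here: once the dictionary between matrices and self-adjoint operators is set up through a compatible pair of bases, the proposition reduces cleanly to a bookkeeping exercise on top of Theorems \ref{fullsymmetric} and \ref{maintheogroup}.
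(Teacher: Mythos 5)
Your proposal is correct and takes essentially the same route as the paper, which obtains this proposition precisely by combining the Splitting Lemma with Theorem \ref{fullsymmetric} on the $\Mats_n(\K)$ block and the codimension-zero case of Theorem \ref{maintheogroup} on the $\Mat_{n,p-n}(\K)$ block, then transporting back through the compatible pair of bases. Your explicit uniqueness check (testing on $E_{i,j}+E_{j,i}$, on $E_{i,j}$ with $j>n$, and on $\lambda E_{k,k}$) is sound and merely spells out a detail the paper leaves implicit.
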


As a corollary, we obtain the following result.

\begin{cor}
Let $U$ be a finite-dimensional vector space and $V$ be a linear subspace of $U$ with $\dim V \geq 2$.
Then, every range-compatible homomorphism on $\calL_s(U,V^\star)$ can be written as
$G \modu V^o$ for some range-compatible homomorphism $G$ on $\calL_s(U,U^\star)$.
\end{cor}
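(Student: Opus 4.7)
My plan is to use Proposition \ref{operatorsymprop} to put $F$ in a canonical form and then to construct an explicit $G$ on $\calL_s(U,U^\star)$ of the analogous form such that $G \modu V^o = F$. First, the proposition (applied to the pair $(U,V)$) supplies a vector $x \in U$ and a root-linear form $\alpha : \K \to \K$ (taken to be zero unless $\operatorname{char}\K = 2$) such that
$$F(s) = s(x) + \sum_{k=1}^{n} \alpha(s(e_k)[e_k])\, e_k^\star \qquad \text{for every } s \in \calL_s(U,V^\star).$$
The task is then to produce a range-compatible homomorphism $G$ on the full space whose push-forward under the projection $\pi : U^\star \to U^\star/V^o = V^\star$ recovers this expression.

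Next, I would complete the given compatible pair $\bigl((e_1,\dots,e_p),(e_1^\star,\dots,e_n^\star)\bigr)$ by introducing the basis $(f_1^\star,\dots,f_p^\star)$ of $U^\star$ dual to $(e_1,\dots,e_p)$. The pivotal observation is that $\pi(f_k^\star) = e_k^\star$ for $k \leq n$ and $\pi(f_k^\star) = 0$ for $k > n$, the latter because $f_k^\star$ then vanishes on $V$ and therefore lies in $V^o$. I would then \emph{define}
$$G(t) := t(x) + \sum_{k=1}^{p} \alpha(t(e_k)[e_k])\, f_k^\star \qquad \text{for every } t \in \calL_s(U,U^\star),$$
and appeal to Proposition \ref{operatorsymprop} applied with $V=U$ (legitimate because $\dim U \geq \dim V \geq 2$) to certify that $G$ is a range-compatible homomorphism on $\calL_s(U,U^\star)$. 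A short calculation, using the identification $\pi(\phi)[v] = \phi(v)$ for $\phi \in U^\star$ and $v \in V$, will then yield $\pi(G(t)) = F(\pi \circ t)$ for every $t \in \calL_s(U,U^\star)$; the uniqueness part of Lemma \ref{quotientgeneral} then delivers $G \modu V^o = F$.

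The one non-routine choice is the upper summation bound in the definition of $G$. A naive lift that stops at $k=n$ is not range-compatible on $\calL_s(U,U^\star)$: the truncated diagonal map $M \mapsto (\alpha(m_{11}),\dots,\alpha(m_{nn}),0,\dots,0)^T$ already fails range-compatibility on $\Mats_p(\K)$ as soon as $n<p$, as a rank-one example such as the all-ones $2 \times 2$ matrix shows. Extending the sum all the way to $k=p$ repairs this at no cost at the level of the quotient, because the extra terms $\alpha(t(e_k)[e_k])\,f_k^\star$ with $k>n$ are killed by $\pi$; at the same time the full sum is precisely the shape that Proposition \ref{operatorsymprop} recognises as range-compatible on the full space. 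This double role of the extension is the crux of the proof.
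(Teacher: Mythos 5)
Your proposal is correct and follows essentially the same route as the paper: decompose $F$ via Proposition \ref{operatorsymprop} into a local part plus the root-linear diagonal part, lift the local part trivially, and lift the diagonal part by extending the sum over the full dual basis $(f_1^\star,\dots,f_p^\star)$ of $(e_1,\dots,e_p)$, using that $f_k^\star \in V^o$ for $k>n$ and $(f_k^\star)_{|V}=e_k^\star$ for $k\leq n$. Your remark on why the summation must run to $p$ rather than $n$ is a useful elaboration of a point the paper leaves implicit.
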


\begin{proof}
Let $F$ be a range-compatible homomorphism on $\calL_s(U,V^\star)$.
If $F$ is local, we have $F : s \mapsto s(x)$ for some $x \in U$ and then it is clear that $F=G \modu V^o$
for $G : s \in \calL_s(U,U^\star) \mapsto s(x)$.

Now, assume that $\K$ has characteristic $2$.
Let $(e_1^\star,\dots,e_n^\star)$ be a basis of $V^\star$, with pre-dual basis $(e_1,\dots,e_n)$ of $V$,
which we extend into a basis $(e_1,\dots,e_p)$ of $U$.
Denote by $(f_1^\star,\dots,f_p^\star)$ the dual basis of $(e_1,\dots,e_p)$.
One sees that $f_{n+1}^\star,\dots,f_p^\star$ belong to $V^o$, whereas $(f_i^\star)_{|V}=e_i^\star$ for all $i \in \lcro 1,n\rcro$.

By Proposition \ref{operatorsymprop},
the map $F$ splits as $F_1+F_2$ with some local map $F_1 : s \mapsto s(x)$
and with $F_2 : s \mapsto \underset{k=1}{\overset{n}{\sum}} \alpha(s(e_k)[e_k])\, e_k^\star$ for some root-linear form $\alpha$ on $\K$.
Setting $G_2 : s \in \calL_s(U,U^\star) \mapsto \underset{k=1}{\overset{p}{\sum}} \alpha(s(e_k)[e_k])\,f_k^\star$,
we check that $G_2 \modu V^o=F_2$, whereas $G_1 \modu V^o=F_1$ for $G_1 : s \in \calL_s(U,U^\star) \mapsto s(x)$.
Hence, $F=(G_1+G_2) \modu V^o$ and $G_1+G_2$ is range-compatible.
\end{proof}

Now, we can extend the definition of standard maps as follows.

\begin{Def}
Let $\calS$ be a linear subspace of $\calL_s(U,V^\star)$.
A range-compatible homomorphism on $\calS$ is called \textbf{standard} when
it can be extended into a range-compatible homomorphism on the full space $\calL_s(U,V^\star)$.
\end{Def}

In particular, when $\K$ has characteristic not $2$ the standard range-compatible homomorphisms are simply the local ones.
Note that in any case the set of all standard range-compatible homomorphisms on $\calS$ is a linear subspace of $\Hom(\calS,V^\star)$.

We are now ready to state our main result on range-compatible homomorphisms on spaces of self-adjoint operators.

\begin{theo}\label{generalizedsymtheo}
Let $U$ be a finite-dimensional vector space, $V$ be a linear subspace of $U$, and $\calS$
be a linear subspace of $\calL_s(U,V^\star)$. Assume that $\codim \calS \leq \dim V-2$ (here, the codimension of $\calS$
is meant to be the one of $\calS$ in the space $\calL_s(U,V^\star)$, not in the full space $\calL(U,V^\star)$).
Then, every range-compatible homomorphism on $\calS$ is standard.
\end{theo}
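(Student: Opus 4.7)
The plan is to prove Theorem~\ref{generalizedsymtheo} by induction on $n := \dim V$, with the ambient vector space $U$ fixed throughout. The base case $n = 2$ is trivial: the hypothesis $\codim \calS \leq 0$ forces $\calS = \calL_s(U, V^\star)$, so every range-compatible homomorphism on $\calS$ is standard by definition (it extends to itself).

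For the inductive step with $n \geq 3$, I may assume $\calS \subsetneq \calL_s(U, V^\star)$, for otherwise the claim is immediate. The strategy is the classical quotient technique from Lemma~\ref{quotientgeneral}: choose a nonzero $y \in V^\star$ so that the codimension strictly drops in passing from $\calS$ to $\calS \modu y$, viewed inside $\calL_s(U, ({}^o(\K y))^\star)$ via Remark~\ref{Rem4}. This hinges on the fact that the rank-$1$ self-adjoint operators span $\calL_s(U, V^\star)$, verified by exhibiting explicit rank-$1$ generators: the operators $u \mapsto \lambda(u)(e_i^\star + e_j^\star)$ with $\lambda|_V$ proportional to $e_i^\star + e_j^\star$ produce $E_{ii} + E_{ij} + E_{ji} + E_{jj}$ in matrix form, which combined with the $E_{ii}$ obtained from $u \mapsto \lambda(u) e_i^\star$ yield all $E_{ij} + E_{ji}$, while analogous operators generate the $\Mat_{n, p-n}$ entries. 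Hence some rank-$1$ self-adjoint operator $s_0$ lies outside $\calS$, and setting $\K y := \im s_0$ one gets $\ker \pi_{*, y} \not\subseteq \calS$ where $\ker \pi_{*, y}$ denotes the set of self-adjoint operators with image in $\K y$. A short dimension count using $\dim(\ker \pi_{*, y}) = \dim U - n + 1$ then yields $\codim(\calS \modu y) \leq \codim \calS - 1 \leq n - 3 = \dim({}^o(\K y)) - 2$, so the inductive hypothesis applies to $F \modu y$.

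By induction, $F \modu y$ is standard. Since $\dim({}^o(\K y)) = n - 1 \geq 2$, Proposition~\ref{operatorsymprop} characterizes its extension as a local map plus, in characteristic $2$, a root-linear contribution. One lifts this extension to a range-compatible homomorphism $G$ on $\calL_s(U, V^\star)$ by completing a basis $(e_1, \dots, e_{n-1})$ of ${}^o(\K y)$ to a basis $(e_1, \dots, e_n)$ of $V$ and applying the same root-linear form to the $n$-th diagonal entry as well; one checks that $G \modu y$ agrees with the chosen extension of $F \modu y$. Consequently $F - G|_\calS$ takes values in $\K y$, and one writes $F = G|_\calS + \psi \cdot y$ for a group homomorphism $\psi : \calS \to \K$, with $\psi(s) y \in \im s$ for every $s \in \calS$ forced by range-compatibility of both $F$ and $G$.

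The main obstacle is the final step: showing that $\psi \cdot y$ is itself the restriction to $\calS$ of a range-compatible homomorphism of standard form on $\calL_s(U, V^\star)$, for this would complete the proof via the decomposition $F = G|_\calS + \psi \cdot y$. Concretely, this amounts to exhibiting $v \in U$ and a root-linear $\gamma : \K \to \K$ (zero in characteristic $\neq 2$) with $\psi(s) y = s(v) + \sum_{k=1}^{n} \gamma(s(e_k)[e_k]) e_k^\star$ for every $s \in \calS$. My approach is to exploit the constraint $\psi(s) y \in \im s$ (which forces $\psi(s) = 0$ on every $s \in \calS$ whose image avoids $y$) together with a second run of the quotient argument for a different distinguished line $\K y' \subseteq V^\star$ linearly independent from $\K y$ (obtained by a more refined application of the rank-$1$ spanning argument), yielding a parallel decomposition $F = G'|_\calS + \psi' \cdot y'$. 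Comparing the two decompositions via the identity $\psi \cdot y - \psi' \cdot y' = (G' - G)|_\calS$, whose right-hand side is a difference of standard-form maps on the full space, lets one read off the structure of $\psi \cdot y$. I expect the characteristic-$2$ case to be the genuine difficulty here: the interplay between the root-linear forms produced from the two quotient choices must be carefully tracked and reconciled, which is consistent with the author's remark that the characteristic-$2$ case is the main source of technical complications in the symmetric setting.
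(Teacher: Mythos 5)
Your induction framework, base case, and the reduction via a single quotient $\calS \modu y$ (with the rank-one spanning argument to guarantee $\codim(\calS \modu y) \leq \codim \calS - 1$) are all sound and broadly consistent with the paper's strategy. The lifting of the standard extension of $F \modu y$ to a standard map $G$ on $\calL_s(U,V^\star)$ also works, exactly as in the paper's corollary to Proposition \ref{operatorsymprop}. The problem is that everything up to the decomposition $F = G|_{\calS} + \psi\cdot y$ is the easy part; the entire substance of the theorem lies in the step you defer, namely showing that a range-compatible homomorphism $\calS \to \K y$ is standard, and your proposed mechanism for it does not close the gap. From the two decompositions you get $\psi\cdot y - \psi'\cdot y' = (G'-G)|_{\calS}$ with $G'-G$ of standard form, say $s \mapsto s(v) + \sum_k \gamma(s(e_k)[e_k])\,e_k^\star$. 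Evaluating against a vector of $V$ that separates $y$ from $y'$ only tells you that $\psi(s) = s(v)[\ell] + (\text{root-linear diagonal terms})$ for some $\ell \in V$; the resulting map $s \mapsto s(v)[\ell]\,y$ is a rank-one compression which is in general neither local nor standard. Knowing that a \emph{difference} of two line-valued maps is standard does not make either summand standard, so the comparison identity alone proves nothing; you must still exploit the constraint $\psi(s)\,y \in \im s$ in an essential way.

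That exploitation is exactly what the paper's Lemma \ref{twovectorssymlemma} does, and it is genuinely hard: after normalizing so that $F$ takes values in $\K f_1^\star$ and $F(s) = s(x) \bmod \K f_2^\star$, one must rule out $x \neq 0$ by a three-way case analysis ($x \notin V$; $x \in {}^o\Vect(f_1^\star,f_2^\star)\setminus\{0\}$; $x \in V \setminus {}^o\Vect(f_1^\star,f_2^\star)$), in each case using the codimension hypothesis to force the matrix space to be an explicit large space and then exhibiting a concrete matrix whose range fails to contain the prescribed value. None of this appears in your proposal. Two further specific omissions: (i) in characteristic $2$ with $\#\K>2$ one needs a separate argument (the paper's first Claim) showing that the second quotient map is actually \emph{local} and not merely standard, which uses that no nonzero map is both linear and root-linear; (ii) for $\#\K=2$ that argument collapses (root-linear equals linear), and the paper must replace the two good forms by \emph{three} (Lemma \ref{existthreevectorssymF2}), handle an exceptional space $\calT_3(\K)$ in dimension $3$, and run a pigeonhole on which two of the three quotients behave alike. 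Your two-line comparison has no counterpart for these cases. So the proposal is a correct skeleton with the decisive lemma missing.
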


The special case $U=V$ is simply Theorem \ref{maintheosym}.

The proof will require the following additional notation and remarks.
Let $\calS$ be a linear subspace of $\calL_s(U,V^\star)$.
To $\calS$, we can attach two operator spaces:
\begin{itemize}
\item The space
$$\calS_r:=\bigl\{f_{|V} \mid f \in \calS\bigr\},$$
which is a linear subspace of $\calL_s(V,V^\star)$ (the subscript ``r" stands for ``restricted");
\item And the space of all $f \in \calS$ that vanish everywhere on $V$, which we denote by $\calS_m$
and naturally reinterpret as a linear subspace of $\calL(U/V, V^\star)$ (the subscript ``m" stands for ``modulo").
\end{itemize}
By the rank theorem, we have
$$\dim \calS=\dim \calS_r+\dim \calS_m,$$
and on the other hand
$$\dim \calL_s(U,V^\star)=\dim \calL_s(V,V^\star)+\dim \calL(U/V,V^\star).$$

Let $(\bfB_1,\bfB_2^\star)$ be a compatible pair of bases of $U$ and $V^\star$, and denote by $\calM$ the matrix space representing
$\calS$ in them. Write $\bfB_1=(e_1,\dots,e_p)$ and set $\bfB'_1:=(e_1,\dots,e_n)$ and $\bfB''_1:=(\overline{e_{n+1}},\dots,\overline{e_p})$,
which are bases of $V$ and $U/V$, respectively.
Every $M \in \calM$ splits as
$$M=\begin{bmatrix}
S(M) & R(M)
\end{bmatrix} \quad \text{with $S(M) \in \Mats_n(\K)$ and $R(M) \in \Mat_{n,p-n}(\K)$.}$$
Then, the matrix space $S(\calM)$ represents $\calS_r$ in the bases $\bfB'_1$ and $\bfB_1^\star$.
On the other hand, if we denote by $\calT$ the subspace of $\calM$ consisting of all its matrices $M$ such that $S(M)=0$,
then $R(\calT)$ represents $\calS_m$ in the bases $\bfB''_1$ and $\bfB_1^\star$.

\subsection{Basic lemmas}

\begin{lemma}\label{rank1lemma}
Let $U$ be a finite-dimensional vector space with $\dim U \geq 3$.
Let $W$ be a proper linear subspace of $\calL_s(U,U^\star)$.
Then, there exist non-collinear vectors $f_1^\star$ and $f_2^\star$ of $U^\star$
such that $W$ contains no operator with range $\K f_1^\star$ and no operator with range $\K f_2^\star$.
\end{lemma}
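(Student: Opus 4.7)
My plan is to translate the statement into a condition about the \emph{squaring} map $\rho : U^\star \to \calL_s(U,U^\star)$, $f^\star \mapsto \bigl(x \mapsto f^\star(x)\,f^\star\bigr)$. A nonzero rank-$1$ self-adjoint operator $u : U \to U^\star$ has the form $u : x \mapsto \lambda(x)\,g^\star$ for some $\lambda \in U^\star$ and $g^\star \in U^\star \setminus \{0\}$; self-adjointness forces $\lambda(x)\,g^\star(y) = \lambda(y)\,g^\star(x)$ for all $x,y \in U$, which (upon picking $y$ with $g^\star(y) \neq 0$) implies that $\lambda$ is a scalar multiple of $g^\star$. Consequently, the operators of $\calL_s(U,U^\star)$ with range exactly $\K f^\star$ are precisely the $c\,\rho(f^\star)$ with $c \in \K \setminus \{0\}$, and since $W$ is a linear subspace, $W$ contains such an operator if and only if $\rho(f^\star) \in W$. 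Setting $Z := \{f^\star \in U^\star \mid \rho(f^\star) \in W\}$, the lemma reduces to showing that $U^\star \setminus Z$ contains two linearly independent vectors.

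I argue by contradiction. Otherwise, $U^\star \setminus Z$ is contained in a single line $\K f^\star$, so $\rho(g^\star) \in W$ for every $g^\star \notin \K f^\star$, and my aim becomes to derive $W = \calL_s(U,U^\star)$. For any such $g^\star$, the vector $g^\star + f^\star$ also avoids $\K f^\star$, so
$$\Phi(g^\star) := \rho(g^\star + f^\star) - \rho(g^\star) : \; x \longmapsto g^\star(x)\,f^\star + f^\star(x)\,g^\star + f^\star(x)\,f^\star$$
lies in $W$. Subtracting $\Phi(g'^\star)$ from $\Phi(g^\star)$ for two vectors $g^\star, g'^\star \notin \K f^\star$ cancels the $f^\star(x)\,f^\star$ summand and produces the ``polarized'' operator $x \mapsto (g^\star - g'^\star)(x)\,f^\star + f^\star(x)\,(g^\star - g'^\star)$ in $W$.

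The key combinatorial step, and the main delicate point of the proof, is to show that every nonzero $h^\star \in U^\star$ can be written as $g^\star - g'^\star$ with both $g^\star$ and $g'^\star$ outside $\K f^\star$; this amounts to finding $g'^\star$ outside $\K f^\star \cup (-h^\star + \K f^\star)$. For $|\K| \geq 3$, the bound $\dim U \geq 2$ already suffices by a counting argument; for $|\K| = 2$ the forbidden union has at most $4$ elements while $|U^\star| \geq 2^3 = 8$, and this is the only place where the hypothesis $\dim U \geq 3$ is invoked. It follows that for every $h^\star \in U^\star$ the operator $x \mapsto h^\star(x)\,f^\star + f^\star(x)\,h^\star$ lies in $W$; substituting back into the expression of $\Phi(g^\star)$ then gives $\rho(f^\star) \in W$, and hence $\rho(g^\star) \in W$ for every $g^\star \in U^\star$.

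To conclude, I invoke the classical fact (valid in every characteristic) that the rank-$1$ symmetric matrices $v\,v^T$ span $\Mats_n(\K)$: the identities $e_i\,e_i^T = E_{i,i}$ and $(e_i + e_j)(e_i + e_j)^T - e_i\,e_i^T - e_j\,e_j^T = E_{i,j} + E_{j,i}$ exhibit a spanning family. Translated through a basis of $U$ and its dual basis, this shows that $\{\rho(g^\star) \mid g^\star \in U^\star\}$ spans $\calL_s(U,U^\star)$, whence $W = \calL_s(U,U^\star)$, contradicting that $W$ is proper.
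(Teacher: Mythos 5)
Your proof is correct. It shares the overall skeleton of the paper's argument: both proceed by contradiction, reduce the negation to the statement that $W$ contains a rank-one self-adjoint operator $\rho(g^\star):x\mapsto g^\star(x)g^\star$ for every $g^\star$ outside a single line $\K f^\star$, and conclude by showing that this forces $W=\calL_s(U,U^\star)$ via the standard spanning family $\{E_{i,i}\}\cup\{E_{i,j}+E_{j,i}\}$ of $\Mats_n(\K)$. Where you diverge is in the execution of that middle step. The paper normalizes coordinates so that $f^\star$ has all coordinates equal to $1$ in the dual basis, and then simply observes that the vectors $e_i$ and $e_i+e_j$ all avoid $\K X_0$ when $n\geq 3$, so the needed rank-one matrices $e_ie_i^T$ and $(e_i+e_j)(e_i+e_j)^T$ are already available in $W$. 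You instead argue basis-free: you polarize, using $\rho(g^\star+f^\star)-\rho(g^\star)$ to produce the symmetrized products $h^\star\otimes f^\star+f^\star\otimes h^\star$, and you handle the fact that only $g^\star\notin\K f^\star$ are admissible by a coset-covering argument (a vector space is not the union of $\K f^\star$ and one of its translates, the case $\#\K=2$ being exactly where $\dim U\geq 3$ is needed); this even recovers $\rho(f^\star)$ itself, which the paper's route never needs. The paper's basis trick is shorter; your version isolates more transparently where the hypothesis $\dim U\geq 3$ enters and shows it is only essential over $\F_2$. Both are complete and elementary, so this is a legitimate alternative proof rather than a gap.
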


\begin{proof}
Assume on the contrary that there exists a non-zero linear form $f^\star \in U^\star$ such that
$W$ contains a rank $1$ operator with range $g^\star$ for all $g^\star \in U^\star \setminus \K f^\star$.
We can find a basis $\bfB$ of $U$ such that the coordinates of $f^\star$ in the dual basis $\bfB^\star$
are all equal to $1$. Denote by $\calM$ the matrix space that represents $W$ in the bases $\bfB$ and $\bfB^\star$.
Then, $\calM$ should be a proper linear subspace of $\Mats_n(\K)$ where $n:=\dim U$.
Set $X_0:=\begin{bmatrix}
1 & \cdots & 1
\end{bmatrix}^T \in \K^n$. For all $X \in \K^n \setminus \K X_0$,
we know that $\calM$ contains $a XX^T$ for some $a \in \K \setminus \{0\}$, and hence $\calM$ actually contains $XX^T$
for all such $X$.
In particular, as $n \geq 3$, we see that $\calM$ contains $E_{i,i}$ for all $i \in \lcro 1,n\rcro$, and it also contains
$E_{i,i}+E_{j,j}+E_{i,j}+E_{j,i}$ for all $i$ and $j$ in $\lcro 1,n\rcro^2$ with $i \neq j$.
Hence, it contains $E_{i,j}+E_{j,i}$ for all $i$ and $j$ in $\lcro 1,n\rcro^2$ with $i \neq j$.
By linearly combining those special matrices, we obtain that $\calM=\Mats_n(\K)$, contradicting the fact that
$\calM$ is a proper subspace of $\Mats_n(\K)$.

Hence, for all $f^\star \in U^\star$, there exists $g^\star \in U^\star \setminus \K f^\star$
such that $W$ contains no operator with range $\K g^\star$.
Starting from $f^\star=0$, we obtain $f_1^\star \in U^\star \setminus \{0\}$ such that $W$ contains no operator with range $\K f_1^\star$,
and then we apply the result once more to find some $f_2^\star \in U^\star \setminus \K f_1^\star$ such that
$W$ contains no operator with range $\K f_2^\star$.
\end{proof}

\begin{lemma}\label{existtwovectorssym}
Let $U$ be a vector space, $V$ be a linear subspace of $U$, and $\calS$
be a linear subspace of $\calL_s(U,V^\star)$. Assume that $\codim \calS \leq \dim V-2$ and $\dim V \geq 3$.
Then, there are non-collinear vectors $f^\star$ and $g^\star$ in $V^\star$ such that
$\codim (\calS \modu f^\star) \leq \dim V-3$ and $\codim (\calS \modu g^\star) \leq \dim V-3$.
\end{lemma}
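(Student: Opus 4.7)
Set $n:=\dim V$. The plan is to analyse the codimension drop via the kernel
$$\calL_{f^\star}:=\bigl\{s \in \calL_s(U,V^\star) : \im s \subseteq \K f^\star\bigr\}$$
of the natural map $\calL_s(U,V^\star)\to \calL_s(U,(\Ker f^\star)^\star)$ induced by passage to the quotient. This map is surjective (a direct lifting computation, since self-adjointness is constrained only on $V$), so the exact sequence $0\to \calS\cap \calL_{f^\star}\to \calS\to \calS \modu f^\star\to 0$ yields the identity
$$\codim(\calS \modu f^\star)=\codim \calS-\dim\bigl(\calL_{f^\star}/(\calS\cap \calL_{f^\star})\bigr)\leq \codim \calS.$$
Hence when $\codim \calS\leq n-3$, any two non-collinear nonzero vectors of $V^\star$ (available since $n\geq 3$) satisfy the conclusion. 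All the difficulty lies in the tight case $\codim \calS=n-2$, for which I need to exhibit non-collinear $f^\star,g^\star\in V^\star$ with $\calL_{f^\star}\not\subseteq \calS$ and $\calL_{g^\star}\not\subseteq \calS$.

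For that, I would split according to the decomposition $\codim \calS=\codim \calS_r+\codim \calS_m$. \emph{First case:} if $\calS_r\subsetneq \calL_s(V,V^\star)$, then, since $\dim V\geq 3$, Lemma \ref{rank1lemma} applied to $\calS_r$ inside $\calL_s(V,V^\star)$ furnishes non-collinear $f^\star,g^\star\in V^\star$ such that $\calS_r$ contains no operator with range $\K f^\star$ and none with range $\K g^\star$. In particular, the self-adjoint operator $x\mapsto f^\star(x)\,f^\star$ on $V$ does not lie in $\calS_r$, and its extension by zero on a complement of $V$ in $U$ yields a self-adjoint element of $\calL_{f^\star}\setminus \calS$ (otherwise its restriction to $V$ would belong to $\calS_r$); the same reasoning works for $g^\star$.

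\emph{Second case:} if $\calS_r=\calL_s(V,V^\star)$, then $\codim \calS_m=n-2\geq 1$, so $\calS_m$ is a proper subspace of $\calL(U/V,V^\star)$. I introduce the set
$$T:=\bigl\{f^\star\in V^\star : \text{every } t\in \calL(U/V,V^\star) \text{ with }\im t\subseteq \K f^\star \text{ lies in }\calS_m\bigr\}.$$
A direct check shows that $T$ is a linear subspace of $V^\star$, and the equality $T=V^\star$ would force $\calS_m$ to contain every rank-one operator in $\calL(U/V,V^\star)$ and therefore to coincide with the full space, a contradiction. Thus $T$ is proper in the $n$-dimensional space $V^\star$ (with $n\geq 3$), and I can pick $f^\star\notin T$ and then $g^\star:=f^\star+t$ for any $t\in T\setminus\{0\}$ (or any non-collinear $g^\star$ if $T=\{0\}$); since $\K f^\star\cap T=\{0\}$, the vectors $f^\star,g^\star$ are non-collinear and both lie outside $T$. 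For such an $f^\star$, some $t\in \calL(U/V,V^\star)$ with image in $\K f^\star$ fails to be in $\calS_m$, and lifting it to a self-adjoint map $U\to V^\star$ that vanishes on $V$ produces an element of $\calL_{f^\star}\setminus \calS$.

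The step I expect to require the most care is the second case: verifying that $T$ is actually a subspace of $V^\star$, and converting ``$t\notin \calS_m$'' into a genuine ``operator in $\calL_{f^\star}$ not in $\calS$'' via a lift that respects self-adjointness, which does hold because the lift may be chosen to vanish on $V$ and so imposes no self-adjointness constraint at all.
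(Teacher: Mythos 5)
Your proof is correct and follows essentially the same route as the paper: the same characterization of the "bad" vectors $f^\star$ as those with $\calL_{f^\star}\subseteq\calS$ (via the codimension identity for $\calS\modu f^\star$), the same decomposition into $\calS_r$ and $\calS_m$, and the same appeal to Lemma \ref{rank1lemma} when $\calS_r$ is proper. The only difference is organizational: the paper argues by contradiction (a basis of bad vectors forces $\calS_m$ to be full, hence $\calS_r$ proper), whereas you split directly on whether $\calS_r$ is full and, in that case, observe that the bad directions for $\calS_m$ form a proper linear subspace $T$ — a minor variant of the same argument.
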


\begin{proof}
Let $f^\star$ be a non-zero vector of $V^\star$. We say that $f^\star$ is $\calS$-good
if $\codim (\calS \modu f^\star) \leq \dim V-3$. Thus, $f^\star$ is not $\calS$-good if and only if
$\codim \calS=\dim V-2$ and $\calS$ contains every rank $1$ operator of $\calL_s(U,V^\star)$ with range $\K f^\star$.
Assume that we cannot find non-collinear $\calS$-good vectors. Then, we can find a basis $(f_1^\star,\cdots,f_n^\star)$ of $V^\star$
in which no vector is $\calS$-good. In particular, for all $i \in \lcro 1,n\rcro$,
the space $\calS$ contains every operator whose kernel includes $V$ and whose range is included in $\K f_i^\star$.
Summing such operators yields that $\calS_m=\calL(U/V,V^\star)$, whence
$$\codim \calS_r=\codim \calS=\dim V-2>0.$$
Applying Lemma \ref{rank1lemma}, we deduce that there are non-collinear vectors
$f^\star$ and $g^\star$ of $V^\star$ such that $\calS_r$ contains no operator whose range equals either $\K f^\star$
or $\K g^\star$. Then, $f^\star$ and $g^\star$ are both $\calS$-good, which completes the proof.
\end{proof}

\begin{lemma}\label{twovectorssymlemma}
Let $\calS$ be a linear subspace of $\calL_s(U,V^\star)$ such that $\codim \calS \leq \dim V-2$ and $\dim V \geq 3$.
Let $F : \calS \rightarrow V^\star$ be a range-compatible homomorphism.
Assume that there exist non-collinear vectors $f_1^\star$ and $f_2^\star$ in $V^\star$
such that both maps $F \modu f_1^\star$ and $F \modu f_2^\star$ are local.
Then, $F$ is local.
\end{lemma}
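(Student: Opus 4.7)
The goal is to exhibit a single local representative for $F$. From the hypotheses I extract vectors $x_1,x_2 \in U$ with $F(s)-s(x_i) \in \K f_i^\star$ for all $s \in \calS$, and define $\lambda_i : \calS \to \K$ (a priori only a group homomorphism) by $F(s) = s(x_i) + \lambda_i(s)\, f_i^\star$. Subtracting the two identities yields
\[
s(x_1 - x_2) = \lambda_2(s)\, f_2^\star - \lambda_1(s)\, f_1^\star \qquad (s \in \calS),
\]
whose left-hand side is linear in $s$; since $f_1^\star$ and $f_2^\star$ are linearly independent, this simultaneously forces $\lambda_1$ and $\lambda_2$ to be \emph{linear} forms on $\calS$ and yields $s(y) \in W := \K f_1^\star + \K f_2^\star$ for every $s \in \calS$, where $y := x_1 - x_2$. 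If $y = 0$, then $\lambda_1(s) f_1^\star = \lambda_2(s) f_2^\star$ combined with linear independence forces $\lambda_1 = \lambda_2 = 0$, and $F(s) = s(x_1)$ is local.

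Assume for contradiction that $y \neq 0$. A direct basis-level computation, splitting according to whether $y$ lies in $V$ or not (only in the former case does self-adjointness couple $s(y)$ to the corresponding ``column'' of $s|_V$, but $s(y)$ itself remains unconstrained), shows that the evaluation $s \in \calL_s(U,V^\star) \mapsto s(y) \in V^\star$ is surjective. Its preimage of $W$ therefore has codimension $\dim V - 2$ in $\calL_s(U,V^\star)$; since $\calS$ is contained in this preimage and $\codim \calS \leq \dim V - 2$, we deduce $\calS = \{s \in \calL_s(U,V^\star) \mid s(y) \in W\}$.

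The crux is now to find $s_0 \in \calS$ whose value $s_0(y)$ has a nonzero component along $f_1^\star$ in $W$, but whose image does not contain $f_1^\star$. For such an $s_0$ we have $\lambda_1(s_0) \neq 0$, so $F(s_0) = s_0(x_1) + \lambda_1(s_0) f_1^\star$ together with $F(s_0), s_0(x_1) \in \im s_0$ would force $f_1^\star \in \im s_0$, a contradiction. When $y \notin V$, a rank-one operator $s_0 = \phi \otimes (f_1^\star + f_2^\star)$ with $\phi \in U^\star$ vanishing on $V$ and satisfying $\phi(y) = 1$ works: self-adjointness is automatic and $\im s_0 = \K(f_1^\star + f_2^\star)$ misses $f_1^\star$. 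When $y \in V$ but some combination $\alpha f_1^\star + \beta f_2^\star$ with $\alpha,\beta \neq 0$ satisfies $(\alpha f_1^\star + \beta f_2^\star)(y) \neq 0$, a parallel rank-one construction succeeds. The genuinely delicate case, which I expect to be the main obstacle, is $y \in V \cap \Ker f_1^\star \cap \Ker f_2^\star$: one extends $(f_1^\star,f_2^\star)$ to a basis $(f_1^\star,\dots,f_n^\star)$ of $V^\star$ chosen so that $f_3^\star(y) \neq 0$, and builds a rank-two self-adjoint operator $s_0 = \phi_1 \otimes (f_1^\star + f_2^\star) + \phi_2 \otimes f_3^\star$ with $s_0(y) = f_1^\star + f_2^\star$ by solving the symmetry constraint on the bilinear form induced on $V$; the image $\Vect(f_1^\star + f_2^\star, f_3^\star)$ misses $f_1^\star$ because $f_3^\star \notin W$.
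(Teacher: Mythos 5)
Your overall strategy is sound and, at bottom, parallels the paper's: both proofs reduce to showing that a certain vector ($y=x_1-x_2$ for you, a single $x$ for the paper after normalizing $F\ \modu\ f_1^\star=0$) must vanish, and both split into cases according to the position of that vector relative to $V$ and to $\Ker f_1^\star\cap\Ker f_2^\star$. Your identification $\calS=\{s\in\calL_s(U,V^\star)\mid s(y)\in W\}$ with $W:=\Vect(f_1^\star,f_2^\star)$ is correct, your witness-operator mechanism (find $s_0\in\calS$ with $\lambda_1(s_0)\neq 0$ and $f_1^\star\notin\im s_0$) is a clean way to package the paper's ``find $M$ with $G(M)\notin\im M$'' contradictions, and your cases $y\notin V$ and $y\in V\cap\Ker f_1^\star\cap\Ker f_2^\star$ both check out.

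There is, however, a genuine hole in the case analysis, and it is exactly the sub-case the paper has to work hardest on. For $y\in V\setminus(\Ker f_1^\star\cap\Ker f_2^\star)$ you only handle the situation where some $\alpha f_1^\star+\beta f_2^\star$ with $\alpha\neq 0$ and $\beta\neq 0$ does not annihilate $y$. If $\#\K\geq 3$ such a combination always exists, but if $\#\K=2$ and $f_1^\star(y)=f_2^\star(y)=1$ the only admissible combination is $f_1^\star+f_2^\star$, which annihilates $y$; this configuration is covered neither by your second case nor by your third. Worse, in that configuration no rank-one witness exists at all: a rank-one self-adjoint $s=\phi\otimes g^\star$ with $s(y)\neq 0$ must have $g^\star\in W$ (since $s(y)\in\K g^\star\cap W$), the requirement $f_1^\star\notin\im s=\K g^\star$ then forces $g^\star=f_1^\star+f_2^\star$, and self-adjointness forces $\phi_{|V}$ to be proportional to $g^\star$, whence $s(y)=\phi(y)g^\star=0$. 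So the promised ``parallel rank-one construction'' provably fails, and one needs a rank-two witness whose image is \emph{not} contained in $W$. The paper supplies one explicitly: in a basis dual to $(f_1^\star,\dots,f_n^\star)$ it is the symmetric matrix with $m_{1,2}=1$, $m_{1,j}=m_{2,j}=1$ for $j\geq 3$ and all other entries zero, whose image is spanned by $f_2^\star+f_3^\star+\dots+f_n^\star$ and $f_1^\star+f_3^\star+\dots+f_n^\star$; this image contains $s(y)=f_1^\star+f_2^\star$ (so $\lambda_1(s)\neq 0$) but not $f_1^\star$, which restores your contradiction and uses $\dim V\geq 3$. Until this case is filled in, your argument is incomplete precisely for $\#\K=2$, one of the settings in which the lemma is later invoked.
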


\begin{proof}
Subtracting a local map from $F$, we lose no generality in assuming that
$F \modu f_1^\star=0$, in which case we have a vector $x \in U$ such that
$$\forall s \in \calS, \quad F(s)=s(x) \mod \K f_2^\star,$$
and
$$\forall s \in \calS, \; F(s) \in \K f_1^\star.$$
In particular, we note that
$$\forall s \in \calS, \quad s(x) \in \Vect(f_1^\star,f_2^\star).$$

Now, we show that $x \neq 0$ would lead to a contradiction.
We distinguish between three cases.

\vskip 3mm
\noindent \textbf{Case 1:} $x \not\in V$. \\
We choose a basis $\calB=(e_1,\dots,e_n)$ of $V$, whose dual basis we denote by $\bfB^\star$.
Then, $(e_1,\dots,e_n,x)$ can be extended into a basis $\widetilde{\bfB}$ of $U$.
We denote by $\calM$ the space of matrices representing $\calS$ in $\widetilde{\bfB}$ and $\bfB^\star$, and we
denote by $P$ the ($2$-dimensional) space of coordinates of the vectors of $\Vect(f_1^\star,f_2^\star)$ in $\bfB^\star$.
It follows from the above that $\calM$ is a subspace of $\calM':=\Mats_n(\K) \coprod P \coprod \Mat_{n,p-n-1}(\K)$.
However, $\dim \calM \geq \dim \calM'$, whence $\calM=\calM'$.
Denote by $G : \calM \rightarrow \K^n$ the map on matrices that corresponds to $F$ in the above bases.
Then, the range-compatible homomorphism $G' : P \rightarrow \K^n$ deduced from restricting
$G$ to $\{0\} \coprod P \coprod \{0\}$
has rank $1$! This is absurd as $G'$, being local by Theorem \ref{maintheogroup}, should be a scalar multiple of the identity of $P$.

\vskip 3mm
\noindent \textbf{Case 2:} $x \in {}^o \Vect(f_1^\star,f_2^\star)$ and $x \neq 0$. \\
In that case, we choose two vectors $e_1,e_2$ of $V$ such that $f_i^\star(e_j)=\delta_{i,j}$ for all $(i,j)\in \lcro 1,2\rcro^2$,
we note that $x \not\in \Vect(e_1,e_2)$, and we extend $(e_1,e_2)$ into a basis $\bfB=(e_1,\dots,e_n)$ of $V$ such that $e_n=x$ and all the vectors
$e_3,\dots,e_n$ belong to ${}^o \Vect(f_1^\star,f_2^\star)$. Then, the dual basis of $V^\star$ reads $(f_1^\star,f_2^\star,\dots,f_n^\star)$.
Finally, we extend $\bfB$ into a basis $\widetilde{\bfB}$ of $U$.
In the bases $\widetilde{\bfB}$ and $\bfB^\star$, the matrix space $\calM$ that corresponds to $\calS$
is included in $\calN \coprod \Mat_{n,p-n}(\K)$, where $\calN$ denotes the subspace of $\Mats_n(\K)$
consisting of the matrices $(s_{i,j})$ such that $s_{i,n}=0$ for all $i \in \lcro 3,n\rcro$.
Obviously, $\calN \coprod \Mat_{n,p-n}(\K)$ has codimension $n-2$ in $\Mats_n(\K) \coprod \Mat_{n,p-n}(\K)$,
whence $\calM=\calN \coprod \Mat_{n,p-n}(\K)$. Moreover, the map that corresponds to $F$ in those bases is
$$G : (m_{i,j}) \in \calM \longmapsto \begin{bmatrix}
m_{1,n} \\
[0]_{(n-1) \times 1}
\end{bmatrix}.$$
Define $M=(m_{i,j}) \in \calM$ by $m_{i,j}=1$ whenever $i \leq 2$ or $j \leq 2$, and
$m_{i,j}=0$ otherwise. Then, $G(M)=\begin{bmatrix}
1 \\
[0]_{(n-1) \times 1}
\end{bmatrix}$, but one easily checks that the range of $M$ is spanned by $\begin{bmatrix}
1 & 1 & 0 & \cdots & 0 \end{bmatrix}^T$
and $\begin{bmatrix}
0 & 0 & 1 & \cdots & 1
\end{bmatrix}^T$, and hence it is obvious that this range does not contain $G(M)$.

\vskip 3mm
\noindent \textbf{Case 3:} $x \in V \setminus \,{}^o\Vect(f_1^\star,f_2^\star)$. \\
We can extend $(f_1^\star,f_2^\star)$ into a basis $\bfB^\star=(f_1^\star,\dots,f_n^\star)$ of $V^\star$
such that $f_i^\star(x)=0$ for all $i \in \lcro 3,n\rcro$.
We denote by $(e_1,\dots,e_n)$ its dual basis of $V$, which we extend into a basis $\widetilde{\bfB}=(e_1,\dots,e_p)$ of $U$.
Note that $x \in \Vect(e_1,e_2)$, and hence there are scalars $\alpha$ and $\beta$ such that $x=\alpha\,e_1+\beta\,e_2$.
Now, denote by $\calM$ the space of matrices representing $\calS$ in $\widetilde{\bfB}$ and $\bfB^\star$.
Then, $\calM$ is included in $\calN_1 \coprod \Mat_{n,p-n}(\K)$, where $\calN_1$ denotes the subspace of $\Mats_n(\K)$
consisting of all the matrices $(s_{i,j})$ such that $\alpha\, s_{i,1}+\beta\,s_{i,2}=0$ for all $i \in \lcro 3,n\rcro$.

As $\codim \calS \leq n-2$, we see that $\dim (\calN_1 \coprod \Mat_{n,p-n}(\K)) \leq \dim \calM$, and hence
$\calM=\calN_1 \coprod \Mat_{n,p-n}(\K)$. Then, the map that corresponds to $F$
through the above bases reads
$$G : (s_{i,j}) \mapsto \begin{bmatrix}
\alpha\,s_{1,1}+\beta\,s_{1,2} \\
[0]_{(n-1) \times 1}
\end{bmatrix},$$
and hence it is range-compatible.
It follows from the splitting lemma that
$$G' : (s_{i,j})\in \calN_1 \mapsto \begin{bmatrix}
\alpha\,s_{1,1}+\beta\,s_{1,2} \\
[0]_{(n-1) \times 1}
\end{bmatrix}$$
is also range-compatible.

Applying $G'$ to any matrix of the form $\begin{bmatrix}
S & [0]_{2 \times (n-2)} \\
[0]_{(n-2) \times 2} & [0]_{(n-2) \times (n-2)}
\end{bmatrix}$, with $S \in \Mats_2(\K)$, yields that the linear map
$$\begin{bmatrix}
a & b \\
b & c
\end{bmatrix} \in \Mats_2(\K) \mapsto \begin{bmatrix}
\alpha a+\beta b \\
0
\end{bmatrix}$$
is range-compatible. Yet, since $(\alpha,\beta) \neq (0,0)$ it is obvious that this map is non-local.
Hence, by Theorem \ref{fullsymmetric} we have $\# \K=2$. Applying the above map to $\begin{bmatrix}
1 & 1 \\
1 & 1
\end{bmatrix}$ then yields $\alpha+\beta=0$, whence $\alpha=\beta=1$ (since $(\alpha,\beta) \neq (0,0)$).
A final contradiction then comes by applying $G'$ to the matrix $M=(m_{i,j}) \in \calN_1$ defined by
$m_{1,j}=m_{2,j}=m_{j,1}=m_{j,2}=1$ for all $j \in \lcro 3,n\rcro$, $m_{1,2}=1$, $m_{1,1}=m_{2,2}=0$ and $m_{i,j}=0$ for all
$(i,j)\in \lcro 3,n\rcro^2$. One checks that the range of $M$ is spanned by the vectors
$\begin{bmatrix}
0 \\
1 \\
1 \\
\vdots \\
1
\end{bmatrix}$ and $\begin{bmatrix}
1 \\
0 \\
1 \\
\vdots \\
1
\end{bmatrix}$ and that it does not include $\begin{bmatrix}
1 \\
[0]_{(n-1) \times 1}
\end{bmatrix}=G'(M)$. This contradicts the fact that $G'$ is range-compatible.

\vskip 3mm
In any of the above cases, we have found a contradiction. The only remaining possibility is that $x=0$. Then, $F$ is the local map $s \mapsto s(0)$.
\end{proof}

\subsection{Proof of Theorem \ref{generalizedsymtheo} for fields with characteristic not $2$}

Here, we assume that the characteristic of $\K$ is not $2$.

We prove Theorem \ref{generalizedsymtheo} by induction on the dimension of $V$.
The result is vacuous if $\dim V \leq 1$.
If $\dim V=2$, then the result is obvious because the assumptions tell us that $\calS=\calL_s(U,V^\star)$.
Assume now that $\dim V \geq 3$. By Lemma \ref{existtwovectorssym}, we can pick non-collinear vectors $f_1^\star$ and $f_2^\star$
in $V^\star$ such that $\codim (\calS \modu f_1^\star) \leq \dim V-3$ and $\codim (\calS \modu f_2^\star) \leq \dim V-3$.
However, we have a canonical identification of $\calS \modu f_i^\star$ with a linear subspace of
$\calL_s(U,(\Ker f_i^\star)^\star)$, and $\dim \Ker f_i^\star=\dim V-1$.
Thus, by induction, each map $F \modu f_i^\star$ is local. Lemma \ref{twovectorssymlemma}
then yields that $F$ is local. This completes the proof.

\subsection{Proof of Theorem \ref{generalizedsymtheo} for fields with characteristic $2$ and more than $2$ elements}

Here, we assume that $\K$ has characteristic $2$ and more than $2$ elements.
Again, we prove the result by induction on the dimension of $V$. The case when $\dim V \leq 2$
is either vacuous or trivial. Assuming that $\dim V \geq 3$ and considering an arbitrary range-compatible homomorphism
$F : \calS \rightarrow V^\star$, we proceed as in the above proof
to find linearly independent forms $f_1^\star$ and $f_2^\star$ in $V^\star$
such that both maps $F \modu f_1^\star$ and $F \modu f_2^\star$ are standard.
Then, $F \modu f_1^\star$ equals $F' \modu f_1^\star$ for some standard map $F'$ on $\calS$.
Thus, replacing $F$ with $F-F'$, we see that no generality is lost in assuming that
$$\forall s \in \calS, \quad F(s) \in \K f_1^\star.$$

\begin{claim}
The map $F \modu f_2^\star$ is local.
\end{claim}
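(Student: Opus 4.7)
The plan is to combine the rigid form of a standard map supplied by Proposition~\ref{operatorsymprop}(b) with the constraint $F(s)\in\K f_1^\star$ so as to force the root-linear part of $F\modu f_2^\star$ to vanish. To that end, I choose a basis $(e_1,\dots,e_n)$ of $V$ whose dual extends $(f_1^\star,f_2^\star)$, and extend it to a basis $(e_1,\dots,e_p)$ of $U$; then $(e_1,e_3,\dots,e_n)$ is a basis of $\Ker f_2^\star$ dual to $(\bar f_1^\star,\bar f_3^\star,\dots,\bar f_n^\star)$ in $(\Ker f_2^\star)^\star\simeq V^\star/\K f_2^\star$. Since $F\modu f_2^\star$ is standard and $\dim \Ker f_2^\star \geq 2$, Proposition~\ref{operatorsymprop}(b) supplies a vector $y\in U$ and a root-linear form $\alpha:\K\to\K$ such that
$$F\modu f_2^\star(\pi\circ s) = \pi(s(y)) + \alpha(s(e_1)[e_1])\,\bar f_1^\star + \sum_{k=3}^n \alpha(s(e_k)[e_k])\,\bar f_k^\star$$
for every $s\in\calS$. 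Comparing the $\bar f_k^\star$-components for $k\in\{3,\dots,n\}$ with the constraint $F(s)\in\K f_1^\star$ yields the key identity $s(y)[e_k] = \alpha(s(e_k)[e_k])$ (in characteristic~$2$); once I prove $\alpha=0$, the displayed formula collapses to $(\pi\circ s)(y)$, which is local.

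The easier half of the argument treats the case where one of the diagonal linear forms $m_k:s\mapsto s(e_k)[e_k]$ (for $k\in\{3,\dots,n\}$) is nonzero on $\calS$. Substituting $\lambda^2 s$ into the identity above with $\lambda\in\K\setminus\{0,1\}$ (which exists since $\#\K>2$), the linearity of $s\mapsto s(y)[e_k]$ and the root-linearity $\alpha(\lambda^2 x)=\lambda\alpha(x)$ give $\lambda^2\,s(y)[e_k]=\lambda\,s(y)[e_k]$, so $s(y)[e_k]=0$ on $\calS$. Reinserting into the identity forces $\alpha\circ m_k=0$, and since the nonzero $\K$-linear form $m_k:\calS\to\K$ is surjective, $\alpha=0$.

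The main obstacle is the saturated case where every $m_k$ vanishes on $\calS$. These $n-2$ independent constraints define a codimension-$(n-2)$ subspace $\calS_0$ of $\calL_s(U,V^\star)$; together with $\codim\calS\leq n-2$, they pin down $\calS=\calS_0$, so I have no remaining freedom in $\calS$. The trick is to turn this rigidity into a contradiction via range-compatibility: for each $c_0\in\K$, consider the operator $s_0\in\calS_0$ whose matrix in the chosen bases has the $2\times 2$ top-left block filled with $c_0$ and zeros elsewhere; its image is exactly the line $\K\,(f_1^\star+f_2^\star)$. Meanwhile $F(s_0)=c(s_0)\,f_1^\star$ with $c(s_0) = s_0(y)[e_1] + \alpha(c_0) = (y_1+y_2)c_0 + \alpha(c_0)$ (writing $y=\sum y_ie_i$), and range-compatibility forces $c(s_0)=0$ because $f_1^\star\notin\K\,(f_1^\star+f_2^\star)$. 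Hence $\alpha(c_0) = (y_1+y_2)c_0$ for every $c_0\in\K$, which exhibits $\alpha$ as a $\K$-linear form; since a map simultaneously $\K$-linear and root-linear vanishes whenever $\#\K>2$, this yields $\alpha=0$, and the claim follows.
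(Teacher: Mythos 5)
Your argument is correct. The first half runs parallel to the paper's: from $F(s)\in\K f_1^\star$ and the standard form of $F\modu f_2^\star$ you extract the identity $s(y)[e_k]=\alpha(s(e_k)[e_k])$ for $k\geq 3$, and the ``linear versus root-linear'' dichotomy (which you make explicit via the $\lambda^2 s$ substitution, using $\#\K>2$) forces either $\alpha=0$ or the vanishing of all the diagonal forms $s\mapsto s(e_k)[e_k]$, $k\geq 3$, on $\calS$; in the latter case the codimension hypothesis pins $\calS$ down to the full diagonal-constrained space, exactly as in the paper (where this is phrased as $\calM=\calT\coprod\Mat_{n,p-n}(\K)$). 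Where you genuinely diverge is the endgame. The paper restricts to the top-left $2\times 2$ block, obtains a range-compatible homomorphism $H$ on all of $\Mats_2(\K)$, and invokes the classification of Theorem \ref{fullsymmetric}(b) to match the two expressions for $H$ and conclude $\alpha=0$. You instead evaluate $F$ on the single rank-one family $s_0$ whose matrix is the all-$c_0$ block $\begin{bmatrix} c_0 & c_0\\ c_0 & c_0\end{bmatrix}$ padded with zeros: since $\im s_0=\K(f_1^\star+f_2^\star)$ meets $\K f_1^\star$ trivially, range-compatibility gives $F(s_0)=0$, whence $\alpha(c_0)=(y_1+y_2)c_0$, exhibiting $\alpha$ as simultaneously linear and root-linear, hence zero. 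Your version buys a more elementary and self-contained contradiction (no second appeal to the classification theorem on $\Mats_2(\K)$), at the price of a slightly more ad hoc choice of test operator; both are valid.
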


\begin{proof}
Assume on the contrary that $F \modu f_2^\star$ is non-local.
Then, $F \modu f_2^\star$ is the sum of a local map $x \mapsto s(x)$, for some vector $x \in U$,
and of a non-zero root-linear map.
Let us extend $(f_1^\star,f_2^\star)$ into a basis $(f_1^\star,f_2^\star,\dots)$ of $V^\star$,
and then consider the dual basis $(e_1,\dots,e_n)$ of $V$, which we extend into a basis $(e_1,\dots,e_p)$ of $U$.
Denote by $\calM$ the matrix space associated with $\calS$ in the bases $(e_1,\dots,e_p)$ and $(f_1^\star,\dots,f_n^\star)$,
and by $G : \calM \rightarrow \K^n$ the map on matrices that corresponds to $F$.
One deduces that there is a non-zero root-linear form $\alpha$ on $\K$ together with linear form $\beta_1,\beta_3,\dots,\beta_n$ on $\calM$ such that
$$G : M=(m_{i,j}) \longmapsto \begin{bmatrix}
\alpha(m_{1,1})+\beta_1(M) \\
? \\
\alpha(m_{3,3})+\beta_3(M) \\
\vdots \\
\alpha(m_{n,n})+\beta_{n}(M)
\end{bmatrix}$$
On the other hand, as $F \modu f_1^\star=0$, it turns out that
$$\forall i \in \lcro 3,n\rcro, \; \forall M \in \calS, \quad \alpha(m_{i,i})=\beta_i(M).$$
As $\K$ has more than $2$ elements, only the zero map is both root-linear and linear, and hence $\beta_3=\cdots=\beta_n=0$
and, as $\alpha$ is non-zero, we deduce that
$$\forall i \in \lcro 3,n\rcro, \; \forall M \in \calS, \; m_{i,i}=0.$$
Denote by $\calT$ the subspace of $\Mats_n(\K)$ consisting of all the matrices $M=(m_{i,j}) \in \Mats_n(\K)$ such that
$\forall i \in \lcro 3,n\rcro, \; m_{i,i}=0$. Then, $\calM \subset \calT \coprod \Mat_{n,p-n}(\K)$
and $\dim (\calT \coprod \Mat_{n,p-n}(\K)) \leq \dim \calS$, whence $\calM = \calT \coprod \Mat_{n,p-n}(\K)$.

Denote by $H : \Mats_2(\K) \rightarrow \K^2$ the homomorphism such that
$$\forall A \in \Mats_2(\K), \; G\left(\begin{bmatrix}
A & [0]_{2 \times (p-2)} \\
[0]_{(n-2) \times 2} & [0]_{(n-2) \times (p-2)}
\end{bmatrix}\right) = \begin{bmatrix}
H(A) \\
[0]_{(n-2) \times 1}
\end{bmatrix}.$$
As $G$ is range-compatible, so is $H$.
Note that we have a linear form $\theta$ on $\Mats_2(\K)$ such that
$$H : A=(a_{i,j}) \mapsto \begin{bmatrix}
\alpha(a_{1,1})+\theta(A) \\
0
\end{bmatrix}.$$

Then, by Theorem \ref{fullsymmetric}, the map $H$ equals
$$A=(a_{i,j}) \mapsto \begin{bmatrix}
\alpha'(a_{1,1})+\theta_1(A) \\
\alpha'(a_{2,2})+\theta_2(A)
\end{bmatrix}$$
for some root-linear form $\alpha'$ on $\K$ and some linear forms $\theta_1$ and $\theta_2$ on $\Mats_2(\K)$.
As above we would deduce from looking at the first row that
$\alpha'=\alpha$, and from looking at the second row that $\alpha'=0$.
This leads to $\alpha=0$, contradicting earlier results.
\end{proof}

Thus, both maps $F \modu f_1^\star$ and $F \modu f_2^\star$ are local. By Lemma \ref{twovectorssymlemma}, the map $F$ is local.
This completes the inductive step. Hence, Theorem \ref{generalizedsymtheo} is now proved for all fields with more than $2$ elements.

\subsection{Proof of Theorem \ref{generalizedsymtheo} for fields with $2$ elements}

Here, we use a similar strategy as in the previous two proofs, but now we need a stronger version of Lemma \ref{existtwovectorssym}.
In its statement, we shall use the following notation:
$$\calT_3(\K):=\Biggl\{\begin{bmatrix}
a & b & c \\
b & d & 0 \\
c & 0 & e
\end{bmatrix} \mid (a,b,c,d,e) \in \K^5\Biggr\} \subset \Mats_3(\K).$$
Note that $\calT_3(\K)$ is a linear hyperplane of $\Mats_3(\K)$.

\begin{lemma}\label{existthreevectorssymF2}
Assume that $\# \K=2$.
Let $U$ be a finite-dimensional vector space, $V$ be a linear subspace of $U$, and $\calS$
be a linear subspace of $\calL_s(U,V^\star)$. Assume that $\codim \calS \leq \dim V-2$ and $\dim V \geq 3$.
Then:
\begin{enumerate}[(a)]
\item Either there exist distinct non-zero vectors $f_1^\star$, $f_2^\star$ and $f_3^\star$ in $V^\star$ such that
$\codim (\calS \modu f_i^\star) \leq \dim V-3$ for all $i \in \lcro 1,3\rcro$;
\item Or $\dim V=3$ and $\calS$ is represented, in a choice of bases of $U$ and $V^\star$, by
$\calT_3(\K) \coprod \Mat_{3,p-3}(\K)$, where $p:=\dim U$.
\end{enumerate}
\end{lemma}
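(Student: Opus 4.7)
The plan is to assume (a) fails and deduce (b). Set $n := \dim V$ and $p := \dim U$. Call a non-zero vector $f^\star \in V^\star$ \emph{$\calS$-good} when $\codim(\calS \modu f^\star) \leq n-3$. Failure of (a) means at most two non-zero vectors of $V^\star$ are $\calS$-good. Since $\codim \calS \leq n-3$ would make every $f^\star$ good (codimension only decreases under quotienting), we are forced into $\codim \calS = n-2$, and then at least $2^n - 3$ non-zero vectors of $V^\star$ are non-$\calS$-good. For each such $f^\star$, the space $\calS$ contains every rank~$1$ self-adjoint operator of the form $x \mapsto (\lambda\, f^\star(x) + \psi(x))\, f^\star$ with $\lambda \in \K$ and $\psi \in V^o$.

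Fix a compatible pair $(\bfB_1, \bfB_2^\star)$ of bases and let $\calM \subseteq \Mats_n(\K) \coprod \Mat_{n,p-n}(\K)$ represent $\calS$. For each non-good $\alpha \in \K^n$, the choice $\lambda=1,\psi=0$ places $\alpha \alpha^T$ (embedded in the $\Mats_n$-block) in $\calM$, while $\lambda=0$ places $\begin{bmatrix} 0 & \alpha \gamma^T \end{bmatrix}$ in $\calM$ for every $\gamma \in \K^{p-n}$. Since $2^n - 3 > 2^{n-1}$ for $n \geq 3$, the non-good vectors cannot all lie in a proper subspace of $\K^n$, so they span $\K^n$. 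Combining this with the rank-one matrices $\begin{bmatrix} 0 & \alpha \gamma^T \end{bmatrix}$, we obtain $\{0\} \coprod \Mat_{n,p-n}(\K) \subseteq \calM$, whence $\calS_m = \calL(U/V, V^\star)$, and a dimension count yields $\calM = \calM_r \coprod \Mat_{n,p-n}(\K)$, where $\calM_r := S(\calM)$ is a codimension-$(n-2)$ subspace of $\Mats_n(\K)$ containing $\alpha \alpha^T$ for every non-good $\alpha$.

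Pick linearly independent forms $\ell_1, \ldots, \ell_{n-2}$ on $\Mats_n(\K)$ with common kernel $\calM_r$, and set $q_i(\alpha) := \ell_i(\alpha \alpha^T)$. Each $q_i$ is a polynomial on $\K^n$ of degree at most $2$ vanishing at $0$ and supported on the set of good vectors, so $|\operatorname{supp} q_i| \leq 2$. Every function $f : \K^n \to \K$ has a unique polynomial expansion $f = \sum_{S \subseteq \{1,\ldots,n\}} c_S \prod_{k \in S} \alpha_k$ with $c_S = \sum_{v : \operatorname{supp} v \subseteq S} f(v)$; degree at most $2$ is equivalent to $c_S = 0$ for all $|S| \geq 3$. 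Taking $S = \{1,\ldots,n\}$ (possible since $n \geq 3$) shows $|\operatorname{supp} q_i|$ is even, so $|\operatorname{supp} q_i| \in \{0, 2\}$. If $|\operatorname{supp} q_i|=2$ with $\operatorname{supp} q_i = \{v_1, v_2\}$, then $c_S$ equals the parity of $\bigl|\{j : \operatorname{supp} v_j \subseteq S\}\bigr|$; for $n \geq 4$, applying this with each $S = \{1,\ldots,n\}\setminus\{j\}$ (of size $n-1 \geq 3$) forces $v_{1,j} = v_{2,j}$ for every $j$, i.e.\ $v_1 = v_2$, a contradiction. Thus $q_i = 0$ for all $i$, so by uniqueness of the polynomial expansion each $\ell_i$ vanishes as a form on $\Mats_n(\K)$, contradicting linear independence. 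This rules out $n \geq 4$.

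The remaining case is $n=3$, where $\calM_r$ is a hyperplane defined by a single non-zero form $\ell$, and the parity argument gives $\operatorname{supp} q_\ell = \{v_1, v_2\}$ for two distinct non-zero, hence linearly independent, vectors of $\K^3$. The main obstacle is the final basis adjustment: one chooses $X \in \GL_3(\K)$ with $X^T v_1 = (0,1,1)^T$ and $X^T v_2 = (1,1,1)^T$, which is possible since both pairs are linearly independent and thus extendable to bases of $\K^3$. The induced congruence $S \mapsto X^T S X$ on $\Mats_3(\K)$ transforms $q_\ell$ into $\alpha_2 \alpha_3$, and hence $\calM_r$ into the hyperplane $\{M \in \Mats_3(\K) : m_{2,3} = 0\} = \calT_3(\K)$. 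Propagating the change of basis of $V$ to a basis of $U$ by keeping the last $p-3$ vectors preserves the splitting $\calM = \calM_r \coprod \Mat_{3, p-3}(\K)$, so in the new compatible pair of bases $\calS$ is represented by $\calT_3(\K) \coprod \Mat_{3, p-3}(\K)$, establishing (b).
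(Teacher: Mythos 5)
Your proof is correct. It arrives at the same two structural facts as the paper's proof --- that failure of (a) forces $\codim \calS=n-2$, $\calS_m=\calL(U/V,V^\star)$, and a codimension-$(n-2)$ subspace $\calM_r$ of $\Mats_n(\K)$ containing $\alpha\alpha^T$ for all but at most two non-zero $\alpha\in\K^n$ --- but the way you exploit this last fact is genuinely different. The paper places the two exceptional vectors at $(1,\dots,1)^T$ and $(0,1,\dots,1)^T$ in a suitable basis and then hand-builds the matrices $E_{i,i}$ and $E_{i,j}+E_{j,i}$ out of the available $XX^T$, concluding $\calM_r=\Mats_n(\K)$ for $n>3$ (a contradiction) and $\calM_r=\calT_3(\K)$ for $n=3$. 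You instead pass to linear forms $\ell_1,\dots,\ell_{n-2}$ cutting out $\calM_r$ and analyse the functions $q_i(\alpha)=\ell_i(\alpha\alpha^T)$ on $\F_2^n$ through their algebraic normal form: the injectivity of $\ell\mapsto q_\ell$, the evenness of the support of a degree-$\leq 2$ function when $n\geq 3$, and the coefficient formula for the sets $\lcro 1,n\rcro\setminus\{j\}$ rule out $n\geq 4$ and pin down the support of $q_\ell$ exactly when $n=3$, after which a congruence normalizes $\calM_r$ to $\calT_3(\K)$. This buys a more conceptual argument (the combinatorics is isolated in a standard fact about Boolean polynomials, and the identification of the exceptional hyperplane as $\calT_3(\K)$ up to congruence comes out cleanly rather than by inspection), and it also dispenses with the paper's preliminary case split on whether $\calS_r=\calL_s(V,V^\star)$, since fullness of $\calS_m$ is derived rather than assumed. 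One small expository point: in the $n=3$ step you should say explicitly that the support of $q_\ell$ cannot be empty because $\ell\mapsto q_\ell$ is injective and $\ell\neq 0$; you establish that injectivity in the $n\geq 4$ paragraph, so it is available, but the $n=3$ sentence relies on it silently.
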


\begin{proof}
We use a similar line of reasoning as in the proof of Lemma \ref{existtwovectorssym}.
Assume first that $\calS_r=\calL_s(V,V^\star)$.
If $\calS=\calL_s(U,V^\star)$, we simply pick
three distinct vectors in $V^\star \setminus \{0\}$.
If $\calS \subsetneq \calL_s(U,V^\star)$, then $\calS_m \subsetneq \calL(U/V,V^\star)$, and
hence there is no basis $(f_1^\star,\dots,f_n^\star)$ of $V^\star$ for which, for all $i \in\lcro 1,n\rcro$, the linear subspace $\calS_m$ contains every operator of $\calL(U/V,V^\star)$ with range $\K f_i^\star$; hence, there is a linear hyperplane $H$ of $V^\star$
such that, for all $f^\star \in V^\star \setminus H$, the space $\calS$ does not contain every operator of $\calL_s(U,V^\star)$
with range $\K f^\star$, leading to $\codim (\calS \modu f^\star) \leq \dim V-3$.
In any case, condition (a) is easily seen to hold.

In the rest of the proof, we assume that $\calS_r \subsetneq \calL_s(V,V^\star)$ and that condition (a) does not hold.
Then, there are distinct vectors $f_1^\star$ and $f_2^\star$ in $V^\star \setminus \{0\}$ such that, for all
$f^\star \in V^\star \setminus \{0,f_1^\star,f_2^\star\}$, one has $\codim (\calS \modu f^\star)>\dim V-3$.
For any such $f^\star$, we deduce that $\calS$ contains all the operators of $\calL_s(U,V^\star)$ with range $\K f^\star$.
Then, as those linear forms span $V^\star$, we learn that $\calS_m=\calL(U/V,V^\star)$. On the other hand,
as there is at least one such linear form we find $\codim \calS=\dim V-2$ and hence $\codim \calS_r=\dim V-2$.
As $f_1^\star$ and $f_2^\star$ are non-collinear (as they are distinct and non-zero, while $\# \K=2$), there is
a basis $\bfB^\star=(e_1^\star,\dots,e_n^\star)$ of $V^\star$ in which $f_1^\star=\underset{k=1}{\overset{n}{\sum}} e_k^\star$
and $f_2^\star=\underset{k=2}{\overset{n}{\sum}} e_k^\star$. Consider the predual basis $\bfB$ of $V$, and denote by $\calM$
the matrix space representing $\calS_r$ is the bases $\bfB$ and $\bfB^\star$.
Note that $\calM$ has codimension $n-2$ in $\Mats_n(\K)$.
Moreover, we know that, for every $X \in \K^n$ with the possible exception of $X_1:=\begin{bmatrix}
1 \\
\vdots \\
1
\end{bmatrix}$ and  $X_2:=\begin{bmatrix}
0 \\
1 \\
\vdots \\
1
\end{bmatrix}$, the space $\calM$ should contain $XX^T$. As in the proof of Lemma \ref{rank1lemma}, this shows:
\begin{itemize}
\item That $\calM$ contains $E_{i,i}$ for all $i \in \lcro 1,n\rcro$;
\item That $\calM$ contains $E_{i,j}+E_{i,j}$
for all $(i,j)\in \lcro 1,n\rcro^2$ such that $i \neq j$, unless $n=3$ in which case we can only assert that
$\calM$ contains $E_{1,2}+E_{2,1}$ and $E_{1,3}+E_{3,1}$.
\end{itemize}
If $n>3$, then we deduce that $\calM=\Mats_n(\K)$, which contradicts the fact that $\codim \calM=n-2$.
Therefore, $n=3$, and we see from the above that $\calT_3(\K) \subset \calM$. However $\dim \calT_3(\K)=5=\dim \calM$,
whence $\calM=\calT_3(\K)$.
Let us extend $\bfB$ into a basis $\widetilde{\bfB}$ of $U$.
Then, as $\calS_m=\calL(U/V,V^\star)$, we conclude that in the bases
$\widetilde{\bfB}$ and $\bfB^\star$ the matrix space $\calT_3(\K) \coprod \Mat_{3,p-3}(\K)$ represents $\calS$.
\end{proof}

Now, let us assume that $\# \K=2$. We prove Theorem \ref{generalizedsymtheo} by induction on the dimension of $V$.
Again, the case $\dim V \leq 2$ is either vacuous or trivial.
Assume now that $\dim V \geq 3$.

\begin{claim}\label{lastclaimsym}
There are distinct non-zero forms $f_1^\star$, $f_2^\star$, $f_3^\star$ in $V^\star$ such that
each map $F \modu f_i^\star$ is standard.
\end{claim}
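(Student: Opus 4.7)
The plan is to deduce the claim directly from Lemma \ref{existthreevectorssymF2} together with the induction hypothesis on the dimension of $V$, dispatching the two cases of that lemma separately.

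In case (a) of Lemma \ref{existthreevectorssymF2}, we are handed three distinct non-zero vectors $f_1^\star, f_2^\star, f_3^\star$ of $V^\star$ such that $\codim(\calS \modu f_i^\star) \leq \dim V - 3$ for each $i$. Using Remark \ref{Rem4}, each $\calS \modu f_i^\star$ is canonically identified with a linear subspace of $\calL_s(U, (\Ker f_i^\star)^\star)$, and of course $\dim \Ker f_i^\star = \dim V - 1$. The codimension hypothesis becomes $\codim(\calS \modu f_i^\star) \leq \dim(\Ker f_i^\star) - 2$ in $\calL_s(U, (\Ker f_i^\star)^\star)$, so the induction hypothesis applied to $\Ker f_i^\star \subset U$ shows that each $F \modu f_i^\star$ is standard. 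This handles case (a).

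In case (b), we have $\dim V = 3$, and in a suitable pair of bases $\calS$ is represented by $\calT_3(\K) \coprod \Mat_{3, p-3}(\K)$. The key observation is built into the proof of Lemma \ref{existthreevectorssymF2}: with the bases given there, the only non-zero forms $f^\star \in V^\star$ for which $\calS$ fails to contain every operator of $\calL_s(U,V^\star)$ with range $\K f^\star$ are the two forms $e_1^\star + e_2^\star + e_3^\star$ and $e_2^\star + e_3^\star$. As $\# \K = 2$, the space $V^\star$ has exactly seven non-zero elements, so at least five non-zero forms $f^\star$ satisfy the property that $\calS$ contains every operator of $\calL_s(U,V^\star)$ with range $\K f^\star$; for any such $f^\star$, we have $\calS \modu f^\star = \calL_s(U, (\Ker f^\star)^\star)$. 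Since $\dim \Ker f^\star = 2$, Proposition \ref{operatorsymprop} gives the complete description of range-compatible homomorphisms on this full space, and in particular $F \modu f^\star$ is trivially standard (it is already defined on the full target space). Choosing any three distinct such $f^\star$ completes case (b).

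The only subtle point is verifying in case (b) that at least three non-zero forms enjoy the ``good'' property; once that is pinned down via the explicit structure of $\calT_3(\K)$, everything reduces to bookkeeping. I expect no genuine obstacle beyond carefully quoting the internal computation from the proof of Lemma \ref{existthreevectorssymF2} that identifies the two exceptional forms.
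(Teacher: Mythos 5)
Your treatment of case (a) is correct and is exactly the paper's argument. Case (b), however, rests on a reversed implication. You assert that if $\calS$ contains every operator of $\calL_s(U,V^\star)$ with range contained in $\K f^\star$, then $\calS \modu f^\star = \calL_s\bigl(U,(\Ker f^\star)^\star\bigr)$. The rank theorem gives the opposite: writing $\calK$ for the kernel of $s \mapsto \pi\circ s$ on $\calL_s(U,V^\star)$ (i.e.\ the operators with range in $\K f^\star$), one has $\codim(\calS \modu f^\star) = \codim \calS - \codim_{\calK}(\calS\cap\calK)$. So when $\calS \supseteq \calK$ the codimension does \emph{not} drop: since $\codim\calS = \dim V - 2 = 1$ in case (b), each of your five ``good'' forms yields a quotient of codimension $1$ inside a space with $2$-dimensional target, which is outside the reach of the induction hypothesis (one would need codimension $\leq 0$). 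Conversely, the two forms you discard, corresponding to $X_1=(1,1,1)^T$ and $X_2=(0,1,1)^T$, are precisely the ones for which $X_iX_i^T \notin \calT_3(\K)$, hence $\calS \not\supseteq \calK$, the codimension drops to $0$, and the quotient \emph{is} the full space. This is also consistent with the logic of Lemma \ref{existthreevectorssymF2} itself: case (b) occurs exactly because fewer than three forms have small quotient codimension, so there cannot be five of them.

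The paper's proof therefore takes the two exceptional forms $f_1^\star, f_2^\star$ (full-space quotients, trivially standard) and must still produce a third form by hand: for $X_3=(1,0,0)^T$ the quotient is the proper subspace $\K^2 \coprod D_1 \coprod D_2 \coprod \Mat_{2,p-3}(\K)$, on which every range-compatible homomorphism is shown to be local by combining the splitting lemma with Theorem \ref{totalspace} and the fact that over $\F_2$ homomorphisms coincide with linear maps. That last step is the genuinely non-routine part of case (b), and it is missing from your argument; without it you only have two usable forms, not three.
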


\begin{proof}
If condition (a) from Lemma \ref{existthreevectorssymF2} holds, then the result is an obvious consequence of the induction hypothesis.
Assume now that $\calS$ is represented, in a compatible pair $(\bfB,\bfC^\star)$ of bases of $U$ and $V^\star$, by
$\calT_3(\K) \coprod \Mat_{3,p-3}(\K)$. Set $X_1:=\begin{bmatrix}
1 \\
1 \\
1
\end{bmatrix}$, $X_2:=\begin{bmatrix}
0 \\
1 \\
1
\end{bmatrix}$ and $X_3:=\begin{bmatrix}
1 \\
0 \\
0
\end{bmatrix}$ and note that $X_1,X_2,X_3$ are distinct non-zero vectors of $\K^3$.
Denote by $f_1^\star$, $f_2^\star$ and $f_3^\star$ the corresponding vectors of $V^\star$ (through $\bfB^\star$).
Then, as $\calT_3(\K)$ contains no rank $1$ matrix with range either $\K X_1$ or $\K X_2$, we see that
$\codim (\calS \modu f_1^\star)=\codim (\calS \modu f_2^\star)=0$.
On the other hand, we see that $\calS \modu f_3^\star$ is represented by the matrix space
$\K^2 \coprod D_1 \coprod D_2 \coprod \Mat_{2,p-3}(\K)$, where $D_1$ and $D_2$ respectively denote the $1$-dimensional linear subspaces
$\K \times \{0\}$ and $\{0\} \times \K$ of $\K^2$.
Combining Theorem \ref{totalspace} with the splitting lemma, we find that every range-compatible linear map on $\calS \modu f_3^\star$
is local. Thus, as here $\#\K=2$, every range-compatible homomorphism on $\calS \modu f_3^\star$
is local. In particular, $F \modu f_i^\star$ is standard for all $i \in \{1,2,3\}$.
\end{proof}

We are now about to conclude the proof.
Let us pick distinct non-zero forms $f_1^\star,f_2^\star,f_3^\star$ satisfying the conclusion of Claim \ref{lastclaimsym}.
Without loss of generality, we can assume that $F \modu f_1^\star$ and $F \modu f_2^\star$
are both local or both non-local.
In the first case, Lemma \ref{twovectorssymlemma} yields that $F$ is local.
Assume now that $F \modu f_1^\star$ and $F \modu f_2^\star$ are both non-local.
We know that $F \modu f_1^\star=G_1 \modu f_1^\star$ and $F \modu f_2^\star=G_2 \modu f_2^\star$
for some standard range-compatible homomorphisms $G_1$ and $G_2$ on $\calS$.
Neither $G_1$ nor $G_2$ is local. Moreover, the identity is the sole non-zero root-linear map from $\K$ to $\K$.
Hence, from the general form of standard maps that is given in Proposition \ref{operatorsymprop},
we deduce that $G_1-G_2$ is local. Thus, replacing $F$ with $F'=F-G_1$, we see that both maps
$F' \modu f_1^\star$ and $F' \modu f_2^\star$ are local. It follows again from Lemma \ref{twovectorssymlemma}
that $F'$ is local, and we conclude that $F=F'+G_1$ is standard.

Thus, the inductive step is completed for fields with two elements. Theorem \ref{generalizedsymtheo}
is now established, and Theorem \ref{maintheosym} follows from it.

\section{Range-compatible homomorphisms on large spaces of alternating matrices}\label{RCalt}

\subsection{Main results}

As in the symmetric case, it is necessary, in order to obtain Theorem \ref{alttheo}, to consider a general version that applies to a larger class
of operator spaces.

\begin{Def}
Let $U$ be a finite-dimensional vector space and $V$ be a linear subspace of $U$.
A linear map $f : U \rightarrow V^\star$ is called \textbf{alternating} when
$$\forall x \in V, \; f(x)[x]=0,$$
i.e.\ when the bilinear form $(x,y) \in V^2 \mapsto f(x)[y]$ is alternating.
The set of all alternating linear maps from $U$ to $V^\star$ is denoted by $\calL_a(U,V^\star)$.
One checks that it is a linear subspace of $\calL(U,V^\star)$.
\end{Def}

For every compatible pair $(\bfB_1,\bfB_2^\star)$ of bases of $U$ and $V^\star$,
a map from $U$ to $V^\star$ is alternating if and only if its matrix in $\bfB_1$ and $\bfB_2^\star$
belongs to $\Mata_n(\K) \coprod \Mat_{n,p-n}(\K)$, where $n:=\dim V$ and $p:=\dim U$.

\begin{Rem}
Let $W$ be a linear subspace of $V^\star$.
Through the canonical identification between $V^\star/W$ and $({}^o W)^\star$,
the space $\calL_a(U,V^\star) \modu W$ corresponds to $\calL_a(U,({}^o W)^\star)$.

In particular, $\calL_a(U,V^\star)$ corresponds to $\calL_a(U,U^\star) \modu V^o$.

As in Remark \ref{Rem4}, such identifications have no impact on the specific problem we are studying.
\end{Rem}

Let $\calS$ be a linear subspace of $\calL_a(U,V^\star)$.
To $\calS$, we naturally attach two operator spaces:
the space
$$\calS_r:=\bigl\{f_{|V} \mid f \in \calS\bigr\},$$
which is a linear subspace of $\calL_a(V,V^\star)$,
and the space of all $f \in \calS$ such that $f$ vanishes everywhere on $V$, which we denote by $\calS_m$
 and naturally identify with a linear subspace of $\calL(U/V, V^\star)$.
By the rank theorem, we have
$$\dim \calS=\dim \calS_r+\dim \calS_m$$
whereas
$$\dim \calL_a(U,V^\star)=\dim \calL_a(V,V^\star)+\dim \calL(U/V,V^\star).$$
Let $(\bfB_1,\bfB_2^\star)$ be a compatible pair of bases of $U$ and $V^\star$, and denote by $\calM$ the matrix space representing
$\calS$ in them. Write $\bfB_1=(e_1,\dots,e_p)$ and set $\bfB'_1:=(e_1,\dots,e_n)$ and $\bfB''_1:=(\overline{e_{n+1}},\dots,\overline{e_p})$,
which are bases of $V$ and $U/V$, respectively.
Every $M \in \calM$ splits as
$$M=\begin{bmatrix}
S(M) & R(M)
\end{bmatrix} \quad \text{with $S(M) \in \Mata_n(\K)$ and $R(M) \in \Mat_{n,p-n}(\K)$.}$$
Then, the matrix space $S(\calM)$ represents $\calS_r$ in the bases $\bfB'_1$ and $\bfB_2^\star$.
On the other hand, if we denote by $\calT$ the subspace of $\calS$ consisting of all its matrices $M$ such that $S(M)=0$,
then $R(\calT)$ represents $\calS_m$ in the bases $\bfB''_1$ and $\bfB_2^\star$.

The proper generalization of Theorem \ref{alttheo} to spaces of alternating linear operators is the following one.

\begin{theo}\label{generalizedalttheo}
Let $U$ be a finite-dimensional vector space and $V$ be a linear subspace of $U$.
Let $\calS$ be a linear subspace of $\calL_a(U,V^\star)$ and assume that\footnote{
Here, $\codim \calS$ and $\codim \calS_r$ stand, respectively, for the codimension of $\calS$ in $\calL_a(U,V^\star)$ and
for the one of $\calS_r$ in $\calL_a(V,V^\star)$.}
$\codim \calS \leq \dim V-2$ and $\codim \calS_r \leq \dim V-3$.
Then, every range-compatible homomorphism on $\calS$ is local.
\end{theo}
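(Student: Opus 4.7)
The plan is to prove Theorem \ref{generalizedalttheo} by induction on $\dim V$, following the overall architecture of the proof of Theorem \ref{generalizedsymtheo}. Both codimension hypotheses must be carried through the induction: quotienting $\calS$ by a non-zero form $f^\star \in V^\star$ replaces the target by $V' := \Ker f^\star$ of dimension $\dim V-1$, and reapplying the theorem requires $\codim(\calS \modu f^\star) \leq \dim V' - 2 = \dim V - 3$ together with $\codim((\calS \modu f^\star)_r) \leq \dim V' - 3 = \dim V - 4$. For $\dim V \leq 2$ the hypothesis $\codim \calS_r \leq \dim V - 3$ is vacuously impossible, so there is nothing to prove. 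The base case is therefore $\dim V = 3$, where $\codim \calS_r \leq 0$ forces $\calS_r = \calL_a(V,V^\star)$ while $\codim \calS \leq 1$; I would handle this directly by combining the splitting lemma and Corollary \ref{rowlemma} with the very restricted structure of the alternating operators on a $3$-dimensional space to rule out any non-local behavior of $F$.

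For the inductive step with $\dim V \geq 4$, the first step is an analog of Lemma \ref{existtwovectorssym}: there exist two non-collinear forms $f_1^\star, f_2^\star \in V^\star$ such that each quotient $\calS \modu f_i^\star$, viewed inside $\calL_a(U,(\Ker f_i^\star)^\star)$, still satisfies both codimension inequalities. The argument would mimic the one in Lemma \ref{existtwovectorssym}, but run in parallel for $\calS$ and for $\calS_r$: if $f^\star$ fails to preserve the first bound then $\calS$ must contain every alternating rank-$1$ operator $U \to V^\star$ with range $\K f^\star$, and if it fails to preserve the second bound then $\calS_r$ must contain every such rank-$1$ operator from $V$. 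Should too many $f^\star$ be bad in either sense, summing enough rank-$1$ operators would exceed the codimension budget of $\calS$ or of $\calS_r$, contradicting the hypothesis; this is an alternating-flavoured rerun of Lemma \ref{rank1lemma}.

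By the induction hypothesis, each map $F \modu f_i^\star$ is then local, giving vectors $x_1, x_2 \in U$ with $F(s) \equiv s(x_i) \pmod{\K f_i^\star}$ for all $s \in \calS$. Subtracting the local homomorphism $s \mapsto s(x_1)$ reduces to $x_1 = 0$, so that $F(s) \in \K f_1^\star$ and $s(x_2) \in \Vect(f_1^\star, f_2^\star)$ for every $s \in \calS$. The concluding step is an analog of Lemma \ref{twovectorssymlemma}, showing $x_2 = 0$ via the trichotomy $x_2 \notin V$, $x_2 \in V \cap {}^o\Vect(f_1^\star, f_2^\star) \setminus \{0\}$, and $x_2 \in V \setminus {}^o\Vect(f_1^\star, f_2^\star)$. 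In each subcase I would pick a compatible pair of bases of $U$ and $V^\star$ adapted to $x_2$, $f_1^\star$, $f_2^\star$, and, using both codimension hypotheses, produce an explicit alternating matrix $M \in \calS$ whose range can be pinned down and shown not to contain $F(M)$.

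The main obstacle is expected to be precisely this two-vectors lemma. In the alternating setting the diagonal of every matrix in $\calS_r$ vanishes identically and the range of an alternating matrix is always even-dimensional, tying the available target vectors together in pairs; consequently the explicit constructions of counter-examples are much more constrained than in the symmetric case and one has less freedom in placing nonzero entries where they are needed to demolish a candidate value $F(M) \in \K f_1^\star$. This is exactly where the extra hypothesis $\codim \calS_r \leq \dim V - 3$ (strictly sharper than $\codim \calS \leq \dim V - 2$) becomes indispensable: it furnishes the additional matrices inside $\calS_r$ required to complete each subcase of the trichotomy. This additional layer of bookkeeping, absent from the symmetric argument, is what the introduction describes as the alternating case being \emph{substantially more technical}.
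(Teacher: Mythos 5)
Your overall architecture matches the paper's: induction on $\dim V$ with base case $\dim V=3$, an existence lemma producing two good forms $f_1^\star,f_2^\star$, locality of the quotients by induction, and a case analysis forcing $x=0$. But there are two genuine gaps. The first is the base case, which you dispatch with ``splitting lemma plus Corollary \ref{rowlemma}''; this does not work. When $\codim\calS=1$ and the defect lives in $\calS_m$, the space is \emph{not} a coproduct of its alternating block with the remaining columns: after choosing bases adapted to the trace-duality annihilator of $\calS_m$, one gets $\calM_f\coprod\Mat_{3,p-3-r}(\K)$ where $\calM_f$ couples $A\in\Mata_3(\K)$ to a constrained completion $N+f(A)$ with $N\in\mathfrak{sl}_r(\K)$, via an arbitrary linear map $f:\Mata_3(\K)\to\Mat_r(\K)$. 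The splitting lemma cannot see this coupling. The paper must show, for each $A\neq 0$ and each $r\in\{1,2,3\}$, that $\calM_f$ contains a completion $M=\begin{bmatrix}A & ?\end{bmatrix}$ with $\im M=\im A$, and the case $r=1$ further requires killing an extra endomorphism $\theta$ by a quadratic-form argument (vanishing on $\K^3$ minus a line) that is field-dependent. None of this is anticipated in your plan, and it is the technical core of the theorem.

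The second gap is in the endgame. Your existence lemma sketch asserts that if $f^\star$ fails the second bound then ``$\calS_r$ must contain every rank-$1$ operator from $V$ with range $\K f^\star$''; but $\calL_a(V,V^\star)$ contains no rank-$1$ operators at all (alternating forms have even rank), a fact you yourself invoke later. The correct failure condition concerns the space $\calS_r^{f^\star}$ of operators whose associated bilinear form vanishes on $\Ker f^\star\times\Ker f^\star$, i.e.\ rank-$2$ operators of the form $E_{i,j}-E_{j,i}$, and the conclusion is that the bad forms lie in a codimension-$2$ subspace $P$ (resp.\ a hyperplane $H$ for the first bound), whence two good independent forms exist outside $P\cup H$. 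More seriously, your trichotomy promises that each subcase is closed by ``an explicit alternating matrix $M\in\calS$ whose range does not contain $F(M)$.'' That works for $x\notin V$ (via a rank-one-image map on a $2$-dimensional coproduct factor) and for $x\in V\setminus{}^o\Vect(f_1^\star,f_2^\star)$ (a pure codimension count on $\calS_r$), but the final subcase $x\in\Ker f_1^\star\cap\Ker f_2^\star\setminus\{0\}$ cannot be handled by a single witness matrix: the paper has to extract a derived $3$-column operator space $\calM'$ satisfying the hypotheses of the theorem with $\dim V=3$, apply the base case to conclude the induced map $G'$ is local, and then contradict locality by a dimension count on $\calM'X$. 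This essential reuse of the base case inside the inductive step is missing from your plan.
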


Again, the proof of Theorem \ref{generalizedalttheo} will be done by induction on the dimension of $V$.
The case when $\dim V=3$ is dealt with in Section \ref{altdim3}, and the inductive step
is performed in Section \ref{altinduction}.

\subsection{The case $\dim V=3$}\label{altdim3}

Here, we let $U$ be a finite-dimensional vector space and $V$ be a linear subspace of $U$ with dimension $3$.
Let $\calS$ be a linear subspace of $\calL_a(U,V^\star)$ such that $\codim \calS_r \leq \dim V-3$
and $\codim \calS \leq \dim V-2$. Then, $\calS_r=\calL_a(V,V^\star)$ and
$\calS_m$ has codimension at most $1$ in $\calL(U/V,V^\star)$. Set $p:=\dim U$.

\begin{Not}
For any non-negative integer $r$, we denote by $\mathfrak{sl}_r(\K)$ the space of all trace zero matrices in $\Mat_r(\K)$.
\end{Not}

Let us consider the bilinear form
$$b : (s,t) \in \calL(U/V,V^\star) \times \calL(V^\star,U/V) \mapsto \tr(v \circ u).$$
The orthogonal complement of $\calS_m$ with respect to $b$ is then a linear subspace $\calS_m^\bot$ of $\calL(V^\star,U/V)$
with dimension at most $1$, and its orthogonal complement for $b$ is $\calS_m$.
There are two main cases to consider:
\begin{itemize}
\item \textbf{Case 1: $\calS_m^\bot=\{0\}$.} \\
Then, $\calS=\calL_a(U,V^\star)$.
\item \textbf{Case 2: $\calS_m^\bot$ contains a non-zero operator, say $t$, with rank denoted by $r$.} \\
We can find bases $(e_1^\star,e_2^\star,e_3^\star)$ and $(\overline{f_4},\dots,\overline{f_{p}})$ of $V^\star$ and $U/V$
(with $f_4,\dots,f_{p}$ in $U$) in which
$t$ is represented by $\begin{bmatrix}
I_r & [0]_{r \times (3-r)} \\
[0]_{(p-3-r) \times r} & [0]_{(p-3-r) \times (3-r)}
\end{bmatrix}$. Then, in those bases, the space $\calS_m$ is represented by the set of all matrices of the form
$\begin{bmatrix}
N & [?]_{r \times (p-3-r)} \\
[?]_{(3-r) \times r} & [?]_{(3-r) \times (p-3-r)}
\end{bmatrix}$ with $N \in \mathfrak{sl}_r(\K)$.
If we denote by $(e_1,e_2,e_3)$ the predual basis of $(e_1^\star,e_2^\star,e_3^\star)$,
we find that $\bfB=(e_1,e_2,e_3,f_4,\dots,f_p)$ is a basis of $U$.
Finally, as $\calS_r=\calL_a(V,V^\star)$, we conclude that there is a linear map $f : \Mata_3(\K) \rightarrow \Mat_r(\K)$
such that $\calS$ is represented by the space of all matrices of the form
$$\begin{bmatrix}
A & [0]_{3 \times (p-3)}
\end{bmatrix}+
\begin{bmatrix}
[0]_{r \times 3} & N+f(A) & [?]_{r \times (p-3-r)} \\
[0]_{(3-r) \times 3} & [?]_{(3-r) \times r} & [?]_{(3-r) \times (p-3-r)}
\end{bmatrix}$$
with $A \in \Mata_3(\K)$ and $N \in \mathfrak{sl}_r(\K)$.
\end{itemize}

Case 2 helps motivate the following notation.

\begin{Not}
Let $r \in \lcro 0,3\rcro$ and $f : \Mata_3(\K) \rightarrow \Mat_r(\K)$ be a linear map.
We denote by $\calM_f$ the space of all $3$ by $3+r$ matrices of the form
$$\begin{bmatrix}
A & [0]_{3 \times r}
\end{bmatrix}+
\begin{bmatrix}
[0]_{r \times 3} & N+f(A) \\
[0]_{(3-r) \times 3} & [?]_{(3-r) \times r} \\
\end{bmatrix}$$
with $A \in \Mata_3(\K)$ and $N \in \mathfrak{sl}_r(\K)$.
\end{Not}

Note that $\calM_f=\Mata_3(\K)$ if $r=0$.
To sum up, we have proved that there exists an integer $r \in \lcro 0,3\rcro$, a linear map $f : \Mata_3(\K) \rightarrow \Mat_r(\K)$
and a compatible pair of bases in which $\calS$ is represented by the matrix space
$\calM_f \coprod \Mat_{3,p-3-r}(\K)$.
However, we know from Theorem \ref{maintheogroup} that every range-compatible homomorphism on
$\Mat_{3,p-3-r}(\K)$ is local. Thus, the splitting lemma shows that in order to conclude, it remains to prove the following lemma.

\begin{lemma}\label{dim3lemma}
Let $r \in \lcro 0,3\rcro$ and $f : \Mata_3(\K) \rightarrow \Mat_r(\K)$ be a linear map.
Then, every range-compatible homomorphism on $\calM_f$ is local.
\end{lemma}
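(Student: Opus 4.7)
The strategy is to apply Corollary \ref{rowlemma} to decompose $F(M)_i = F_i(R_i(M))$ into row homomorphisms, and to exploit range-compatibility of $F$ on well-chosen matrices to determine their form. We treat $r = 0$ first, then reduce $r \geq 1$ to a similar but richer row-level analysis.

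For $r = 0$ we have $\calM_f = \Mata_3(\K)$. Parameterizing $A$ by $(a, b, c) := (a_{12}, a_{13}, a_{23})$, we may write $F_1(R_1(A)) = g_1(a) + h_1(b)$, $F_2(R_2(A)) = g_2(a) + h_2(c)$, and $F_3(R_3(A)) = g_3(b) + h_3(c)$ with group endomorphisms of $\K$. Since $\Ker A = \Vect\bigl((c, -b, a)^T\bigr)$ whenever $A \neq 0$, range-compatibility becomes the polynomial-like identity
\[
c\bigl(g_1(a) + h_1(b)\bigr) - b\bigl(g_2(a) + h_2(c)\bigr) + a\bigl(g_3(b) + h_3(c)\bigr) = 0 \quad \forall (a,b,c) \in \K^3.
\]
Setting $a$, $b$, $c$ successively equal to zero yields three two-variable relations; each of them forces its pair of endomorphisms to be identical $\K$-linear scalar multiplications via the standard $\Hom(\K,\K)$ argument. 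The resulting scalars assemble into a vector $x \in \K^3$ with $F(A) = Ax$.

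For $r \geq 1$, consider $\calS_0 := \{[0_{3 \times 3} \mid B] \in \calM_f\}$, which corresponds to $\calB(0) := \{B \in \Mat_{3, r}(\K) : \tr(B_{\mathrm{top}}) = 0\}$, of codimension at most $1$ in $\Mat_{3, r}(\K)$. Since $\dim V = 3$, Theorem \ref{maintheogroup} applies and $F|_{\calS_0}$ is local. Subtracting a suitable local map, we assume $F|_{\calS_0} = 0$; then $F([A \mid B])$ depends only on $A$ (two admissible $B$'s differ by an element of $\calB(0)$), so $\widetilde F(A) := F([A \mid B])$ defines a group homomorphism $\widetilde F : \Mata_3(\K) \to \K^3$.

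For any $A$ with $v_A \not\parallel e_1$, we may solve a one-equation linear system in the free entries of $B$ to arrange $\im(B) \subseteq \im(A) = v_A^\perp$, whence the range-compatibility identity $v_A^T \widetilde F(A) = 0$. Expanding each component via the row decomposition of $F$ (with $\iota(A)$ the canonical lift having top $r$ rows equal to $f(A)$ and zero below) produces an extended polynomial-like identity in $(a, b, c)$ involving the group endomorphisms representing $F_1, F_2, F_3$, together with additional endomorphisms arising from the coupling of the top $r$ rows with $f(A)$. The same specialization strategy as in the $r = 0$ case, enhanced by extracting the coefficient of $c^2$ (or its analogues for $r \in \{2, 3\}$) to force the coupling endomorphisms to vanish, reduces $\widetilde F$ to a $\K$-linear map of the form $A \mapsto Ax$, which automatically also covers the degenerate $A$'s where range-compatibility yielded no direct constraint. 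The main obstacle is the bookkeeping around these coupling endomorphisms, and the verification, for each subvalue $r \in \{1, 2, 3\}$ and each configuration of $f$, that the extended identity is rigid enough to force $\K$-linearity and to annihilate the contributions of $f$; this requires a more delicate specialization than the three-line argument available in the $r = 0$ case, and must be handled uniformly across all fields, including $\# \K = 2$ where the standard extraction of $\K$-linearity becomes tautological and must be replaced by a quadratic-coefficient argument.
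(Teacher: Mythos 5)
Your $r=0$ argument is sound and coincides with the paper's. For $r \geq 1$, the opening reduction is also the right one and matches the paper: kill $F$ on the ``columns-only'' subspace via the splitting lemma and Theorem \ref{maintheogroup}, then observe that $F([A \mid B])$ depends only on $A$, yielding a homomorphism $\widetilde F$ on $\Mata_3(\K)$. The problem is that the two steps carrying all the difficulty are asserted rather than proved. First, the existence of an admissible $B$ with $\im B \subseteq \im A$ is not ``a one-equation linear system in the free entries of $B$'': for $r=2,3$ the condition $v_A^T B=0$ imposes $r$ linear conditions on the affine space $\{f(A)+N : N \in \mathfrak{sl}_r(\K)\}$, and the trace-zero constraint on $N$ is exactly what makes solvability non-obvious --- it is the whole content of the codimension-one hypothesis. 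A genuine construction is needed (the paper picks a suitable nonzero column $C$ of $A$, a rank-one matrix $B_0$ with range $\K C$ and nonzero trace, splits $\Mat_r(\K)=\K B_0 \oplus \mathfrak{sl}_r(\K)$ and decomposes $f(A)$ accordingly). Once you have it, you get $\widetilde F(A) \in \im A$ for \emph{every} nonzero $A$ when $r\in\{2,3\}$, with no restriction $v_A \not\parallel e_1$; then the $r=0$ lemma applies directly and the whole ``coupling endomorphism'' bookkeeping you envisage is unnecessary for those cases.

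The coupling term is genuinely unavoidable only for $r=1$, where the free entries $(d,e)$ solve $bd+ae=-cf(A)$ only when $(a,b)\neq(0,0)$, so the range-compatibility identity for $\widetilde F$ is available only off the line $a=b=0$ and carries an extra term $\theta(c)c$. Eliminating $\theta$ is the delicate point, and your proposal only gestures at it. Over $\#\K>2$ one must exploit that a group endomorphism of $\K$ forced to be constant on $\K\setminus\{0\}$ makes $\theta(c)c$ vanish; over $\#\K=2$ ``extracting the coefficient of $c^2$'' is not available precisely because the identity fails on the line $a=b=0$, and one needs a statement about quadratic forms vanishing outside a line (the paper invokes Lemma 5.2 of \cite{dSPRC1}). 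Since $\widetilde F$ is only known to be range-compatible on a proper subset of $\Mata_3(\K)$, the homomorphism property alone does not propagate the conclusion to the missing matrices; the identity must be completed first. As written, the proposal defers exactly the portions of the argument that require work.
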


To prove Lemma \ref{dim3lemma}, we start with the case $r=0$.

\begin{lemma}\label{dim3lemmar=0}
Every range-compatible homomorphism on $\Mata_3(\K)$ is local.
\end{lemma}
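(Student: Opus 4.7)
The plan is to exploit the three-dimensionality of $\Mata_3(\K)$ and reduce to a direct functional-equation computation. I will parameterize a generic matrix of $\Mata_3(\K)$ by its above-diagonal entries $(a,b,c) \in \K^3$. For $(a,b,c) \neq (0,0,0)$ one checks that the kernel of the matrix is spanned by $(c,-b,a)^T$, so that its range is the hyperplane $\{x \in \K^3 : c\, x_1 - b\, x_2 + a\, x_3 = 0\}$. By Corollary \ref{rowlemma}, any range-compatible homomorphism $F$ on $\Mata_3(\K)$ decomposes as $F = (F_1,F_2,F_3)^T$, where each $F_i$ is a group homomorphism of the $i$-th row of its argument; in our parameterization, $F_1$, $F_2$ and $F_3$ may thus be viewed as group homomorphisms from $\K^2$ to $\K$ in the variables $(a,b)$, $(a,c)$ and $(b,c)$ respectively. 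Range-compatibility then collapses to the single functional identity
$$c\, F_1(a,b) - b\, F_2(a,c) + a\, F_3(b,c) = 0 \quad \text{for all } (a,b,c) \in \K^3.$$

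Next, I will specialize this identity so as to force each $F_i$ to be $\K$-linear on each coordinate axis. For instance, setting $b=0$ and $c=1$ yields $F_1(a,0) = -a\, F_3(0,1)$ for all $a \in \K$, so $F_1$ restricted to the first axis has the form $a \mapsto \nu a$ with $\nu := -F_3(0,1) \in \K$; five symmetric specializations show that all six axial restrictions of $F_1, F_2, F_3$ are governed by three scalars $\mu, \nu, \rho \in \K$ via
$$F_1(a,0) = \nu a, \; F_1(0,b) = \rho b, \; F_2(a,0) = \mu a, \; F_2(0,c) = \rho c, \; F_3(b,0) = \mu b, \; F_3(0,c) = -\nu c.$$
Since each $F_i$ is additive on $\K^2$, it equals the sum of its restrictions to the two coordinate axes, so $F_1(a,b) = \nu a + \rho b$, $F_2(a,c) = \mu a + \rho c$ and $F_3(b,c) = \mu b - \nu c$. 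Setting $v := (-\mu, \nu, \rho)^T$ and multiplying out, I will verify that $F(A) = A v$ for every $A \in \Mata_3(\K)$, which is precisely locality.

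I anticipate no genuine obstacle: the argument is uniform over an arbitrary field, no case distinction on characteristic or cardinality is needed, and the $\K$-linearity of each $F_i$ emerges for free from additivity combined with the axial identities. The only modest care required is to track the signs coming from the alternating constraint $A_{ji}=-A_{ij}$; once that bookkeeping is done, the final check that $F(A)=Av$ reduces to a trivial polynomial identity.
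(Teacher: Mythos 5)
Your proof is correct and follows essentially the same route as the paper: Corollary \ref{rowlemma} to decompose $F$ into row homomorphisms, the observation that the range of a nonzero alternating $3\times 3$ matrix is the orthogonal complement of its one-dimensional kernel, and axial specializations of the resulting functional identity to force linearity and identify the vector $v$. The only cosmetic difference is that the paper first splits each row homomorphism into one-variable endomorphisms of $\K$ before proving them linear, whereas you work with the two-variable row homomorphisms directly and reconstruct them from their axial restrictions by additivity.
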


\begin{proof}[Proof of Lemma \ref{dim3lemmar=0}]
Let $F : \Mata_3(\K) \rightarrow \K^3$ be a range-compatible homomorphism.
By Corollary \ref{rowlemma}, there are group endomorphisms $u,g,h,i,j,k$ of $\K$ such that
$$F : \begin{bmatrix}
0 & -a & b \\
a & 0 & -c \\
-b & c & 0
\end{bmatrix} \mapsto \begin{bmatrix}
-u(a)+g(b) \\
h(a)-i(c) \\
-j(b)+k(c)
\end{bmatrix}.$$
For all $(a,b,c)\in \K^3 \setminus \{(0,0,0)\}$, the range of the alternating matrix $\begin{bmatrix}
0 & -a & b \\
a & 0 & -c \\
-b & c & 0
\end{bmatrix}$ is the orthogonal complement of its kernel, which is spanned by $\begin{bmatrix}
c \\
b \\
a
\end{bmatrix}$. This leads to
\begin{equation}\label{identitealt}
\forall (a,b,c)\in \K^3, \; -u(a)c+g(b)c+h(a)b-i(c)b-j(b)a+k(c)a=0.
\end{equation}
Fixing $b=0$ and $c=1$ and varying $a$, we obtain that $u$ is linear. Similarly, one shows that
$g,h,i,j,k$ are all linear.
Then, \eqref{identitealt} reads
$$\forall (a,b,c)\in \K^3, \; \bigl(-u(1)+k(1)\bigr)ac+\bigl(g(1)-i(1)\bigr)bc+\bigl(h(1)-j(1)\bigr)ab=0.$$
On the left-hand side of this identity, we have a polynomial of degree at most $1$ in each variable, and hence all its coefficients are zero.
This leads to $u(1)=k(1)$, $g(1)=i(1)$ and $h(1)=j(1)$.
Therefore, $F$ is the local map $A \mapsto AX$ where $X:=\begin{bmatrix}
h(1) \\
u(1) \\
g(1)
\end{bmatrix}$.
\end{proof}

Now, we prove Lemma \ref{dim3lemma}.

\begin{proof}[Proof of Lemma \ref{dim3lemma}]
The case $r=0$ has already been dealt with in Lemma \ref{dim3lemmar=0}.

Assume now that $r \geq 1$. Let $F : \calM_f \rightarrow \K^3$ be a range-compatible homomorphism. \\
Denote by $\calT$ the subspace of $\calM_f$ consisting of its matrices
with all first three columns equal to $0$. Then, we see that $\calT=\{0\} \coprod \calN$ for some
subspace $\calN$ of $\Mat_{3,r}(\K)$ with codimension $1$.
By the splitting lemma and Theorem \ref{maintheogroup}, the restriction of $F$ to $\calT$ must be local.
Thus, subtracting a well-chosen local map from $F$, no generality is lost in assuming that $F$ vanishes everywhere on $\calT$,
a condition that will be assumed to hold throughout the rest of the proof.
It follows that there is a homomorphism
$$G : \Mata_3(\K) \rightarrow \K^3$$
such that, for all $M \in \calM_f$, we have $F(M)=G(K(M))$, where
$$M=\begin{bmatrix}
K(M) & [?]_{3 \times r}
\end{bmatrix} \quad \text{with $K(M) \in \Mata_3(\K)$.}$$
We shall prove that $G$ is range-compatible.
To do so, we distinguish between three cases.

\vskip 3mm
\noindent \textbf{Case a: $r=3$.} \\
Let $A \in \Mata_3(\K) \setminus \{0\}$.
We choose a non-zero column $C$ of $A$.
The subspace $\{C X^T \mid X \in \K^3\}$ of $\Mat_3(\K)$
is not included in $\mathfrak{sl}_3(\K)$, and hence we can find a rank $1$ matrix $B \in \Mat_3(\K) \setminus \mathfrak{sl}_3(\K)$
whose range equals $\K C$. Then, $\Mat_3(\K)=\K B \oplus \mathfrak{sl}_3(\K)$,
and hence we can write $f(A)=\lambda\,B+N$ for some $\lambda \in \K$ and some $N \in \mathfrak{sl}_3(\K)$.
Thus, $\calM_f$ contains $M:=\begin{bmatrix}
A & \lambda B
\end{bmatrix}$, whose range equals $\im A$. Then, $G(A)=F(M) \in \im M$, and hence $G(A) \in \im A$.

\vskip 3mm
\noindent \textbf{Case b: $r=2$.} \\
Let $A \in \Mata_3(\K) \setminus \{0\}$.
We prove that there exists a matrix $M \in \calM_f$ such that
$M=\begin{bmatrix}
A & [?]_{3 \times 2}
\end{bmatrix}$ and $\im M=\im A$, which, as in Case a, will entail that $G(A) \in \im A$.

One of the columns of $A$, say the $i$-th, can be written as $\begin{bmatrix}
Y \\
?
\end{bmatrix}$ with $Y \in \K^2 \setminus \{0\}$.
As in Case a, we find $B \in \Mat_2(\K) \setminus \mathfrak{sl}_2(\K)$ with range $\K Y$.
Then, $f(A)=\lambda\,B+N$ for some $\lambda \in \K$ and some $N \in \mathfrak{sl}_2(\K)$.
It follows that we can find a row matrix $L \in \Mat_{1,2}(\K)$ such that
$\begin{bmatrix}
\lambda\,B \\
L
\end{bmatrix}$ has all its columns collinear to the $i$-th column of $A$.
Then, the matrix
$$\begin{bmatrix}
A & [0]_{3 \times 2}
\end{bmatrix}+\begin{bmatrix}
[0]_{2 \times 3} & \lambda\,B \\
[0]_{1 \times 3} & L
\end{bmatrix}$$
belongs to $\calM_f$ and its range equals $\im A$, which proves our claim.

\vskip 3mm
\noindent \textbf{Case c: $r=1$.} \\
Using Corollary \ref{rowlemma}, we find
endomorphisms $u,g,h,i,j,k,\theta$ of $(\K,+)$ such that
$$F : \begin{bmatrix}
0 & -a & b & ? \\
a & 0 & -c & d \\
-b & c & 0 & e
\end{bmatrix} \mapsto \begin{bmatrix}
-u(a)+g(b)+\theta(c) \\
h(a)-i(c) \\
-j(b)+k(c)
\end{bmatrix}.$$
Let $A \in \Mata_3(\K)$ be with non-zero first row.
Again, we see that there exists some $M \in \calM_f$ of the form $M=\begin{bmatrix}
A & [?]_{3 \times 1}
\end{bmatrix}$ such that $\im M=\im A$: indeed, the $2$-dimensional affine subspaces
$\begin{bmatrix}
f(A) \\
0 \\
0
\end{bmatrix}+(\{0\} \times \K^2)$ and $\im A$ of $\K^3$ are non-parallel, and hence
they have a common point. Thus,
$G(A)=F(M) \in \im A$.

As in the proof of Lemma \ref{dim3lemmar=0}, this yields
$$\forall (a,b,c)\in \K^3, \; (a,b) \neq (0,0) \Rightarrow
-u(a)c+g(b)c+h(a)b-i(c)b-j(b)a+k(c)a+\theta(c)c=0.$$
Next, we prove that $\theta=0$. We distinguish between two cases:
\begin{itemize}
\item Assume that $\# \K>2$. Let $c \in \K \setminus \{0\}$.
Taking $b=0$, we find that $-u(a)c+k(c)a+\theta(c)c=0$ for all $a \in \K \setminus \{0\}$.
Thus, the group endomorphism $\chi : a \in \K \mapsto -u(a)c+k(c)a$ is constant on $\K \setminus \{0\}$
with sole value $-\theta(c)c$. However, since $\K$ has more than $2$ elements, we can find
$x$ and $y$ in $\K \setminus \{0\}$ such that $x+y \neq 0$, which leads to $\chi(x)+\chi(y)=\chi(x+y)$, and hence
$\theta(c)c=0$. Therefore, $\theta(c)=0$.

\item Assume that $\# \K=2$. Then, $u,g,h,i,j,k,\theta$ are all linear, and hence the quadratic form
$$(a,b,c) \mapsto (-u(1)+k(1))ac+(g(1)-i(1))bc+(h(1)-j(1))ab+\theta(1) c^2=0$$
vanishes everywhere on $\K^3 \setminus \Vect\bigl((0,0,1)\bigr)$.
By Lemma 5.2 of \cite{dSPRC1}, $q=0$, and in particular $\theta(1)=0$.
\end{itemize}
Therefore, in any case $\theta=0$. Moreover, if $(a,b)=(0,0)$ it is obvious that
$$-u(a)c+g(b)c+h(a)b-i(c)b-j(b)a+k(c)a=0.$$
Hence,
$$\forall (a,b,c)\in \K^3, \; -u(a)c+g(b)c+h(a)b-i(c)b-j(b)a+k(c)a=0,$$
which shows that $G$ is range-compatible (see the proof of Lemma \ref{dim3lemmar=0}).

\vskip 3mm
Thus, in any case we have shown that $G$ is range-compatible. Lemma \ref{dim3lemmar=0} then yields
a vector $Y \in \K^3$ such that $G : A \mapsto AY$, and we conclude that $F$ is the local map $M \mapsto MX$ for
$X:=\begin{bmatrix}
Y \\
[0]_{r \times 1}
\end{bmatrix}$.
\end{proof}

This completes the proof in the case when $\dim V=3$.

\subsection{The case $\dim V>3$}\label{altinduction}

Here, we perform the inductive step of the proof of Theorem \ref{generalizedalttheo}. Set $n:=\dim V$, $p:=\dim U$ and assume that $\dim V>3$.

\begin{step}
There exist non-collinear forms $f_1^\star$ and $f_2^\star$ in $V^\star$ such that
$\codim (\calS \modu f_i^\star) \leq n-3$ and $\codim (\calS \modu f_i^\star)_r \leq n-4$ for all
$i \in \{1,2\}$.
\end{step}

\begin{proof}
Let $f^\star \in V^\star \setminus \{0\}$.
Denote by $\calS_r^{f^\star}$ the subspace of all the operators $s \in \calS_r$ such that
$(x,y) \mapsto s(x)[y]$ vanishes everywhere on $\Ker f^\star \times \Ker f^\star$.
Then, by the rank theorem, we have
$$\codim (\calS \modu f^\star)_r =\codim \calS_r +\bigl(\dim \calS_r^{f^\star}-\dim V+1\bigr).$$
Thus, in order to have
$$\codim (\calS \modu f^\star)_r \leq n-4,$$
it suffices that $\dim \calS_r^{f^\star} < n-1$.
Assume that there are $n-1$ linearly independent vectors $f_1^\star,\dots,f_{n-1}^\star$ in $V^\star$
such that $\dim \calS_r^{f_i^\star}=\dim V-1$ for all $i \in \lcro 1,n\rcro$.
Then, we extend this family into a basis $\bfB^\star=(f_1^\star,\dots,f_{n-1}^\star,f_n^\star)$ of $V^\star$, whose corresponding
basis of $V$ is denoted by $\bfB$. Denote by $\calM$ the matrix space associated with $\calS_r$ in the bases
$\bfB$ and $\bfB^\star$. The assumptions show that, for all $(i,j)\in \lcro 1,n\rcro^2$ such that $i>j$, the space $\calM$
contains $E_{i,j}-E_{j,i}$ (since $\dim \calS_r^{f_j^\star}=\dim V-1$).
By linearly combining those matrices, we find that $\calM=\Mata_n(\K)$, which means that
$\calS_r=\calL_a(V,V^\star)$. In turn, this shows that
for all $f^\star \in V^\star \setminus \{0\}$, we have $(\calS \modu f^\star)_r=\calL_a\bigl(\Ker f^\star,(\Ker f^\star)^\star\bigr)$ and hence
$$\codim (\calS \modu f^\star)_r=0 \leq n-4.$$
Thus, in any case, we have found a linear subspace $P$ of codimension $2$ in $V^\star$
such that
$$\forall f^\star \in V^\star \setminus P, \; \codim (\calS \modu f^\star)_r \leq n-4.$$

Let $f \in V^\star \setminus \{0\}$.
If we denote by $\calS_m^{f^\star}$ the space of all linear maps
$s \in \calS_m$ such that $\im s \subset \K f^\star$, the rank theorem yields
$$\codim (\calS \modu f^\star) = \codim \calS +\bigl(\dim(\calS_m^{f^\star})-\dim (U/V)\bigr).$$
Indeed, the rank of an alternating linear map from $V$ to $V^\star$
cannot equal $1$, and hence if $s \in \calS$ satisfies $\im s \subset \K f^\star$ then $s$ vanishes everywhere on $V$,
to the effect that $s \in \calS_m$.

Thus, in order to have
$$\codim (\calS \modu f^\star) \leq n-3,$$
it suffices that $\dim \calS_m^{f^\star} <\dim (U/V)$.
If there existed a basis $(f_1^\star,\dots,f_n^\star)$ of $V^\star$ such that
$\dim \calS_m^{f_i^\star}=\dim (U/V)$ for all $i \in \lcro 1,n\rcro$, then it would follow that
$\calS_m=\calL(U/V,V^\star)$, and hence
$$\codim \calS=\codim \calS_r \leq n-3.$$
In that case, we would obtain
$$\codim (\calS \modu f^\star) \leq \codim \calS \leq n-3$$
whatever the choice of $f^\star$ in $V^\star \setminus \{0\}$.

This yields a linear hyperplane $H$ of $V^\star$ such that
$$\forall f^\star \in V^\star \setminus H, \; \codim (\calS \modu f^\star) \leq n-3.$$
Hence, for every $f^\star \in V^\star \setminus (P \cup H)$, the space $\calS \modu f^\star$ satisfies the conditions of Theorem \ref{generalizedalttheo}.
However, the union of two proper linear subspaces of $V^\star$ is always a proper subset of $V^\star$:
this successively yields some $f_1^\star \in V \setminus (P \cup H)$, and then we can pick $f_2^\star$ in $V \setminus ((P+\K f_1^\star) \cup H)$,
so that $f_1^\star$ and $f_2^\star$ are linearly independent and both spaces
$\calS \modu f_1^\star$ and $\calS \modu f_2^\star$ satisfy the requirements of Theorem \ref{generalizedalttheo}.
\end{proof}

By induction, we obtain that both range-compatible homomorphisms $F \modu f_1^\star$ and $F \modu f_2^\star$
are local, yielding vectors $x_1$ and $x_2$ in $U$ such that
$$\forall s \in \calS, \quad F(s)=s(x_1) \mod \K f_1^\star \quad \text{and} \quad F(s)=s(x_2) \mod \K f_2^\star.$$
Replacing $F$ with $s \mapsto F(s)-s(x_1)$, we reduce the situation further to the point where there exists $x \in U$ such that
\begin{equation}\label{localcond}
\forall s \in \calS, \; F(s) \in \K f_1^\star \quad \text{and} \quad F(s)=s(x) \mod \K f_2^\star.
\end{equation}
Notice then that
$$\forall s \in \calS, \quad s(x) \in \Vect(f_1^\star,f_2^\star).$$

\begin{step}
The vector $x$ belongs to $V$.
\end{step}

\begin{proof}
Assume on the contrary that $x\not\in V$.
Let us extend $(f_1^\star,f_2^\star)$ into a basis
$\bfB^\star$ of $V^\star$, whose predual basis of $V$ is denoted by $\bfB=(e_1,\dots,e_n)$, and
let us extend $(e_1,\dots,e_n,x)$ into a basis $\widetilde{\bfB}$ of $U$.
Denote by $\calM$ the matrix space representing $\calS$ in $\widetilde{\bfB}$ and $\bfB^\star$.
Then, if we denote by $P$ the subspace $\K^2 \times \{0\}$ of $\K^n$, we see that
$\calM$ is a linear subspace of $\Mata_n(\K) \coprod P \coprod \Mat_{n,p-n-1}(\K)$, where $p:=\dim U$.
However, that subspace has codimension $n-2$ in $\Mata_n(\K) \coprod \Mat_{n,p-n}(\K)$, whereas
$\calM$ has codimension at most $n-2$ in it. Therefore,
$\calM=\Mata_n(\K) \coprod P \coprod \Mat_{n,p-n-1}(\K)$.

Denote by $G : \calM \rightarrow \K^n$ the range-compatible homomorphism that is associated with $F$ in the above bases.
Condition \eqref{localcond} shows that $G$ maps every matrix $M=(m_{i,j})$ of $\calM$ to $\begin{bmatrix}
m_{1,n+1}\\
[0]_{(n-1) \times 1}
\end{bmatrix}$.
Then, we have a range-compatible homomorphism $H : P \rightarrow \K^n$ such that, for all $Y \in P$,
$$G\Bigl(\begin{bmatrix}
[0]_{n \times n} & Y & [0]_{n \times (p-n-1)}
\end{bmatrix}\Bigr) =H(Y),$$
and we see from the above form of $G$ that $\im H=\K \times \{0\}$. However, by Theorem \ref{maintheogroup}
the map $H$ should be a scalar multiple of the identity, which is false since $P$ has dimension $2$.
\end{proof}

\begin{step}
The vector $x$ belongs to $\Ker f_1^\star \cap \Ker f_2^\star$.
\end{step}

\begin{proof}
Assume on the contrary that $x \not\in {}^o \Vect(f_1^\star,f_2^\star)$.
Then, we can find a basis $(g_1^\star,g_2^\star)$ of $\Vect(f_1^\star,f_2^\star)$ such that $g_1^\star(x)=1$ and $g_2^\star(x)=0$.
We extend this basis into a basis $\bfB^\star=(g_1^\star,g_2^\star,\dots,g_n^\star)$ of $V^\star$ in which $g_i^\star(x)=0$ for all $i \in \lcro 2,n\rcro$,
and we consider the corresponding basis $\bfB$ of $V$.
Obviously, $x$ is the first vector of $\bfB$. It then turns out that if we denote by $\calM_r$ the matrix space associated with
$\calS_r$ in the bases $\bfB$ and $\bfB^\star$, this space is included in $\Mata_n(\K)$
and every matrix in $\calM_r$ has its first column of the form
$\begin{bmatrix}
0 \\
? \\
[0]_{(n-2) \times 1}
\end{bmatrix}$. Thus, we see that $\calM_r$ has codimension at least $n-2$ in $\Mata_n(\K)$,
contradicting the assumption that $\codim \calS_r \leq n-3$.
\end{proof}

\begin{step}
One has $x=0$.
\end{step}

\begin{proof}
Assume on the contrary that $x \neq 0$.
Let us consider a linear form $f_0^\star$ on $V$ such that $f_0(x)=1$.
Then, we extend $(f_0^\star,f_1^\star,f_2^\star)$ into a basis $\bfB^*$ in which the last $n-3$ linear forms
annihilate $x$, and we denote by $\bfB$ the corresponding basis of $V$, which we extend into a basis $\widetilde{\bfB}$ of $U$.
Note that $x$ is the first vector of $\bfB$, and hence also the first one of  $\widetilde{\bfB}$.
Let us denote by $\calM$ the matrix space associated with $\calS$ in those bases.
For $M \in \calM$ we write
$$M=\begin{bmatrix}
A(M) & B(M)
\end{bmatrix} \quad \text{with $A(M)\in \Mata_n(\K)$ and $B(M) \in \Mat_{n,p-n}(\K)$.}$$
Then, we know that the first column of every matrix of $\calM$
has the form $\begin{bmatrix}
0 \\
? \\
? \\
[0]_{(n-3) \times 1}
\end{bmatrix}$.

Denote by $G : \calM \rightarrow \K^n$ the range-compatible homomorphism that corresponds to $F$ in the above bases.
Then, $G$ maps every matrix $(m_{i,j})$ of $\calM$ to the vector $\begin{bmatrix}
0 \\
m_{2,1} \\
[0]_{(n-2) \times 1}
\end{bmatrix}$.

Denote by $\calN$ the linear subspace of $\Mata_n(\K)$ consisting of its matrices $(m_{i,j})$ such that
$m_{i,1}=0$ for all $i \in \lcro 4,n\rcro$.
Then, $\codim \calN=n-3$. However, $\codim A(\calM)=\codim \calS_r \leq n-3$, whence
$A(\calM)=\calN$ and $\codim \calS_r=n-3$.
It follows that we can find a linear mapping
$$K : \Mata_3(\K) \longrightarrow \Mat_{3,p-n}(\K)$$
such that, for all $C \in \Mata_3(\K)$, the space $\calM$ contains a matrix of the form
$$\begin{bmatrix}
C & [0]_{3 \times (n-3)} & K(C) \\
[0]_{(n-3) \times 3} & [0]_{(n-3) \times (n-3)} & [?]_{(n-3) \times (p-n)}
\end{bmatrix}.$$
Now, denote by $\calH$ the subspace of all matrices $M \in \calM$ such that $A(M)=0$.
Then $\codim B(\calH)+\codim \calS_r=\codim \calS$, whence $\codim B(\calH) \leq 1$.
Thus, $B(\calH)$ includes a linear hyperplane of $\Mat_{n,p-n}(\K)$.

For $M \in \calH$, denote by $B'(M)$ the sub-matrix of $B(M)$ obtained by deleting all the rows starting from the fourth one,
so that $B'(\calH)$ is a linear subspace of $\Mat_{3,p-n}(\K)$ with codimension at most $1$.
Thus, for all $C \in \Mata_3(\K)$ and $N' \in B'(\calH)$, the space
$\calM$ contains a matrix of the form
$$\begin{bmatrix}
C & [0]_{3 \times (n-3)} & K(C)+N' \\
[0]_{(n-3) \times 3} & [0]_{(n-3) \times (n-3)} & [?]_{(n-3) \times (p-n)}
\end{bmatrix}.$$
Let us finally denote by $\calM'$ the space of all matrices
$$\begin{bmatrix}
C & K(C)+N'
\end{bmatrix} \quad \text{with $C \in \Mata_3(\K)$ and $N' \in B'(\calH)$.}$$

Applying Lemma \ref{quotientgeneral} to the subspace $V_0=\{0\} \times \K^{n-3}$ of $\K^n$,
we obtain that
$$G' : (m_{i,j}) \in \calM' \mapsto \begin{bmatrix}
0 \\
m_{2,1} \\
0
\end{bmatrix}$$
is a range-compatible homomorphism. On the other hand, $\calM'$ represents a
space of alternating operators that satisfies the assumptions of Theorem \ref{generalizedalttheo}.
From the $3$-dimensional case treated in Section \ref{altdim3}, we deduce that $G'$ is a local map.
This would yield a vector $X \in \K^{3+p-n}$ such that $\dim \calM' X =1$.
However, if $X \in \K^3 \times \{0\}$ we would have $\dim \calM' X=2$ or $\dim \calM' X=0$;
on the other hand, if $X \not\in (\K^3 \times \{0\})$, we see that $\dim \calM' X \geq 2$
because $B'(\calH)$ has codimension at most $1$ in $\Mat_{3,p-n}(\K)$.
Thus, we have a contradiction in any case, and we conclude that $x=0$.
\end{proof}

We conclude that, for all $s \in \calS$, the vector $F(s)$ belongs to both $\K f_1^\star$ and $\K f_2^\star$,
and hence it equals $0$. Thus, $F$ is the local map $s \mapsto s(0)$, which completes the inductive step.
Thus, Theorem \ref{generalizedalttheo} is now established, and Theorem \ref{alttheo} follows as it is
a reformulation of the special case when $U=V$.


\begin{thebibliography}{1}
\bibitem{AtkLloyd}
M. D. Atkinson, S. Lloyd,
{Large spaces of matrices of bounded rank.}
Quart. J. Math. Oxford (2)
{\bf 31} (1980), 253--262.

\bibitem{Dieudonne}
J. Dieudonn\'e,
{Sur une g\'en\'eralisation du groupe orthogonal \`a quatre variables.}
Arch. Math.
{\bf 1} (1948), 282--287.

\bibitem{Loewy}
R. Loewy,
{Large spaces of symmetric matrices of bounded rank are decomposable.}
Linear Multilinear Algebra
{\bf 48} (2001), 355--382.

\bibitem{LoewyRadwan}
R. Loewy, N. Radwan,
{Spaces of symmetric matrices of bounded rank.}
Linear Algebra Appl.
{\bf 197-198} (1994), 189--215.

\bibitem{dSPRC1}
C. de Seguins Pazzis,
{Range-compatible homomorphisms on matrix spaces.}
 Linear Algebra Appl.
{\bf 484} (2015) 237--289.

\bibitem{dSPRC2}
C. de Seguins Pazzis,
{Range-compatible homomorphisms over the field with two elements.}
Preprint, arXiv: http://arxiv.org/abs/1407.4077

\bibitem{dSPRC3}
C. de Seguins Pazzis,
{Quasi-range-compatible affine maps on large operator spaces.}
Linear Multilinear Algebra
{\bf 64} (2016) 1056--1085.

\bibitem{dSPclass}
C. de Seguins Pazzis,
{The classification of large spaces of matrices with bounded rank.}
Israel J. Math.
{\bf 208} (2015) 219--259

\bibitem{dSPlargelinpres}
C. de Seguins Pazzis,
{The linear preservers of non-singularity in a large space of matrices.}
Linear Algebra Appl.
{\bf 436} (2012), 3507--3530.

\end{thebibliography}
\end{document}